\newcounter{commentcounter}
\newcommand{\showcomments}{yes}
\newsavebox{\commentbox}
\newenvironment{com}%
{\ifthenelse{\equal{\showcomments}{yes}}%
{\footnotemark
        \begin{lrbox}{\commentbox}
        \begin{minipage}[t]{1.25in}\raggedright\sffamily\tiny
        \footnotemark[\arabic{footnote}]}
{\begin{lrbox}{\commentbox}}}
{\ifthenelse{\equal{\showcomments}{yes}}
{\end{minipage}\end{lrbox}\marginpar{\usebox{\commentbox}}}
{\end{lrbox}}}
\title{A Variant of Roe Algebras for Spaces with Cylindrical Ends with Applications in Relative Higher Index Theory}
\author{Mehran Seyedhosseini}
\newtheorem{theorem}{Theorem}[section]
\newtheorem{proposition}[theorem]{Proposition}
\newtheorem{lemma}[theorem]{Lemma}
\theoremstyle{definition}
\newtheorem{definition}[theorem]{Definition}
\theoremstyle{definition}
\newtheorem{remark}[theorem]{Remark}
\newtheorem{corollary}[theorem]{Corollary}
\DeclareMathOperator{\supp}{supp}
\DeclareMathOperator{\prop}{prop}
\DeclareMathOperator{\kernel}{ker}
\DeclareMathOperator{\ind}{ind}
\DeclareMathOperator{\red}{red}
\DeclareMathOperator{\ad}{Ad}
\DeclareMathOperator{\ev}{ev}
\DeclareMathOperator{\Ind}{Ind}
\DeclareMathOperator{\dist}{dist}
\DeclareMathOperator{\iden}{id}
\DeclareMathOperator{\rel}{rel}
\DeclareMathOperator{\st}{st}
\DeclareMathOperator{\Support}{Support}
\DeclareMathOperator{\Id}{Id}
\DeclareMathOperator{\Cl}{Cl}
\DeclareMathOperator{\Spin}{Spin}
\DeclareMathOperator{\Hom}{Hom}
\newcommand{\reals}{\mathbb{R}}
\renewcommand{\restriction}{\mathord{\upharpoonright}}
\date{}
\begin{document}

\maketitle

\begin{abstract}
In this paper we define a variant of Roe algebras for spaces with cylindrical ends and use this to study questions regarding existence and classification of metrics of positive scalar curvature on such manifolds which are collared on the cylindrical end. We discuss how our constructions are related to relative higher index theory as developed by Chang, Weinberger, and Yu and use this relationship to define higher rho-invariants for positive scalar curvature metrics on manifolds with boundary. This paves the way for classification of these metrics. Finally, we use the machinery developed here to give a concise proof of a result of Schick and the author, which relates the relative higher index with indices defined in the presence of positive scalar curvature on the boundary.  
\end{abstract}

\section{Introduction}
The question of whether a given manifold admits a metric of positive scalar curvature has spurred much activity in recent years. One of the main approaches to partially answer this question is index theory. On a closed spin manifold $M$ the Schrödinger-Lichnerowicz formula implies that the nonvanishing of the Fredholm index of the Dirac operator is an obstruction to the existence of positive scalar curvature metric. However, this does not tell the whole story, since there exist spin manifolds with vanishing Fredholm index of the Dirac operator, which however do not admit metrics with positive scalar curvature. One way to obtain more refined invariants from the Dirac operator is to not only consider the dimensions of its kernel and cokernel, but also to consider the action of the fundamental group on them. This gives rise to a higher index for the Dirac operator which is an element of the $K$-theory of the group $C^*$-algebra of the fundamental group. In general, one can associate a class in the $K$-homology of the manifold to the spin Dirac operator and the higher index is obtained as the image of this class under the index map $$
\mu^{\pi_1(M)}:K_*(M) \rightarrow K_*(C^*(\pi_1(M))).$$
The nonvanishing of the higher index gives an obstruction to the existence of positive scalar curvature metrics. In order to prove this one can use the fact that the index map fits in the Higson-Roe exact sequence
$$\hdots \rightarrow S_*^{\pi_1(M)}(M) \rightarrow K_*(M) \rightarrow K_*(C^*(\pi_1(M))) \rightarrow \hdots$$ and that the positivity of the scalar curvature allows the definition of a lift of the fundamental class in $S^{\pi_1(M)}_*(M)$. Given two positive scalar curvature metrics on $M$, one can also define an index difference in $K_{*+1}(C^*(\pi_1(M)))$. These secondary invariants can then also be used for classification of positive scalar curvature metrics up to concordance and bordism. More concretely, in \cite{XY} and \cite {XYZ} the authors use these invariants to prove concrete results on the size of the space of positive scalar metrics on closed manifolds. 

In \cite{CWY} Chang, Weinberger and Yu recently considered the question on compact spin manifolds with boundary. Let $M$ be a compact spin manifold with boundary $N$. They constructed a relative index map 
$$\mu^{\pi_1(M),\pi_1(N)} : K_*(M,N) \rightarrow K_*(C^*(\pi_1(M),\pi_1(N))),$$  where $K_*(M,N)$ and $C^*(\pi_1(M),\pi_1(N))$ denote the relative $K$-homology group and the so-called relative group $C^*$-algebra. One can define a relative class for the Dirac operator on $M$ in the relative $K$-homology group. The relative index is then the image of the latter relative class under the relative index map. Given a positive scalar curvature metric on $M$ which is collared at the boundary, it was shown in \cite{CWY} that the relative index vanishes. A general Riemannian metric which is collared at the boundary and has positive scalar curvature there, also defines an index in $K_*(C^*(\pi_1(M)))$, which vanishes if the metric has positive scalar curvature everywhere. It was shown in \cite{DG} and \cite{SS18} that the latter index maps to the relative index under a certain group homomorphism. Apart from relating previously defined indices to the relative index, this fact also gives a conceptual proof that the relative index is an obstruction to the existence of positive scalar curvature metrics which are collared at the boundary.

The relative index map fits into an exact sequence
$$\hdots \rightarrow S^{\pi_1(M),\pi_1(N)}_*(M,N) \rightarrow K_*(M,N) \rightarrow K_*(C^*(\pi_1(M),\pi_1(N))) \rightarrow \hdots,$$
where $S^{\pi_1(M),\pi_1(N)}_*(M,N)$ is the relative analytic structure group and has different realisations. The main aim of this paper is to answer the following natural question: given a positive scalar curvature metric, which is collared at the boundary, can one define explicitly a secondary invariant in $S^{\pi_1(M),\pi_1(N)}_*(M,N)$ which lifts the relative fundamental class and is useful for classification purposes? Here, note that the exactness of the above sequence immediately implies the existence of some lift or lifts which, however, do not necessarily give us any information about the psc metric at hand. Using the machinery we develop in this paper, we will be able to answer the latter question in the positive. Furthermore, the same machinery allows us to define a higher index difference associated to positive scalar curvature metrics on manifolds with boundary. The definition of such secondary invariants paves the way for generalisations of the known results, such as those of \cite{XY} and \cite {XYZ}, on the size of the space of positive scalar curvature metrics to manifolds with boundary.

Closely related to the question of existence and classification of positive scalar curvature metrics on manifolds with boundary which are collared at the boundary, is the question of existence and classification of positive scalar curvature metrics on manifolds with cylindrical ends, which are collared on the cylindrical end. The usual coarse geometric approach to index theory cannot be applied in this case, since the Roe algebras of spaces with cylindrical ends tend to have vanishing $K$-theory. We deal with this problem by introducing a variant of Roe algebras for such spaces with more interesting $K$-theory. The operators in the new Roe algebras are required to be asymptotically invariant in the cylindrical direction. Such operators can then be evaluated at infinity in a sense to be described later. Let $X$ be a space with cylindrical end and denote by $Y_\infty$ its cylindrical end. Let $\Lambda$ and $\Gamma$ be discrete groups and $\varphi: \Lambda \rightarrow \Gamma$ a group homomorphism. $\varphi$ then induces a map $B\Lambda \rightarrow B\Gamma$ of the classifying spaces of the groups which we can assume to be injective. Given a map $(X,Y_\infty) \rightarrow (B\Gamma,B\Lambda)$ of pairs we construct a long exact sequence
$$\cdots \rightarrow K_*(C^*_{L,0}(\widetilde{X})^{\Gamma,\reals_+,\Lambda}) \rightarrow K_*(C^*_{L}(\widetilde{X})^{\Gamma,\reals_+,\Lambda}) \rightarrow K_*(C^*(\widetilde{X})^{\Gamma,\reals_+,\Lambda}) \rightarrow \cdots.$$
In the above sequence $\widetilde{X}$ denotes the $\Gamma$-cover of $X$ associated to the map $X \rightarrow B\Gamma$ and $C^*(\widetilde{X})^{\Gamma,\reals_+,\Lambda}$ consists, roughly, of operators on $\widetilde{X}$ which are asymptotically invariant and whose evaluation at infinity results in operators admitting $\Lambda$-invariant lifts. For a spin manifold $X$ we associate a fundamental class to the Dirac operator in $K_*(C^*_{L}(\widetilde{X})^{\Gamma,\reals_+,\Lambda})$. The index of the Dirac operator on the manifold with cylindrical end is then defined as the image of the latter class under the map $K_*(C^*_{L}(\widetilde{X})^{\Gamma,\reals_+,\Lambda}) \rightarrow K_*(C^*(\widetilde{X})^{\Gamma,\reals_+,\Lambda})$. Given a positive scalar curvature metric on $X$ which is collared on $Y_\infty$, we define a lift of the fundamental class in $K_*(C^*_{L,0}(\widetilde{X})^{\Gamma,\reals_+,\Lambda})$, which proves that the nonvanishing of the new index is an obstruction to the existence of positive scalar metrics on $X$ and paves the way for classification of such metrics. By removing $Y_\infty$ we obtain a manifold with boundary, which we denote by $\overline{X}$. We prove that there is a commutative diagram of exact sequences 
$$\begin{tikzcd}
   K_*(C^*_{L,0}(\widetilde{X})^{\Gamma,\reals_+,\Lambda}) \arrow{r} \arrow{d} & K_*(C^*_{L}(\widetilde{X})^{\Gamma,\reals_+,\Lambda}) \arrow{r} \arrow{d} & K_*(C^*(\widetilde{X})^{\Gamma,\reals_+,\Lambda}) \arrow{d}\\
   S^{\Gamma,\Lambda}_*(\overline{X},\partial \overline{X}) \arrow{r} & K_*(\overline{X},\partial \overline{X}) \arrow{r} & K_*(C^*(\Gamma,\Lambda)),
\end{tikzcd}$$
where the lower sequence is the relative Higson-Roe sequence mentioned above. Furthermore, we show that the fundamental class of $\widetilde{X}$ maps to the relative fundamental class under the middle vertical map. This shows that the relative index can be obtained from the new index defined in $K_*(C^*(\widetilde{X})^{\Gamma,\reals_+,\Lambda})$ and allows us to define secondary invariants in $S^{\Gamma,\Lambda}_*(\overline{X},\partial\overline{X})$.

As another application of the machinery developed here we give a short proof the main statement of \cite{SS18}.

The paper is organised as follows. The second section is a very short reminder of the picture of $K$-theory for graded $C^*$-algebras due to Trout. In the third section we recall basic notions from coarse geometry and the coarse geometric approach to index theory on manifolds with and without boundary. In the fourth section we introduce variants of Roe algebras for spaces with cylindrical ends and cylinders and define the evaluation at infinity map, which plays an important role in the rest of the paper. In the final sections, we define indices for Dirac operators on manifolds with cylindrical ends and discuss applications to the existence and classification problem for metrics with positive scalar curvature on such manifolds. This is followed by a discussion of the relationship with the relative index for manifolds with boundary and a short proof of a statement on the relationship between the relative index and indices defined in the presence of a positive scalar curvature metric on the boundary. 
\medskip

\noindent\textbf{Acknowledgement}.
I am grateful to Thomas Schick for many inspiring discussions. I would also like to thank Vito Felice Zenobi and the anonymous referee for their comments on earlier drafts of this paper, which helped to improve its presentation.
\section{K-theory for Graded $C^*$-algebras}
In this paper we will use the approach of Trout to $K$-theory of graded $C^*$-algebras. This description of $K$-theory was used by Zeidler in \cite{RZA}, where he proves product formulas for secondary invariants associated to positive scalar curvature metrics. We quickly recall the basics, and refer the reader to \cite{RZA}*{Section 2} for more details.

Let $H$ be a Real $\mathbb{Z}_2$-graded Hilbert space and denote by $\mathbb{K}$ the Real $C^*$-algebra of compact operators on $H$. The $\mathbb{Z}_2$-grading on $H$ induces a $\mathbb{Z}_2$ grading on $\mathbb{K}$ by declaring the even and odd parts to be the set of operators preserving and exchanging the parity of vectors respectively. The Clifford algebra $\Cl_{n,m}$ will be the $C^*$-algebra generated by $\{e_1, \hdots , e_n,\epsilon_1, \hdots , \epsilon_m\}$ subject to the relations $e_ie_j + e_je_i = -2\delta_{ij}, \epsilon_i\epsilon_j + \epsilon_j\epsilon_i = 2\delta_{ij}, e_i\epsilon_j +\epsilon_je_i = 0, e_i^* = -e_i$ and $\epsilon_i^*=\epsilon_i$. The Real structure and the $\mathbb{Z}_2$-grading of $\Cl_{n,m}$ are defined by declaring these generators to be real and odd. Denote by $\mathcal{S}$ the $C^*$-algebra $C_0(\mathbb{R})$ endowed with a Real structure given by complex conjugation and a $\mathbb{Z}_2$-grading defined by declaring the odd and even parts to be the set of odd and even functions. Given Real, $\mathbb{Z}_2$-graded $C^*$-algebras $A$ and $B$ denote by $\Hom(A,B)$ the space of $C^*$-algebra homomorphisms between $A$ and $B$ respecting the Real structures and the $\mathbb{Z}_2$-gradings, by $[A,B]$ the set $\pi_0(\Hom(A,B))$ and by $A\widehat{\otimes} B$ their maximal graded tensor product. The $n$-th $K$-theory group of the Real graded $C^*$-algebra $A$ is defined to be
$$\widehat{K}_n(A) \coloneqq \pi_n(\Hom(\mathcal{S},A\widehat{\otimes}\mathbb{K}))$$
and turns out to be isomorphic to $[\mathcal{S},\Sigma^nA\widehat{\otimes}\mathbb{K}]$, where $\Sigma^nA$ denotes the $n$-th suspension of $A$. Any Real graded homomorphism of $C^*$-algebras $\varphi: \mathcal{S} \rightarrow A$ gives rise to a class $[\varphi] \coloneqq [\varphi \widehat{\otimes} e_{11}] \in \widehat{K}_0(A)$ with $e_{11}$ some rank one projection.

Denote by $\mathcal{S}(-\epsilon,\epsilon)$ the Real graded $C^*$-subalgebra of $\mathcal{S}$ consisting of functions vanishing outside $(-\epsilon,\epsilon)$. For our discussion of secondary invariants we will make use of the fact that the inclusion $\mathcal{S}(-\epsilon,\epsilon) \rightarrow \mathcal{S}$ is a homotopy equivalence. 

\begin{remark}
	From now on, following \cite{RZA}, we use the notation $K_n(A)$ instead of $\widehat{K}_n(A)$ for the $K$-theory of a graded Real $C^\ast$-algebra $A$. Indeed, if we ignore the Real structure and the grading, $\widehat{K}_n(A)$ coincides with the usual complex $K$-theory of the $C^*$-algebra. If there is a Real structure we obtain the the real $K$-theory of the real part of $A$ (see also the appendix of \cite{RZA}).
\end{remark}

\section{Roe Algebras and the Relative Index Map}
In the following, a metric space $X$ is said to have bounded geometry if
\begin{itemize}
	\item there exist a subset $D$ of $X$ and $c >0$ such that any point of $X$ has distance less than $c$ to some point of $D$ and
	\item for any $r>0$ there exists a natural number $N_r$ such that the cardinality of $D \cap U_r(x)$ is less than $N_r$ for any $x \in X$. Here $U_r(x)$ denotes the open $r$-ball with centre $x$.
\end{itemize}
Throughout this section $X$, $Y$ and $Z$ will denote locally compact metric spaces with bounded geometry.
\subsection{Roe Algebras}
Let $\Gamma$ be a discrete group acting freely and properly on $Z$ by isometries. Pulling back functions along the action gives rise to an action $\alpha: \Gamma \rightarrow \text{Aut}(C_0(Z))$. Let $(\rho, U\colon \Gamma \rightarrow U(H))$ be an ample covariant representation of the $C^*$-dynamical system $(C_0(Z),\Gamma,\alpha)$ on a Hilbert space $H$; i.e., $\rho\colon C_0(Z)\rightarrow L(H)$ is a representation of $C_0(Z)$ on $H$, $U\colon \Gamma \rightarrow U(H)$ is a unitary representation of $\Gamma$ on $H$ and the covariance condition
$$\rho(\alpha_\gamma(f)) = U_\gamma\rho(f)U^*_\gamma$$
is satisfied for all $\gamma \in \Gamma$ and $f\in C_0(Z)$. Here ample means that no non-zero element of $C_0(Z)$ acts as a compact operator. The space $H$ will be referred to as a $Z$-module. We will also make use of $\Cl_n$-linear $Z$-modules which are defined analogously by replacing the Hilbert space $H$ with a Real, graded Hilbert $\Cl_n$-module $\mathfrak{H}$ and by requiring the representation $\rho$ to be by adjointable operators. In the following we will denote $\rho(f)$ simply by $f$.
\begin{definition}
 An operator $T \in L(H)$ is called \textit{locally compact} if for all $f \in C_0(Z)$ both $Tf$ and $fT$ are compact. $T$ is called a \textit{finite propagation operator} if there exists $R > 0$ with the property that $fTg=0$ for all $f,g \in C_0(Z)$ with $\dist(\supp f,\supp g) > R$. The smallest such $R$ is called the \textit{propagation} of $T$ and is denoted by $\prop T$. $T$ is called $\Gamma$-equivariant if $T=U^*_\gamma TU_\gamma$ for all $\gamma \in \Gamma$. Similarly, one defines the notions of local compactness and finite propagation for adjointable operators on $\mathfrak{H}$.
\end{definition}

\begin{definition}
 The \textit{equivariant algebraic Roe algebra} is the $*-algebra$ of locally compact, finite propagation, $\Gamma$-equivariant operators on $H$ and is denoted by $\reals(Z)_{\rho}^\Gamma$.
 The \textit{equivariant Roe algebra} is a $C^*$-completion of the algebraic Roe algebra and is denoted by $C^*_{(d)}(Z)_{\rho}^\Gamma$. Here $(d)$ is a placeholder for the chosen completion. Similarly, one defines the \textit{$\Cl_n$-linear equivariant (algebraic) Roe algebra} by using finite propagation, locally compact and equivariant operators on $\mathfrak{H}$. These algebras will be denoted by $\reals(Z;\Cl_n)_{\rho}^\Gamma$ and $C^*_{(d)}(Z;\Cl_n)_{\rho}^\Gamma$.
\end{definition}
\begin{remark}
It follows from Proposition \ref{prop:funcclasroealg} below that the $K$-theory groups of the Roe algebra are independent of the chosen ample representation. We will therefore drop $\rho$ form the notation.
\end{remark}
\begin{remark}
Examples of possible completions are
 \begin{itemize}
  \item the reduced completion $C^*_{\red}(Z)^\Gamma$; i.e.\ the closure of $\reals(Z)^\Gamma$ in $L(H)$,
  \item the maximal completion $C^*_{\max}(Z)^\Gamma$ obtained by taking the completion using the universal $C^*$-norm and
  \item the quotient completion $C^*_{q}(Z)^\Gamma$ introduced in \cite{SS18}.
 \end{itemize}
 In the following we will denote the Roe algebras obtained by the quotient completion simply by $C^*(Z)^\Gamma$ and $C^*(Z;\Cl_n)^\Gamma$. Most of what will follow will be valid for all of the above completions, however we will state all of our results only for the quotient completion. 
\end{remark} 
Later in the paper we will introduce variants of Roe algebras which are suitable for spaces with cylindrical ends and show that the $K$-theory groups of these algebras define functors on a certain category of spaces. Our proofs of the functoriality of the $K$-theory of the new Roe algebras and their independence from the chosen ample modules makes use of the analogues of these results for the classical Roe algebras. Hence, we quickly recall the latter results in the following. Analogues of the results mentioned below hold for the $\Cl_n$-linear versions of the algebras introduced and we will later make use of them.
\begin{definition}[See \cite{RI}*{Chapter 2}]
Let $X$ and $Y$ be locally compact separable proper metric spaces endowed with a free and proper action of a discrete group $\Gamma$ by isometries. A map $f:X \rightarrow Y$ is called coarse if the inverse image of each bounded set of $Y$ under $f$ is bounded and for each $R > 0$ there exists $S > 0$ such that $d_X(x,x^\prime) < R$ implies $d_Y(f(x),f(x^\prime)) < S$.
\label{def:coarsemap}
\end{definition}
\begin{definition}
Let $X$ and $Y$ be as in Definition \ref{def:coarsemap}. Let $H$ and $H^\prime$ denote an $X$ and $Y$-module respectively. The \textit{support} of an operator $T:H \rightarrow H^\prime$ is the complement of the union of all sets $V\times U \subset Y \times X$ with the property that $fTg = 0$ for all $f\in C_0(V)$ and $g \in C_0(U)$. It will be denoted by $\Support(T)$.
\end{definition}
\begin{definition}
Let $X$ and $Y$ be as in Definition \ref{def:coarsemap}. Let $f:X \rightarrow Y$ be a coarse map. Let $H$ and $H^\prime$ denote an $X$ and $Y$-module respectively. An isometry $V:H \rightarrow H^\prime$ is said to \textit{cover} $f$ if there exists an $R > 0$ such that $d_Y(f(x),y) < R$ for all $(y,x) \in \Support(T)$.
\label{def:covisom}
\end{definition}
\begin{lemma}[\cite{HR}*{Lemma 6.3.11}]
Let $f,X,Y,H$ and $H^\prime$ be as in Definition \ref{def:covisom}. If an isometry $V$ covers $f$, then $T \mapsto VTV^*$ defines a map from $\reals(X)^\Gamma$ to $\reals(Y)^\Gamma$, which extends to a map $C^*(X)^\Gamma \rightarrow C^*(Y)^\Gamma$.
\end{lemma}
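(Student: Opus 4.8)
The plan is to check that $T\mapsto VTV^*$ defines a $*$-homomorphism $\reals(X)^\Gamma\to\reals(Y)^\Gamma$ and then to extend it by continuity. Multiplicativity and $*$-compatibility are immediate from $V^*V=\Id$: for $S,T\in\reals(X)^\Gamma$ we have $V(ST)V^*=VS(V^*V)TV^*=(VSV^*)(VTV^*)$ and $(VTV^*)^*=VT^*V^*$. For the $\Gamma$-equivariance of the image one uses that the covering isometry is taken to intertwine the unitary representations of $\Gamma$ on $H$ and $H'$ (i.e.\ $VU_\gamma=U'_\gamma V$), so that $U'_\gamma(VTV^*)(U'_\gamma)^*=V(U_\gamma T U_\gamma^*)V^*=VTV^*$ whenever $T$ is $\Gamma$-equivariant. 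Thus the real content is that $VTV^*$ has finite propagation and is locally compact, and that the resulting homomorphism is bounded for the chosen $C^*$-norm.

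For finite propagation I would use that, for composable operators between modules, $\Support(AB)$ is contained in the relational composition of $\Support(A)$ and $\Support(B)$; iterating this gives that $\Support(VTV^*)$ lies in the composition $\Support(V)\circ\Support(T)\circ\Support(V^*)$. By the covering condition (Definition~\ref{def:covisom}) there is $R_0>0$ with $\Support(V)\subseteq\{(y,x):d_Y(f(x),y)\le R_0\}$ and $\Support(V^*)$ the transpose relation, while $\prop T\le R$ gives $\Support(T)\subseteq\{(x,x'):d_X(x,x')\le R\}$. Since $f$ is coarse (Definition~\ref{def:coarsemap}) there is $S>0$ with $d_X(x,x')\le R\Rightarrow d_Y(f(x),f(x'))\le S$, and chaining the three estimates with the triangle inequality in $Y$ bounds $\prop(VTV^*)$ by $2R_0+S$.

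For local compactness it suffices, by density of $C_c(Y)$ in $C_0(Y)$ and closedness of the ideal of compact operators, to treat $g\in C_c(Y)$ and show that $gVTV^*$ and $VTV^*g$ are compact. From $\Support(gV)\subseteq\{(y,x):y\in\supp g,\ d_Y(f(x),y)\le R_0\}$, the projection of $\Support(gV)$ to $X$ is contained in $f^{-1}$ of the $R_0$-neighbourhood of $\supp g$; this set is bounded because $f$ is coarse, hence relatively compact because $X$ is proper, so there is $\phi\in C_c(X)$ with $gV\phi=gV$. Then $gVTV^*=gV(\phi T)V^*$ is compact because $\phi T$ is. Arguing the same way with $V^*g^*$ in place of $gV$ yields $\psi\in C_c(X)$ with $\psi V^*g=V^*g$, so $VTV^*g=V(T\psi)V^*g$ is compact. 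This shows $VTV^*\in\reals(Y)^\Gamma$, the $\Cl_n$-linear case being identical with adjointable operators in place of bounded ones.

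Finally, to extend to the $C^*$-completions: since $V$ is an isometry, $\|VTV^*\|_{L(H')}\le\|T\|_{L(H)}$, which handles the reduced completion; for the maximal completion the extension is automatic from the universal property of a $*$-homomorphism of $*$-algebras; and for the quotient completion of \cite{SS18} one checks compatibility with the defining quotient. I expect the only mildly delicate point to be the local-compactness step, namely producing the cutoffs $\phi$ and $\psi$ — this is precisely where properness of the spaces and the fact that the inverse image of a bounded set under the coarse map $f$ is bounded are used; everything else is routine bookkeeping with supports and propagation.
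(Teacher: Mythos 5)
Your proof is correct and follows exactly the standard argument: the paper offers no proof of its own here, simply citing \cite{HR}*{Lemma 6.3.11}, and your support-composition bound $\prop(VTV^*)\le 2R_0+S$ together with the cut-off argument for local compactness (using properness and the fact that $f^{-1}$ of a bounded set is bounded) is precisely the content of that reference. The only point going beyond \cite{HR} is the extension to the quotient completion, which you correctly flag as a compatibility check with the quotients defining the norm of \cite{SS18}.
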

\begin{proposition}[\cite{HR}*{Proposition 6.3.12}]
Let $f,X,Y,H$ and $H^\prime$ be as in Definition \ref{def:covisom}. There exists an isometry which covers $f$ and thus induces a map $K_*(C^*(X)^\Gamma) \rightarrow K_*(C^*(Y)^\Gamma)$. The latter map is independent of the choice of the isometry covering $f$. In particular, the group $K_*(C^*(X)^\Gamma)$ is independent of the choice of the $X$-module up to a canonical isomorphism.
\label{prop:funcclasroealg}
\end{proposition}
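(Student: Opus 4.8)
The plan is to adapt the classical argument to the equivariant setting, carrying the $\Gamma$-action along at every step. There are three things to prove: existence of an isometry covering $f$, the fact that conjugation by it descends to $K$-theory, and independence of the resulting map from the choice of isometry; the final ``in particular'' is then the special case $X=Y$, $f=\iden_X$ with two ample $X$-modules. For existence, I would first choose a $\Gamma$-equivariant Borel cover $\{U_i\}$ of $Y$ of uniformly bounded diameter and bounded multiplicity — one builds such a cover over a fundamental domain and then translates it around, which is where bounded geometry of $Y$ together with freeness and properness of the action enter — then pull it back along $f$ and refine to get a $\Gamma$-equivariant Borel partition $X=\bigsqcup_i X_i$ with each $f(X_i)$ inside a fixed bounded neighbourhood $U_i'$ of $U_i$. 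Ampleness of $H'$ makes each $\chi_{U_i'}H'$ large enough to receive an isometry $V_i\colon \chi_{X_i}H\to \chi_{U_i'}H'$, and these can be chosen compatibly with the $\Gamma$-action on the index set; then $V:=\sum_i V_i$ is a $\Gamma$-equivariant isometry $H\to H'$, and the bounded diameter and multiplicity of the cover force $\Support(V)$ into a bounded neighbourhood of the graph of $f$, so $V$ covers $f$. By the preceding lemma, $\ad(V)\colon T\mapsto VTV^*$ is then a $*$-homomorphism $C^*(X)^\Gamma\to C^*(Y)^\Gamma$, hence induces a map $\ad(V)_*$ on $K$-theory.

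For independence, suppose $V_1,V_2\colon H\to H'$ both cover $f$. View $H'\oplus H'$ as a ($\Gamma$-equivariant, ample) $Y$-module with coordinate inclusions $\iota_1,\iota_2\colon H'\to H'\oplus H'$, each of which covers $\iden_Y$. The key observation is that
$$W_t := \cos(t)\,\iota_1 V_1 + \sin(t)\,\iota_2 V_2, \qquad t\in[0,\pi/2],$$
is a norm-continuous path of isometries $H\to H'\oplus H'$ — each $W_t$ is an isometry because $\iota_1$ and $\iota_2$ have orthogonal ranges and $V_1,V_2$ are isometries — and every $W_t$ covers $f$, since its support lies in the same bounded neighbourhood of the graph of $f$ as those of $V_1$ and $V_2$. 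Hence $t\mapsto \ad(W_t)$ is a norm-continuous path of $*$-homomorphisms $C^*(X)^\Gamma\to C^*(Y;H'\oplus H')^\Gamma$, and homotopy invariance of $K$-theory gives $\ad(\iota_1 V_1)_* = \ad(\iota_2 V_2)_*$. Since $\ad(\iota_j V_j)=\ad(\iota_j)\circ\ad(V_j)$, this reads $\ad(\iota_1)_*\ad(V_1)_* = \ad(\iota_2)_*\ad(V_2)_*$. Identifying $C^*(Y;H'\oplus H')^\Gamma\cong M_2(C^*(Y)^\Gamma)$ entrywise, $\ad(\iota_1)_*$ and $\ad(\iota_2)_*$ become the two standard corner inclusions, which induce the same canonical isomorphism $K_*(C^*(Y)^\Gamma)\xrightarrow{\sim} K_*(M_2(C^*(Y)^\Gamma))$; cancelling it yields $\ad(V_1)_*=\ad(V_2)_*$.

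The module-independence statement is the case $X=Y$, $f=\iden_X$ with ample $X$-modules $H,H'$: existence provides covering isometries $V\colon H\to H'$ and $W\colon H'\to H$ for $\iden_X$, and since $WV$ and $\iden_H$ both cover $\iden_X$, the independence just proved gives $\ad(W)_*\ad(V)_* = \ad(WV)_* = \iden$, and symmetrically $\ad(V)_*\ad(W)_*=\iden$; thus $\ad(V)_*$ is an isomorphism, canonical by independence. The same bookkeeping, using that $VW$ covers $g\circ f$ whenever $V$ covers $g$ and $W$ covers $f$, upgrades this to functoriality of $K_*(C^*(-)^\Gamma)$ on coarse maps.

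I expect the main obstacle to be the equivariant construction of the covering isometry in the first step: arranging the Borel partition, the local partial isometries and their sum to be simultaneously $\Gamma$-equivariant and to have $\Support(V)$ within bounded distance of the graph of $f$. This is exactly where bounded geometry of $X$ and $Y$ and the freeness and properness of the $\Gamma$-action are used; by contrast the $K$-theoretic part — the rotation homotopy $W_t$ and the corner cancellation — is formal.
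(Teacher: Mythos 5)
The paper does not prove this statement; it cites it directly from Higson--Roe (Proposition 6.3.12 and its equivariant analogue), and your argument is a correct reconstruction of that standard proof: existence via an equivariant bounded Borel partition plus ampleness, and uniqueness via the $2\times 2$ rotation trick, which is just the unpacked form of Higson--Roe's lemma that conjugation by a covering partial isometry in the multiplier algebra acts trivially on $K$-theory. The only point you gloss over is arranging the ranges of the local isometries $V_i$ to be mutually orthogonal when the sets $U_i'$ overlap (handled as usual by choosing countably many orthogonal infinite-dimensional subspaces inside each $\chi_{U_i'}H'$), which is part of the standard construction you correctly flag as the technical core.
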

For the rest of the section we consider a space $Z$ with a chosen $Z$-module $H$. In the case the action of $\Gamma$ on $Z$ is cocompact we have the following
\begin{proposition}
 If the action of $\Gamma$ on $Z$ is cocompact, then $K_*(C^*(Z)^\Gamma) \cong K_*(C^*_{q}(\Gamma))$, where $C^*_{q}(\Gamma)$ is the quotient completion of the group ring of $\Gamma$ as introduced in \cite{SS18}.
\end{proposition}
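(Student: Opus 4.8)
The plan is to exploit module-independence: by Proposition~\ref{prop:funcclasroealg}, $K_*(C^*(Z)^\Gamma)$ is unchanged if we replace $H$ by any other ample $Z$-module, so I would choose one for which the algebraic Roe algebra becomes a transparent $*$-algebra. Since the $\Gamma$-action is free, proper and cocompact, there is a Borel fundamental domain $F\subseteq Z$ whose closure $\overline F$ is compact; fixing a $\Gamma$-invariant Radon measure of full support on $Z$ gives a $(C_0(Z),\Gamma)$-equivariant identification $L^2(Z)\cong\ell^2(\Gamma)\otimes L^2(F)$, with $C_0(Z)$ acting by multiplication and $\Gamma$ by the left regular representation on the first tensor factor. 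Tensoring with $\ell^2(\mathbb{N})$ yields an ample covariant module $H:=\ell^2(\Gamma)\otimes H_F$, where $H_F:=L^2(F)\otimes\ell^2(\mathbb{N})$, and I would work with this $H$ throughout.

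Next I would identify $\reals(Z)^\Gamma$ with the algebraic tensor product $\mathbb{C}[\Gamma]\odot\mathbb{K}(H_F)$. By covariance, a $\Gamma$-equivariant operator $T$ on $\ell^2(\Gamma)\otimes H_F$ is determined by a family $(T_\gamma)_{\gamma\in\Gamma}$ of operators on $H_F$ with $T=\sum_\gamma\rho_\gamma\otimes T_\gamma$, where $\rho_\gamma$ denotes right translation. The point at which bounded geometry of $Z$ enters is that for every $R>0$ the set $\{\gamma\in\Gamma:\dist(\overline F,\gamma\overline F)\le R\}$ is finite, since it embeds into the ball of radius $R+2\operatorname{diam}(\overline F)$ around the identity inside a single $\Gamma$-orbit, which is finite by bounded geometry. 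Hence finite propagation of $T$ is equivalent to the family $(T_\gamma)$ being finitely supported, while local compactness — tested against a function $f\in C_c(Z)$ with $f\equiv 1$ on $\overline F$, available because $\overline F$ is compact — is equivalent to each $T_\gamma$ lying in $\mathbb{K}(H_F)$; the converse implication is a direct norm estimate. This produces the desired $*$-isomorphism $\reals(Z)^\Gamma\cong\mathbb{C}[\Gamma]\odot\mathbb{K}(H_F)$.

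Finally I would pass to completions and take $K$-theory. The quotient completion of $\mathbb{C}[\Gamma]\odot\mathbb{K}(H_F)$, computed through the isomorphism just established, is $C^*_q(\Gamma)\widehat{\otimes}\,\mathbb{K}(H_F)$: for the reduced and maximal norms this is the classical identification of equivariant Roe algebras of cocompact spaces with stabilised group $C^*$-algebras, and the quotient norm of \cite{SS18} is by construction squeezed between these two and built from the same stabilised picture, so the spatially implemented isomorphism extends to $C^*(Z)^\Gamma\cong C^*_q(\Gamma)\widehat{\otimes}\,\mathbb{K}(H_F)$. Since $H_F$ is a separable infinite-dimensional Hilbert space, $\mathbb{K}(H_F)\cong\mathbb{K}$, and stability of $K$-theory under tensoring with the compacts gives $K_*(C^*(Z)^\Gamma)\cong K_*(C^*_q(\Gamma))$. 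The one step that demands genuine care rather than routine bookkeeping is this last matching of completions, i.e.\ verifying that the norm defining $C^*_q(\Gamma)$ in \cite{SS18} pulls back, under $\reals(Z)^\Gamma\cong\mathbb{C}[\Gamma]\odot\mathbb{K}(H_F)$, to the norm defining $C^*_q(Z)^\Gamma$; I expect the bulk of the (still essentially routine) work to sit there, leaning on the precise constructions of \cite{SS18}.
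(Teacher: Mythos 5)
Your proposal follows essentially the same route as the paper: the isomorphism $\reals(Z)^\Gamma\cong\mathbb{C}[\Gamma]\odot\mathbb{K}(H_F)$ that you construct by hand via a fundamental domain is exactly what the paper imports from the proof of \cite{HR}*{Lemma 12.5.3}, and both arguments then extend this to an isometry between the quotient completions and conclude by stability of $K$-theory. The step you rightly flag as the delicate one (matching the quotient norm of \cite{SS18} on both sides) is asserted in the paper at the same level of detail, so there is no substantive difference in approach.
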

\begin{proof}
In the proof of \cite{HR}*{Lemma 12.5.3} an isomorphism $\reals(X)^{\Gamma} \cong \mathbb{C}[\Gamma]\odot K(H^\prime)$ is given. Here, $\mathbb{C}[\Gamma]$ denotes the complex group ring of $\Gamma$ and $K(H^\prime)$ denotes the algebra of compact operators on a suitable Hilbert space $H^\prime$. This isomorphism becomes an isometry if the left hand side is endowed with the norm of $C^*(X)^\Gamma$ and the right hand side is endowed with the norm of $C^*_q(\Gamma) \otimes K(H^\prime)$ and thus extends to an isomorphism of the latter two algebras. The claim then follows from the stability of $K$-theory.
\end{proof}
Given a $\Gamma$-invariant subset $S \subset Z$ it will be useful to look at the $*$-algebra of operators in $\reals(Z)^\Gamma$ which are supported near $S$ in the sense of the following
\begin{definition}
  Given a subset $S \subset Z$, $T$ is said to be supported near $S$ if there exists an $R > 0$ with the property that $\supp T \subset U_R(S)\times U_R(S)$. Here $U_R(S)$ denotes the open $R$-neighbourhood of $S$.
\end{definition}
\begin{definition}
Let $S$ be a $\Gamma$-invariant subset of $Z$. The equivariant \textit{algebraic Roe algebra of $S$ relative to $Z$} is the subalgebra of $\reals(Z)^\Gamma$ consisting of operators supported near $S$ and will be denoted by $\reals(S\subset Z)^\Gamma$. The equivariant \textit{Roe algebra of $S$ relative to $Z$} is the closure of $\reals(S\subset Z)^\Gamma$ in $C^*(Z)^\Gamma$ and is denoted by $C^*(S\subset Z)^\Gamma$.
\end{definition}
Since $S$ is itself a $\Gamma$-space, it has its own Roe algebra. This is related to the Roe algebra of $S$ relative to $Z$ by the following
\begin{proposition}[\cite{HRY}*{Section 5, Lemma 1}]\label{prop:relabs}
$K_*(C^*(S)^\Gamma) \cong K_*(C^*(S\subset Z)^\Gamma)$.
\end{proposition}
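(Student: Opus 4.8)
The claimed isomorphism should be the canonical map induced by the coarse inclusion $\iota\colon S\hookrightarrow Z$: choosing, as in \cite{HR}*{Lemma 6.3.11} and Proposition~\ref{prop:funcclasroealg}, an isometry $V$ covering $\iota$, conjugation by $V$ carries $\reals(S)^\Gamma$ into $\reals(S\subset Z)^\Gamma$ --- because $V$ is supported within bounded distance of the diagonal of $S\times S$ --- and hence induces a well-defined map $K_*(C^*(S)^\Gamma)\to K_*(C^*(S\subset Z)^\Gamma)$. So I only have to show this map is an isomorphism; I would assume throughout that $S$ is a closed $\Gamma$-invariant subset, so that it is locally compact and $C^*(S)^\Gamma$ is defined, and the same argument applies verbatim to the $\Cl_n$-linear algebras.

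My plan is to realise $C^*(S\subset Z)^\Gamma$ as a direct limit of genuine Roe algebras. Fix an ample $Z$-module $H$, and for $n\ge 0$ put $S_n\coloneqq\{x\in Z:\dist(x,S)\le n\}$, a closed $\Gamma$-invariant --- hence locally compact, separable, proper --- metric space with $S_0=S$. The part of $H$ lying over $S_n$ is an ample $S_n$-module, and I identify $C^*(S_n)^\Gamma$ with the norm closure in $C^*(Z)^\Gamma$ of $\{T\in\reals(Z)^\Gamma:\supp T\subseteq S_n\times S_n\}$. If $\supp T\subseteq U_R(S)\times U_R(S)$ then $\supp T\subseteq S_R\times S_R$, and conversely any $T$ with $\supp T\subseteq S_n\times S_n$ is supported near $S$; hence $\reals(S\subset Z)^\Gamma=\bigcup_n\{T\in\reals(Z)^\Gamma:\supp T\subseteq S_n\times S_n\}$, so that $C^*(S\subset Z)^\Gamma=\overline{\bigcup_n C^*(S_n)^\Gamma}$ is an increasing union of $C^*$-subalgebras. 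By continuity of $K$-theory, $K_*(C^*(S\subset Z)^\Gamma)=\varinjlim_n K_*(C^*(S_n)^\Gamma)$, and under the canonical identifications of Proposition~\ref{prop:funcclasroealg} the connecting maps are those induced by the coarse inclusions $S_n\hookrightarrow S_{n+1}$.

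Now $S_n$ is coarsely dense in $S_{n+1}$: every point of $S_{n+1}$ has distance at most $n+1$ to $S$, and $S\subseteq S_n$. Hence each inclusion $S_n\hookrightarrow S_{n+1}$ is an equivariant coarse equivalence, so by the coarse invariance of the $K$-theory of Roe algebras --- a standard consequence of the independence statement in Proposition~\ref{prop:funcclasroealg}, see also \cite{HR} --- every connecting map $K_*(C^*(S_n)^\Gamma)\to K_*(C^*(S_{n+1})^\Gamma)$ is an isomorphism. It then follows that the structure map $K_*(C^*(S)^\Gamma)=K_*(C^*(S_0)^\Gamma)\to\varinjlim_n K_*(C^*(S_n)^\Gamma)=K_*(C^*(S\subset Z)^\Gamma)$ is an isomorphism, and a routine check identifies it with the canonical map of the first paragraph.

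The coarse-geometric input here is elementary. The points I expect to demand care --- though all are standard --- are the assertion that the part of an ample module lying over a closed subset is again an ample module for that subset, and the compatibility of the chosen completion with this passage to subspaces, i.e.\ the claim that the two $C^*$-norms in the identification of $C^*(S_n)^\Gamma$ above agree: this is automatic for the reduced completion, and for the quotient completion it should follow from its construction in \cite{SS18}. The overall strategy parallels \cite{HRY}*{Section~5}.
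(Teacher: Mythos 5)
The paper offers no proof of this proposition at all --- it simply cites Higson--Roe--Yu --- and your argument (exhaust $C^*(S\subset Z)^\Gamma$ by the Roe algebras of the closed neighbourhoods $S_n$, invoke continuity of $K$-theory, and observe that each inclusion $S_n\hookrightarrow S_{n+1}$ is an equivariant coarse equivalence because $S$ is coarsely dense in every $S_n$) is precisely the standard argument of that reference, so it is correct and takes the same route. The one literal inaccuracy is the identification of $C^*(S_0)^\Gamma$ with the closure of the operators supported in $S\times S$: the restriction $\chi_{S}H$ of an ample $Z$-module to a closed subset need not be ample (it can even vanish, e.g.\ for $S$ of measure zero in a manifold), but this is harmless since your comparison map out of $C^*(S)^\Gamma$ is defined via a covering isometry rather than via $\chi_S H$ and the direct limit may just as well be started at $n=1$ --- exactly the point you already flag as requiring care.
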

We will also need the notion of support of a vector in $H$.
\begin{definition}
Let $v \in H$. The support of $v$ is the complement of the union of all open subsets $U$ with the property that $fv = 0$ for all $f\in C_0(U)$.
\label{def:suppofvecs}
\end{definition}
\subsection{Yu's Localisation Algebras}
Given a $C^*$-algebra $A$ denote by $\mathfrak{T}A$ the $C^*$-algebra of all uniformly continuous functions $f: [1,\infty) \rightarrow A$ endowed with the supremum norm.
\begin{definition}
	\label{def:localg}
The \textit{equivariant localisation algebra} of $Z$ is defined to be the $C^*$-subalgebra of $\mathfrak{T}C^*(Z)^\Gamma$ generated by elements $f$ satisfying
\begin{itemize}
\item $\prop f(t) < \infty$ for all $t \in [1,\infty)$
\item $\prop f(t) \xrightarrow[]{t\rightarrow \infty} 0$.
\end{itemize}
It will be denoted by $C^*_{L}(Z)^\Gamma$.
\end{definition}
\begin{remark}
	\label{rem:clmax}
	Replacing the quotient completion of the Roe algebra with the maximal (respectively reduced) completion in Definition \ref{def:localg} we can define the maximal (respectively reduced) version of the equivariant localisation algebra $C^*_{L,\max}(Z)^\Gamma$ (respectively $C^*_{L,\red}(Z)^\Gamma$).
\end{remark}
\begin{remark} The $K$-theory of the localisation algebra, obtained by using any of the discussed completions provides a model for the equivariant locally finite $K$-homology. Yu constructed an isomorphism $\Ind_L:K_*^\Gamma(Z) \rightarrow K_*(C^*_{L,\red}(Z)^\Gamma)$, where $K_*^\Gamma(Z)$ denotes the equivariant $KK$-group $KK_*^\Gamma(C_0(Z),\mathbb{C})$. We refer the reader to \cite{Yu} and \cite{QR} for the proof of the isomorphism in the reduced case. See the proof of \cite{SS18}*{Theorem 2.34} and the rest of the discussion in \cite{SS18}*{Section 2} for a proof of the result for an arbitrary completion.
\end{remark}
\begin{definition}
A $\Gamma$-cover $Z$ of a locally compact metric space $M$ is called \textit{nice} if there exists an $\epsilon > 0$ such that the restriction of $Z$ to every $\epsilon$-ball in $M$ is trivial.
\end{definition}
Note that any cover of a compact metric space is nice. The following example is more important for us. Given a compact Riemannian manifold $M$ with boundary $N$ such that the Riemannian metric is collared near $N$, $M_\infty\coloneqq M \cup_N (N\times \mathbb{R}_+)$ can be made into a Riemannian manifold in a natural way (endowing $N\times \mathbb{R}_+$ with the product metric). Any Galois cover of $M_\infty$ is nice.
\begin{proposition}
Let $Z \rightarrow M$ be a nice $\Gamma$-cover. Then there is an isomorphism $K_*(C^*_{L}(Z)^\Gamma) \cong K_*(C^*_{L}(M))$ induced by lifting operators on $M$ with small propagation to equivariant operators on $Z$. In particular $\Ind_L$ gives rise to an isomorphism $K_*(M) \cong K_*(C^*_{L}(Z)^\Gamma)$.
\label{prop:locind}
\begin{remark}
In the following we will assume all covers to be nice.
\end{remark}
\end{proposition}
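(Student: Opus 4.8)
The plan is to build the isomorphism from an explicit lifting map, defined first only on operators of small propagation, and then to promote it to the localisation algebras. First, I would fix $\epsilon>0$ witnessing the niceness of $Z\to M$ (shrinking it if necessary so that the constructions below have enough room). Given an ample $M$-module $H$, use the trivialisations of $Z$ over the sets of a cover of $M$ by balls of radius $\epsilon$, together with the associated $\Gamma$-valued transition cocycle, to patch the local modules $H\restriction_V\otimes\ell^2(\Gamma)$ (with $\Gamma$ acting on the second factor by the regular representation) into an ample $Z$-module $\widehat H$ equipped with a $\Gamma$-action; by module-independence of the $K$-theory (the localisation-algebra analogue of Proposition~\ref{prop:funcclasroealg}) there is no loss in working with this particular pair of modules. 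If $T\in L(H)$ has propagation less than $\epsilon$, then $T$ is supported near the diagonal, so the local operators ``$T\otimes 1$'' agree on overlaps and assemble into a $\Gamma$-equivariant operator $\widehat T$ on $\widehat H$ with $\prop\widehat T=\prop T$; conversely every $\Gamma$-equivariant operator on $\widehat H$ of propagation less than $\epsilon$ arises this way. One then checks that $T\mapsto\widehat T$ preserves local compactness, is multiplicative on operators of propagation less than $\epsilon/2$, and is isometric for the completions we use---for the reduced completion this is clear from the spatial picture, and for the quotient completion it follows by an argument parallel to the cocompact case treated above, since the covariant representation used to define the quotient norm on $\reals(M)$ pulls back to one computing the quotient norm on $\reals(Z)^\Gamma$; compare \cite{SS18}. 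Applying $T\mapsto\widehat T$ at each $t\in[1,\infty)$ then carries a function $f$ with $\sup_t\prop f(t)<\epsilon/2$ and $\prop f(t)\to 0$ over $M$ to such a function over $Z$, compatibly with the localisation-algebra structure.

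It remains to promote this to an isomorphism on the $K$-theory of the \emph{full} localisation algebras $C^*_L(M)$ and $C^*_L(Z)^\Gamma$, that is, to show that restricting attention to operators of propagation less than $\epsilon$ does not affect $K$-theory. This is the step I expect to be the main obstacle, and it is where niceness is genuinely used. The guiding principle is that $K_*(C^*_L(-))$ depends only on the germ of the space near the diagonal: given a class in $K_*(C^*_L(M))$ one replaces a representative $f$ by the reparametrisation $t\mapsto f(t+S)$ for $S$ large, which, since $\prop f(t)\to 0$, has propagation below $\epsilon$ uniformly in $t$; the family $t\mapsto f(t+s)$ for $s\in[0,S]$ is a norm-continuous homotopy because the elements of $\mathfrak T C^*(M)$ are uniformly continuous. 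The genuine difficulty is that the required shift $S$ depends on the chosen representative and that the spectral projections needed to produce honest $K$-theory classes do not respect propagation bounds, so a naive application of functional calculus fails; one has to argue in a controlled manner, exactly as in the proof that localisation algebras compute $K$-homology (see \cite{Yu}, \cite{QR}, and \cite{SS18}*{Section~2}). The cleanest organisation is probably a Mayer--Vietoris argument along a $\Gamma$-invariant cover of $Z$ by trivialised pieces, which reduces the claim to the case of a trivial cover $V\times\Gamma\to V$, where it is evident; alternatively one works with the Rips-type approximations of $M$ and $Z$ at scales smaller than $\epsilon$, on which the lifting map is plainly defined and natural.

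Granting this, the lifting induces an isomorphism $K_*(C^*_L(M))\cong K_*(C^*_L(Z)^\Gamma)$. For the last assertion, the remark following Definition~\ref{def:localg} supplies the isomorphism $\Ind_L\colon K_*(M)\xrightarrow{\ \cong\ }K_*(C^*_L(M))$, and composing it with the isomorphism just produced gives $K_*(M)\cong K_*(C^*_L(Z)^\Gamma)$. As a consistency check one may note that the same isomorphism is visible abstractly: $\Ind_L$ identifies $K_*(C^*_L(Z)^\Gamma)$ with $K^\Gamma_*(Z)=KK^\Gamma_*(C_0(Z),\mathbb C)$, which in turn---the action of $\Gamma$ on $Z$ being free and proper with quotient $M$---is identified with $KK_*(C_0(M),\mathbb C)=K_*(M)$ by descent; niceness is precisely what ensures that this abstract identification coincides with the one induced by the geometric lifting map.
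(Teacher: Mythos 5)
The paper does not actually prove Proposition~\ref{prop:locind}: it is stated as a known fact, with the surrounding remarks pointing to \cite{Yu}, \cite{QR} and \cite{SS18}*{Section 2} for the underlying technology. So there is no ``paper proof'' to compare against; I can only assess your sketch on its own terms. Your outline follows the standard route from that literature and is essentially correct: (i) use niceness to set up a bijective, propagation-preserving correspondence between operators on $M$ of propagation less than $\epsilon$ and $\Gamma$-equivariant operators on the cover, compatible with products as long as propagations stay below the trivialisation scale; (ii) show that the subalgebra of $C^*_L$ built from paths of uniformly small propagation exhausts the $K$-theory, via the reparametrisation homotopy $t\mapsto f(t+s)$, which is exactly where the hypothesis $\prop f(t)\to 0$ and the niceness of the cover are consumed.

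One remark on the step you single out as the main obstacle. The worry about spectral projections destroying propagation bounds largely dissolves if you phrase step (ii) correctly: let $C^*_{L,\epsilon}(M)\subset C^*_L(M)$ be the \emph{$C^*$-subalgebra} generated by paths with $\sup_t\prop f(t)<\epsilon$. This is a closed subalgebra, hence closed under continuous functional calculus, so a projection obtained by functional calculus from an almost-projection of uniformly small propagation still lies in $C^*_{L,\epsilon}(M)$ even though it no longer has finite propagation itself; surjectivity and injectivity of $K_*(C^*_{L,\epsilon}(M))\to K_*(C^*_L(M))$ then follow from the reparametrisation homotopy applied to representatives from the dense subalgebra. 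The lifting map, being isometric and multiplicative on the generators, extends to a $*$-isomorphism $C^*_{L,\epsilon}(M)\to C^*_{L,\epsilon}(Z)^\Gamma$ by continuity, and the two reductions fit together. Your closing $KK$-theoretic consistency check is fine as a sanity check but is not needed for the argument. With the above reorganisation your proposal is a correct proof of the statement.
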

Given a $\Gamma$-invariant subset $S$ of $Z$ it will be useful to define the localisation algebra of $S$ relative to $Z$.
\begin{definition}
The \textit{equivariant localisation algebra of $S$ relative to $Z$} is defined as the $C^*$-subalgebra of $C^*_{L}(Z)^\Gamma$ generated by elements $f$ with the property that there exists a continuous function $B:[1,\infty) \rightarrow \reals$ vanishing at infinity such that $\supp f(t) \subset U_{B(t)}(S)\times U_{B(t)}(S)$. It will be denoted by $C^*_{L}(S \subset Z)^\Gamma$.
\end{definition}
\begin{proposition}[\cite{RZA}*{Lemma 3.7}]\label{prop:relabsl}
$K_*(C^*_{L}(S)^\Gamma) \cong K_*(C^*_{L}(S \subset Z)^\Gamma)$
\end{proposition}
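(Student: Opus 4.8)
The plan is to reduce the statement to the non-localised case, Proposition \ref{prop:relabs}, together with the coarse-invariance machinery behind Proposition \ref{prop:funcclasroealg}, while keeping the propagation condition under control throughout; in effect the argument will be a localised refinement of the proof of Proposition \ref{prop:relabs}. Since the $K$-theory of a localisation algebra is independent of the ample module used — the localised analogue of Proposition \ref{prop:funcclasroealg}, as exploited already in Proposition \ref{prop:locind} — we may first enlarge the $Z$-module $H$ so that it contains a $\Gamma$-invariant ample $S$-submodule $H_S$ consisting of vectors supported in $S$, and so that the compression of $H$ to $\overline{U_R(S)}$ is an ample $\overline{U_R(S)}$-module for every $R>0$. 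Extension by zero then carries $\reals(S)^\Gamma$ into the operators on $H$ supported on $S\times S$, and, applied pointwise in the parameter, yields an inclusion of $C^*$-algebras $\iota\colon C^*_{L}(S)^\Gamma\hookrightarrow C^*_{L}(S\subset Z)^\Gamma$: the support of the extension lies in $S\times S\subset U_{B(t)}(S)\times U_{B(t)}(S)$ and its propagation is unchanged. It remains to show that $\iota$ induces an isomorphism on $K$-theory.

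For injectivity, let $\mathcal N$ be the $C^*$-subalgebra of $C^*_{L}(Z)^\Gamma$ of families whose support stays inside a \emph{fixed} neighbourhood of $S$, so that $\mathcal N=\overline{\bigcup_{R>0}C^*_{L}(\overline{U_R(S)})^\Gamma}$, each $\overline{U_R(S)}$ carrying the compression module. Continuity of $K$-theory under inductive limits gives $K_*(\mathcal N)\cong\varinjlim_R K_*(C^*_{L}(\overline{U_R(S)})^\Gamma)$; since the inclusions $S\hookrightarrow\overline{U_R(S)}$ are coarse equivalences and the transition maps become the identity of $K_*(C^*_{L}(S)^\Gamma)$ under the localised form of Proposition \ref{prop:funcclasroealg}, this colimit is identified with $K_*(C^*_{L}(S)^\Gamma)$. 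One checks that $C^*_{L}(S\subset Z)^\Gamma\subseteq\mathcal N$ (a generator with bound $B\le M$ lies in $C^*_{L}(\overline{U_M(S)})^\Gamma$) and that the composite $C^*_{L}(S)^\Gamma\xrightarrow{\iota}C^*_{L}(S\subset Z)^\Gamma\hookrightarrow\mathcal N$ is the canonical inclusion inducing the above isomorphism; hence $\iota_*$ is split injective.

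For surjectivity, represent a class in $K_*(C^*_{L}(S\subset Z)^\Gamma)$ by a cycle $p=(p(t))_t$ (a projection or invertible over matrices of the unitalisation) with $\supp p(t)\subseteq U_{B(t)}(S)\times U_{B(t)}(S)$ and $B(t)\to 0$, and choose a $\Gamma$-equivariant isometry $W(t)$ from the submodule of vectors supported in $U_{B(t)}(S)$ onto $H_S$ covering the nearest-point retraction $U_{B(t)}(S)\to S$; such an isometry has displacement at most $B(t)$. Then $q\coloneqq(W(t)p(t)W(t)^*)_t$ is a cycle over $C^*_{L}(S)^\Gamma$, because its propagation is at most $\prop p(t)+2B(t)\to 0$, and the family $(W(t)p(t))_t$ lies over $C^*_{L}(S\subset Z)^\Gamma$ and implements a Murray--von Neumann equivalence between $p$ and $\iota(q)$; hence $[p]=\iota_*[q]$. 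The conceptual content is only the observation that, in the localised setting, the ``near $S$'' condition forces supports to concentrate onto $S$ itself, so the ambient space $Z$ becomes asymptotically irrelevant. The main obstacle is therefore bookkeeping: one must choose the isometries $W(t)$ both $\Gamma$-equivariantly and with enough regularity in $t$ that the conjugates and the implementing families genuinely define elements of the localisation algebras rather than merely of the ambient $\mathfrak{T}C^*(Z)^\Gamma$, which after passing to a dense subalgebra of cycles with tame $t$-dependence is routine. Alternatively, one can run a Five Lemma argument on the map between the Higson--Roe-type exact sequences for $S$ and for $S\subset Z$, reducing the claim to Proposition \ref{prop:relabs} and the corresponding statement for the kernels of the evaluation-at-$1$ maps.
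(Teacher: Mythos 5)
The paper does not prove this statement; it is quoted from \cite{RZA}*{Lemma 3.7}, so your proposal can only be judged against the standard argument (which is the covering-isometry/conjugation argument you use in your surjectivity step).

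Your injectivity argument contains a genuine error. You identify $K_*(\mathcal N)\cong\varinjlim_R K_*(C^*_{L}(\overline{U_R(S)})^\Gamma)$ with $K_*(C^*_{L}(S)^\Gamma)$ on the grounds that the inclusions $S\hookrightarrow\overline{U_R(S)}$ are coarse equivalences. But Proposition \ref{prop:funcclasroealg} is a statement about Roe algebras; the $K$-theory of \emph{localisation} algebras is locally finite $K$-homology, a topological rather than coarse invariant, and it is not preserved by coarse equivalences. Concretely, take $S=\mathbb{Z}\subset Z=\reals$: then $\overline{U_R(S)}=\reals$ for $R\geq 1$, so $\mathcal N=C^*_L(\reals)$, and $K_0(C^*_L(\reals))\cong K_0^{\mathrm{lf}}(\reals)=0$ while $K_0(C^*_L(\mathbb{Z}))\cong\prod_{\mathbb{Z}}\mathbb{Z}$. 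So $K_*(\mathcal N)\not\cong K_*(C^*_L(S)^\Gamma)$ in general, the colimit identification fails, and the claimed split injectivity of $\iota_*$ does not follow. (This is exactly the point of the proposition: the hypothesis $B(t)\to 0$ is much stronger than confinement to a fixed neighbourhood, and your $\mathcal N$ discards it.)

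The repair is to use for injectivity the same device you use for surjectivity: the family $W(t)$, chosen to restrict to (something homotopic to) the identity on vectors supported in $S$, gives a conjugation map $\ad_W\colon C^*_L(S\subset Z)^\Gamma\to C^*_L(S)^\Gamma$ with $(\ad_W)_*\circ\iota_*=\iden$ and $\iota_*\circ(\ad_W)_*=\iden$ on $K$-theory; this is the content of Zeidler's proof. Your surjectivity step is essentially this argument and is morally correct, but note that the ``bookkeeping'' you defer is not entirely cosmetic: an arbitrary choice of $W(t)$ for each $t$ need not be strongly continuous in $t$, so one must take $W$ locally constant along a sequence $t_n\to\infty$ and join the pieces by rotation homotopies (or an equivalent device) to land in the localisation algebra rather than merely in $\mathfrak{T}C^*(Z)^\Gamma$. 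Finally, the alternative Five Lemma route you sketch is not a reduction: it requires the corresponding statement for $C^*_{L,0}(S)^\Gamma\to C^*_{L,0}(S\subset Z)^\Gamma$, which is no easier than the proposition itself.
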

\begin{definition}
Consider the evaluation at $1$ map $\ev_1\colon C^*_{L}(Z)^\Gamma \rightarrow C^*(Z)^\Gamma$ sending $f$ to $f(1)$. The \textit{equivariant structure algebra} of $Z$ is the kernel of the homomorphism $\ev_1$; i.e., it is the $C^*$-subalgebra of $C^*_{L}(Z)^\Gamma$ consisting of $C^*(Z)^\Gamma$-valued functions $f$ on $[1,\infty)$ with $f(1) = 0$. It is denoted by $C^*_{L,0}(Z)^\Gamma$.
\end{definition}
Given a $\Gamma$-cover $Z \rightarrow M$ induced by a map $M \rightarrow B\Gamma$, with $M$ compact, the index map $\mu^{\Gamma}: K_*(M) \rightarrow K_*(C^*(\Gamma))$ can be defined by
$$K_*(M) \cong K_*(C^*_L(Z)^\Gamma) \xrightarrow{(\ev_1)_*} K_*(C^*(Z)^{\Gamma}) \cong K_*(C^*_q(\Gamma)).$$ Clearly, it fits into a long exact sequence
$$\hdots \rightarrow S_*^{\Gamma}(M) \rightarrow K_*(M) \rightarrow K_*(C^*(\Gamma)) \rightarrow \hdots,$$
where $S_*^{\Gamma}(M)$ denotes $K_*(C^*_{L,0}(Z)^\Gamma)$ and is called the \textit{analytic structure group}. This long exact sequence is called the \textit{Higson-Roe analytic surgery sequence}. 

\subsubsection{Fundamental Class of Dirac Operators} \label{locind}
Now suppose that $Z$ is an $n$-dimensional spin manifold. We assume that $\Gamma$ acts by spin structure preserving isometries. Denote by $\slashed{\mathfrak{S}} = P_{\Spin}(Z) \times_{\Spin}\Cl_n$ the $\Cl_n$-spinor bundle on $Z$. Recall that the $\Cl_n$-linear Dirac operator $\slashed{D}_Z$ on $Z$ (acting on sections of $\slashed{\mathfrak{S}}$) gives rise to a class in $K_*(Z)^\Gamma$. Under the isomorphism of \ref{prop:locind}, this class corresponds to the class $[\slashed{D}_Z] \in \widehat{K}_0(C^*_L(Z;\Cl_n)^\Gamma) \cong K_n(C^*_L(Z)^\Gamma)$ defined by $\varphi_{\slashed{D}_Z} : \mathcal{S} \rightarrow C^*_L(Z;\Cl_n)^\Gamma$ sending $f\in \mathcal{S}$ to $(t \mapsto f(\frac{1}{t}\slashed{D}_Z)) \in C^*_L(Z;\Cl_n)^\Gamma$.
\subsection{The Relative Index Map} \label{relind}
Let $\Lambda$ and $\Gamma$ be discrete groups and $\varphi: \Lambda \rightarrow \Gamma$ a group homomorphism. The homomorphism $\varphi$ gives rise to a continuous map $B\varphi: B\Lambda \rightarrow B\Gamma$. It also induces a map $\varphi :C^*_{\max}(\Lambda)\rightarrow C^*_{\max}(\Gamma)$. We can and will assume that $B\varphi$ is injective. Given a compact space $X$, a subset $Y \subset X$ and a map $f:(X,Y) \rightarrow (B\Gamma,B\Lambda)$ Chang, Weinberger and Yu (\cite{CWY}) define a relative index map $\mu^{\Gamma,\Lambda}:K_*(X,Y) \rightarrow K_*(C^*_{\max}(\Gamma,\Lambda))$. Here $C^*_{\max}(\Gamma, \Lambda) \coloneqq SC_\varphi$ denotes the suspension of the mapping cone of $\varphi$ and is called the (maximal) relative group $C^*$-algebra. If $X$ is not compact, then their construction gives rise to a relative index map with target the $K$-theory group of a relative Roe algebra. Here, we quickly recall the construction of the relative index map. Denote by $\widetilde{X}$ and $\widetilde{Y}$ the $\Gamma$ and $\Lambda$ coverings of $X$ and $Y$ associated to $f$ and $f|_Y$ respectively. Denote by $Y^\prime$ the restriction of $\widetilde{X}$ to $Y$. Using particular $\widetilde{X},Y^\prime$ and $\widetilde{Y}$-modules Chang, Weinberger and Yu construct a morphism of $C^*$-algebras
$$\psi : C^*_{\max}(\widetilde{Y})^\Lambda \rightarrow C^*_{\max}(Y^\prime)^{\frac{\Lambda}{\kernel \phi}} \hookrightarrow C^*_{\max}(\widetilde{X})^\Gamma.$$
We will later discuss the morphism $\psi$ in more detail. Applying $\psi$ pointwise we obtain a morphism
$$\psi_{L} : C^*_{L,\max}(\widetilde{Y})^\Lambda \rightarrow C^*_{L,\max}(Y^\prime)^{\frac{\Lambda}{\kernel \phi}} \hookrightarrow C^*_{L,\max}(\widetilde{X})^\Gamma.$$
See Remark \ref{rem:clmax} above for the definition of $C^*_{L,\max}$. We denote by $C_{\psi_L}$ the mapping cone of the homomorphism $\psi_L$ and by $SC_{\psi_L}$ its suspension. Analogous to the absolute case, there is a map $\Ind_L^{\rel} :K_*(X,Y) \rightarrow K_*(SC_{\psi_L})$. 
\begin{proposition}
$\Ind_L^{\rel}$ is an isomorphism. If, furthermore, $X$ is compact, then $K_{*}(SC_{\psi}) \cong K_*(C^*_{\max}(\Gamma,\Lambda))$.
\end{proposition}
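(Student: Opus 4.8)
The plan is to prove both assertions by the five lemma, comparing long exact sequences. Relative $K$-homology sits in the sequence
\[
\cdots \to K_n(Y) \xrightarrow{\iota_*} K_n(X) \to K_n(X,Y) \xrightarrow{\partial} K_{n-1}(Y) \to \cdots ,
\]
where $\iota\colon Y\hookrightarrow X$ is the inclusion, while the mapping cone construction applied to $\psi_L$ gives
\[
\cdots \to K_n(C^*_{L,\max}(\widetilde{Y})^\Lambda) \xrightarrow{(\psi_L)_*} K_n(C^*_{L,\max}(\widetilde{X})^\Gamma) \to K_n(SC_{\psi_L}) \to K_{n-1}(C^*_{L,\max}(\widetilde{Y})^\Lambda) \to \cdots ;
\]
the extra suspension in $C^*_{\max}(\Gamma,\Lambda)=SC_\varphi$ is exactly the degree normalisation that makes the two sequences live in the same degree. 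I would assemble these into a commutative ladder whose vertical arrows are $\Ind_L$ on the absolute terms (for $Y$ and for $X$) and $\Ind_L^{\rel}$ on the relative terms.

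First I would record the two isomorphisms on the absolute terms: by Proposition \ref{prop:locind} and its proof, which are valid for any of the completions considered (in particular the maximal one, cf.\ the discussion in \cite{SS18}*{Section 2}) and apply since all covers are assumed nice, the maps $\Ind_L\colon K_*(Y)\to K_*(C^*_{L,\max}(\widetilde{Y})^\Lambda)$ and $\Ind_L\colon K_*(X)\to K_*(C^*_{L,\max}(\widetilde{X})^\Gamma)$ are isomorphisms. It then remains to verify commutativity of the ladder. The square involving $K_n(X)\to K_n(X,Y)$, the map to $K_n(SC_{\psi_L})$ and the two connecting homomorphisms commutes essentially by construction, since $\Ind_L^{\rel}$ is built out of $\Ind_L$ on the absolute pieces together with the cone construction (this is the content of "analogous to the absolute case"). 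The crucial square is the left-hand one, i.e.\ the identity $(\psi_L)_*\circ\Ind_L=\Ind_L\circ\iota_*$, which says that $\psi_L$ realises the inclusion $Y\hookrightarrow X$ on $K$-homology. I would prove this by factoring $\psi_L$ as the covering projection $\widetilde{Y}\to Y^\prime$ followed by the inclusion of $Y^\prime$ into $\widetilde{X}$: the first factor lies over $\iden_Y$, hence is an isomorphism on $K$-theory compatible with the identifications of Proposition \ref{prop:locind}, while the second realises $K_*(Y)\to K_*(X)$ by Propositions \ref{prop:relabsl} and \ref{prop:relabs} together with the inclusion of the relative localisation algebra of $Y^\prime$ into $C^*_{L,\max}(\widetilde{X})^\Gamma$. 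Granting this, the five lemma yields that $\Ind_L^{\rel}$ is an isomorphism.

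For the second statement assume $X$, and hence $Y$, compact, so that the $\Gamma$-action on $\widetilde{X}$ and the $\Lambda$-action on $\widetilde{Y}$ are cocompact. The proof of the structure theorem for cocompact actions recalled above (the isomorphism $\reals(Z)^\Gamma\cong\mathbb{C}[\Gamma]\odot K(H^\prime)$, extended to completions) gives, in its maximal version, stable isomorphisms $C^*_{\max}(\widetilde{Y})^\Lambda\cong C^*_{\max}(\Lambda)\otimes\mathbb{K}$ and $C^*_{\max}(\widetilde{X})^\Gamma\cong C^*_{\max}(\Gamma)\otimes\mathbb{K}$. Under these I claim that $\psi$ is homotopic, up to the standard stabilisation isomorphisms, to $\varphi\otimes\iden_{\mathbb{K}}$: tracing the construction, the covering projection $\widetilde{Y}\to Y^\prime$ induces on the corner the surjection $C^*_{\max}(\Lambda)\to C^*_{\max}(\Lambda/\kernel\varphi)$ — this is exactly the place where the maximal completion is indispensable, the quotient map of group rings not extending to the reduced completions — and the inclusion $Y^\prime\hookrightarrow\widetilde{X}$ induces $C^*_{\max}(\Lambda/\kernel\varphi)=C^*_{\max}(\varphi(\Lambda))\hookrightarrow C^*_{\max}(\Gamma)$; the composite is the factorisation of $\varphi\colon C^*_{\max}(\Lambda)\to C^*_{\max}(\Gamma)$ through its image. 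Since mapping cones of homotopic $*$-homomorphisms are homotopy equivalent and the mapping cone commutes with $-\otimes\mathbb{K}$, we get $SC_\psi\simeq S(C_\varphi\otimes\mathbb{K})=SC_\varphi\otimes\mathbb{K}$, whence $K_*(SC_\psi)\cong K_*(SC_\varphi)=K_*(C^*_{\max}(\Gamma,\Lambda))$ by stability of $K$-theory. Equivalently, one builds a ladder between the mapping cone sequences of $\varphi$ and of $\psi$ and applies the five lemma, using the cocompact isomorphisms on the absolute terms.

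The main obstacle is the non-formal input shared by both parts: that $\psi_L$ induces $\iota_*$ on $K$-homology, and that $\psi$ restricts to $\varphi$ on the corners. Both hinge on a careful analysis of the construction of $\psi$ from \cite{CWY} — in particular the passage from $\Lambda$-equivariant operators on $\widetilde{Y}$ to $\varphi(\Lambda)$-equivariant operators on $Y^\prime$, and the induction from operators supported near $Y^\prime$ to $\Gamma$-equivariant operators on $\widetilde{X}$. Once the explicit description of $\psi$ (to be recalled in detail later in the paper) is in hand, everything else is diagram chasing together with the bookkeeping of the degree shift contributed by the suspension in $SC_\varphi$.
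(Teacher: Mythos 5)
The paper does not actually prove this proposition; it is stated as imported from Chang--Weinberger--Yu and \cite{SS18}, where the argument is exactly the five-lemma comparison of the relative $K$-homology sequence with the mapping-cone sequence of $\psi_L$ (respectively of $\varphi$ with $\psi$ after the cocompact stabilisation $C^*_{\max}(\widetilde{X})^\Gamma\cong C^*_{\max}(\Gamma)\otimes\mathbb{K}$). Your reconstruction is correct and follows that standard route, and it rightly isolates the only non-formal inputs: the absolute local index isomorphisms for nice covers in the maximal completion, and the verification that $\psi_{(L)}$ realises $\iota_*$ (respectively $\varphi\widehat{\otimes}\iden_{\mathbb{K}}$) via its factorisation through $C^*_{\max}(Y^\prime)$.
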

Evaluation at $1$ gives rise to morphisms $\ev_1: C^*_{L,\max}(\widetilde{Y})^\Lambda \rightarrow C^*_{\max}(\widetilde{Y})^\Lambda$ and $\ev_1: C^*_{L,\max}(\widetilde{X})^\Gamma \rightarrow C^*_{\max}(\widetilde{X})^\Gamma$. The diagram
$$\begin{tikzcd}
    C^*_{L,\max}(\widetilde{Y})^\Lambda \arrow{r}{\ev_1} \arrow{d}{\psi_L} & C^*_{\max}(\widetilde{Y})^\Lambda \arrow{d}{\psi}\\
    C^*_{L,\max}(\widetilde{X})^\Gamma  \arrow{r}{\ev_1} & C^*_{\max}(\widetilde{X})^\Gamma
\end{tikzcd}$$
is commutative. Hence, the evaluation at $1$ maps give rise to a morphism $SC_{\psi_L} \rightarrow SC_{\psi}$, which we also denote by $\ev_1$.
\begin{definition}
The \textit{relative index map} $\mu^{\Gamma,\Lambda}$ is defined to be the composition
$$K_*(X,Y) \xrightarrow{\Ind_L^{\rel}} K_*(SC_{\psi_L}) \xrightarrow{(\ev_1)_*} K_*(SC_{\psi}).$$
\end{definition}
\begin{remark}
If $X$ is compact, the isomorphism $K_{*}(SC_{\psi}) \cong K_*(C^*_{\max}(\Gamma,\Lambda))$ allows us to consider $\mu^{\Gamma,\Lambda}$ as a map with values in the $K$-theory of the relative group $C^*$-algebra.
\end{remark}
\begin{remark}
We note that the map $\psi$ is first defined at the level of algebraic Roe algebras. The latter map is continuous only if the algebraic Roe algebras are endowed with a suitable norm (for example the maximal norm). This is the technical reason for the use of maximal completion by Chang, Weinberger and Yu.
\end{remark}
\begin{remark}
Instead of the maximal completion of the group rings and the Roe algebras, one can consider the quotient completion introduced in \cite{SS18} and obtain a similar relative index map. If the group homomorphism $\varphi: \Lambda \rightarrow \Gamma$ is injective, then one can also use the reduced completion of the group rings and Roe algebras. From now on we will only make use of the quotient completion and will most of the time drop the subscript indicating the chosen completion.
\end{remark}
Analogous to the absolute case the relative index map fits into a long exact sequence. The map $\psi_L$ gives rise, by restriction, to a map $\psi_{L,0}:C^*_{L,0}(\widetilde{Y})^\Lambda \rightarrow C^*_{L,0}(\widetilde{X})^\Gamma$. We have a short exact sequence of $C^*$-algebras
$$0\rightarrow SC_{\psi_{L,0}} \rightarrow SC_{\psi_L} \xrightarrow{\ev_1} SC_{\psi}\rightarrow 0,$$
which gives rise to a long exact sequence of $K$-theory groups
$$\cdots \rightarrow K_*(SC_{\psi_{L,0}}) \rightarrow K_*(X,Y) \xrightarrow{\mu^{\Gamma,\Lambda}} K_*(SC_\psi) \rightarrow \cdots .$$
\begin{remark}
Similarly, one defines maps $C^*_{(L)}(\widetilde{Y};\Cl_n)^{\Lambda} \rightarrow C^*_{(L)}(\widetilde{X};\Cl_n)^{\Gamma}$, which we will also denote by $\psi_{(L)}$.
\end{remark}
\subsubsection{The Relative Index of Dirac Operators on Manifolds with Boundary}
Given a compact spin manifold $M$ with boundary $N$ with a metric on $M$ which is collared at the boundary, consider the manifold $M_{\infty}$ obtained by attaching $N_{\infty} \coloneqq N\times[0,\infty)$ to $M$ along $N$. Extend the metric on $M$ to a metric on $M_{\infty}$ using the product metric on the half-cylinder (the metric on $\reals_+$ is the usual one). Denote by $[\slashed{D}_{M_{\infty}}]$ the fundamental class of the Dirac operator on $M_{\infty}$ in $K_*(M_{\infty})$. Given a map $f:(M,N) \rightarrow (B\Gamma, B\Lambda)$, the construction of the previous section gives rise to a relative index map $\mu^{\Gamma,\Lambda}: K_*(M,N) \rightarrow K_*(C^*(\Gamma, \Lambda))$.
\begin{definition}
The relative index of the Dirac operator on $M$ is defined to be the image of $[\slashed{D}_{M_{\infty}}]$ under the composition
$$K_*(M_{\infty}) \xrightarrow{} K_*(M_{\infty},N_\infty) \xrightarrow{\cong} K_*(M,N) \xrightarrow{\mu^{\Gamma,\Lambda}} K_*(C^*(\Gamma, \Lambda)).$$
where the isomorphism $K_*(M_{\infty},N_\infty) \xrightarrow{\cong} K_*(M,N)$ is given by excision.
\end{definition}
The nonvanishing of the relative index obstructs the existence of positive scalar curvature metrics on $M$.
\begin{proposition}[\cite{CWY}*{Proposition 2.18},\cite{SS18}*{Theorem 5.1},\cite{DG}*{Theorem 4.12}]
If there exists a positive scalar curvature metric on $M$ which is collared at the boundary, then the relative index of the Dirac operator on $M$ vanishes.
\end{proposition}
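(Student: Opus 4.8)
The plan is to exhibit a lift of the relative fundamental class to a relative structure group and then invoke exactness. Working in the spirit of the paper, I would pass to the cylindrical-end manifold $M_\infty$ and the variant Roe algebras: by the commuting diagram announced in the introduction, the relative index of $\slashed{D}_M$ is the image of $[\slashed{D}_{M_\infty}]$ under
$$K_*(C^*_{L}(\widetilde{M_\infty})^{\Gamma,\reals_+,\Lambda})\xrightarrow{(\ev_1)_*}K_*(C^*(\widetilde{M_\infty})^{\Gamma,\reals_+,\Lambda})\longrightarrow K_*(C^*(\Gamma,\Lambda)),$$
so it suffices to show that $[\slashed{D}_{M_\infty}]$ lifts along $K_*(C^*_{L,0}(\widetilde{M_\infty})^{\Gamma,\reals_+,\Lambda})\to K_*(C^*_{L}(\widetilde{M_\infty})^{\Gamma,\reals_+,\Lambda})$, using the long exact sequence displayed in the introduction. (One can run the same argument with the relative Roe algebra of the noncompact pair $(M_\infty,N_\infty)$ of \cite{CWY}, or with the mapping cone $SC_{\psi_{L,0}}$ after the relative-to-absolute identifications of Section 3; the analytic content is identical.)

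The analytic input is a \emph{spectral gap}. Since the given metric on $M$ is collared near $N$, its restriction to $N$ has positive scalar curvature (a flat normal direction does not change scalar curvature), so the metric on $M_\infty=M\cup_N N\times[0,\infty)$, extended by the product metric on the end, is complete with uniformly positive scalar curvature $\kappa\ge c>0$ — the bound on the compact piece $M$ by compactness, the bound on the end by the positive infimum of the scalar curvature of $N$. The Schrödinger-Lichnerowicz formula $\slashed{D}_{M_\infty}^2=\nabla^*\nabla+\tfrac{\kappa}{4}$ then gives $\slashed{D}_{M_\infty}^2\ge\tfrac{c}{4}$, so $\slashed{D}_{M_\infty}$ is invertible with $\operatorname{spec}(\slashed{D}_{M_\infty})\cap(-\epsilon,\epsilon)=\varnothing$ for $\epsilon:=\sqrt{c}/2$.

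Now I would build the lift. The class $[\slashed{D}_{M_\infty}]$ is represented by $\varphi_{\slashed{D}_{M_\infty}}\colon\mathcal{S}\to C^*_{L}(\widetilde{M_\infty};\Cl_n)^{\Gamma,\reals_+,\Lambda}$, $f\mapsto\big(t\mapsto f(\tfrac{1}{t}\slashed{D}_{M_\infty})\big)$. Using the homotopy equivalence $\mathcal{S}(-\epsilon,\epsilon)\hookrightarrow\mathcal{S}$ I may take the source to be $\mathcal{S}(-\epsilon,\epsilon)$; then for $f$ supported in $(-\epsilon,\epsilon)$ one has $\ev_1\,\varphi_{\slashed{D}_{M_\infty}}(f)=f(\slashed{D}_{M_\infty})=0$ by the spectral gap, so $\varphi_{\slashed{D}_{M_\infty}}$ lands in the structure algebra $C^*_{L,0}(\widetilde{M_\infty};\Cl_n)^{\Gamma,\reals_+,\Lambda}$. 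This is the higher rho class of the positive scalar curvature metric; reverting the source to $\mathcal{S}$, i.e.\ composing with $C^*_{L,0}\hookrightarrow C^*_{L}$, shows that it maps to $[\slashed{D}_{M_\infty}]$. By exactness, the image of $[\slashed{D}_{M_\infty}]$ under the subsequent map to $K_*(C^*(\Gamma,\Lambda))$ — which is the relative index — vanishes.

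The main obstacle is not this last step but the compatibilities it rests on, which have to be imported from the rest of the paper: that $C^*_{L}(\widetilde{M_\infty})^{\Gamma,\reals_+,\Lambda}$ computes $K_*(M,N)$, that the evaluation maps fit into both the displayed exact sequence and the commuting square with the relative Higson-Roe sequence, and that under these identifications $[\slashed{D}_{M_\infty}]$ corresponds to the relative fundamental class. One must also check that $\varphi_{\slashed{D}_{M_\infty}}$ genuinely takes values in the variant algebras, i.e.\ that $f(\tfrac{1}{t}\slashed{D}_{M_\infty})$ is asymptotically invariant in the cylindrical direction with the prescribed evaluation at infinity — this is where the product structure of the metric on the end enters, making $\slashed{D}_{M_\infty}$ translation-invariant there and identifying the evaluation at infinity of $f(\tfrac{1}{t}\slashed{D}_{M_\infty})$ with the corresponding functional calculus of the (likewise invertible) Dirac operator of $N_\infty$. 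Granting these, the vanishing is immediate.
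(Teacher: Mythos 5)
Your proposal is correct and follows essentially the same route as the paper's own argument: the paper defines the $(\Gamma,\Lambda)$-rho-invariant via the spectral gap and the homotopy equivalence $\mathcal{S}(-\frac{\sqrt{\epsilon}}{4},\frac{\sqrt{\epsilon}}{4})\hookrightarrow\mathcal{S}$, deduces the vanishing of the $(\Gamma,\Lambda)$-index from exactness (Proposition \ref{prop:vanishing}), and then transfers this to the relative index via the compatibility of fundamental classes (Lemma \ref{lemma:comparefundclasses} and Proposition \ref{prop:maptoCWY}). The compatibilities you flag as "imported" are precisely what those results establish, so nothing is missing.
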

\section{Coarse Spaces with Cylindrical Ends}
Let $X$ be a locally compact metric space with a free and proper action of a discrete group $\Gamma$ by isometries. For a $\Gamma$-invariant subset $Y$ of $X$ we can endow $Y \times \reals$ with a $\Gamma$-action by setting $\gamma(y,t) = (\gamma y,t)$. 
\begin{definition}
Let $X$ and $Y$ be as above. The space $X$ is said to have a cylindrical end with base $Y$ if there exists a $\Gamma$-equivariant isometry $\iota: Y \times [0,\infty) \rightarrow X$ satisfying
\begin{itemize}
\item $\iota((y,0)) = y$
\item $\lim_{R\rightarrow \infty} \dist(\iota(Y\times [R,\infty)),X - Y_{\infty}) = \infty$.
\end{itemize}
Here $Y_\infty$ denotes $\iota(Y\times[0,\infty))$ and $Y\times [0,\infty)$ is endowed with the product metric.
\label{def:cylend}
\end{definition}
\begin{definition}
 Let $(X,Y,\iota)$ and $(X^\prime,Y^\prime,\iota^\prime)$ be spaces with cylindrical ends. A map $f:X \rightarrow X^\prime$ is called a coarse map of spaces with cylindrical ends if it is a coarse map and satisfies
 \begin{itemize}
 \item $f(X\setminus Y_\infty) \subset X^\prime \setminus Y^\prime_{\infty}$ and
 \item $f(\iota(y,t)) = \iota^\prime(g(y),t)$ with $g\coloneqq f|_Y$.
 \end{itemize}
 \label{def:cylendmaps}
\end{definition}
\subsection{Roe Algebras for Spaces with Cylindrical Ends}
Using the isometry $\iota$ one can define an action of $\reals_+$ on $C_0(Y_\infty)$ by setting $L_s(f)(\iota((y,t))) = f(\iota(y,t-s))$ for $t \geq s$ and $L_s(f)(\iota((y,t))) = 0 $ otherwise. We would like to define a variant of Roe algebras for spaces with cylindrical ends. In order to do this we use modules which are equipped with an action of $\reals_+$ by partial isometries, which is compatible with the action of $\reals_+$ on $C_0(Y_\infty)$. Before making this precise we introduce some notation. Let $H_Y$ be a $Y$-module. The Hilbert space $L^2(\reals_+;H_Y)$ can be endowed with the structure of $Y_\infty$-module in a natural way. On $L^2(\reals_+;H_Y)$ one can define a family of partial isometries $P^{\st}_s$ by $P^{\st}_s(f)(t) = f(t-s)$ for $t \geq s$ and $P^{\st}_s(f)(t) = 0$ otherwise.
\begin{definition}
Let $(X,Y,\iota)$ be a space with cylindrical end. A Hilbert space is called an $X$-module tailored to the end if there is a tuple $(\rho,U, \{P_s\}_{s\in \mathbb{R}_+})$ satisfying the following properties:
\begin{itemize}
 \item $(\rho,U)$ is a covariant ample representation of $C_0(X)$ on $H$.
 \item $P_s$ is a strongly continuous family of partial isometries on $H$ satisfying
\begin{itemize}
\item $P_{-s} = P_s^*$
\item $P_{s}^*P_{s} = \widetilde{\rho}(\chi_{\iota(Y\times[0,\infty))})$ for all $s > 0$
\item $P_{s}P_{s}^* = \widetilde{\rho}(\chi_{\iota(Y\times[s,\infty))})$ for all $s>0$
\item $\rho(f)P_s = P_s\rho(L_s(f))$ for all $f \in C_0(Y_\infty)$.
\end{itemize}
\item For some $Y$-module $H_Y$, there is a $\Gamma$-equivariant unitary: $W:\chi_{Y_\infty}H \rightarrow L^2(\reals_+;H_Y)$ which covers the identity and satisfies $WP_s = P^{\st}_sW$. 
\end{itemize}
Here, the tuple $(\rho,U,\{P_s\})$ is part of the structure of the $X$-module and $\widetilde{\rho}$ is the extension of the representation $\rho$ to the bounded Borel functions.
\label{def:xmodttte}
\end{definition}
Similarly, one can define $\Cl_n$-linear modules tailored to the end. The following definitions generalise in an obvious manner to the $\Cl_n$-linear context.
In the rest of the section $(X,Y,\iota)$ will be a space with cylindrical end (endowed with a $\Gamma$-action) and $H$ will denote an $X$-module tailored to the end. We will construct a variant of Roe algebras for spaces with cylindrical ends. Since $H$ is in particular an $X$-module, it can be used to construct the usual equivariant algebraic Roe algebra $\reals(X)^\Gamma$.
\begin{definition}
An operator $T \in L(H)$ is called \textit{asymptotically $\reals_+$-invariant} if
$$\lim_{R \to \infty} \sup_{s>0}||(P_{-s}TP_s - T)\chi_{\iota(Y\times[R,\infty))}|| = 0.$$
\end{definition}
\begin{lemma}
The set of operators in $\reals(X)^\Gamma$, which are asymptotically $\reals_+$-invariant is a $*$-subalgebra.
\end{lemma}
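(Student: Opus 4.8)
The plan is to verify the three defining closure properties of a $*$-subalgebra directly: closure under linear combinations, under products, and under taking adjoints. Write $\mathcal{A}$ for the set of operators in $\reals(X)^\Gamma$ that are asymptotically $\reals_+$-invariant. For each property I would reduce the statement to a uniform estimate on the ``error'' $P_{-s}TP_s - T$ compressed to the far end $\chi_{\iota(Y\times[R,\infty))}$, and exploit that $P_s$ and $P_{-s}=P_s^*$ are partial isometries (hence have norm $\le 1$) together with the relation $P_s^*P_s = \widetilde{\rho}(\chi_{\iota(Y\times[0,\infty))})$, which is the identity on the relevant subspace.

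First, linearity is immediate, since $P_{-s}(\lambda S + \mu T)P_s - (\lambda S + \mu T) = \lambda(P_{-s}SP_s - S) + \mu(P_{-s}TP_s - T)$, and the supremum over $s$ of a sum is bounded by the sum of the suprema, so the limit as $R\to\infty$ is $0$. For adjoints, I would take adjoints inside the norm: $(P_{-s}TP_s - T)^* = P_s^* T^* P_{-s}^* - T^* = P_{-s}T^*P_s - T^*$ (using $P_{-s}=P_s^*$), and then note that $\chi_{\iota(Y\times[R,\infty))}$ is a self-adjoint projection which, being a Borel function supported in $Y_\infty$, commutes with $P_s$ and $P_{-s}$ in the appropriate sense via the intertwining relation $\rho(f)P_s = P_s\rho(L_s(f))$; the shift $L_s$ moves $\chi_{\iota(Y\times[R,\infty))}$ to a characteristic function of an even farther sub-half-cylinder, so the compressed error for $T^*$ at level $R$ is controlled by the compressed error for $T$ at level $R$ (or $R-s$, which only helps as $R\to\infty$ uniformly in $s$). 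The one point to be careful about here is the bookkeeping of which half-cylinder the projection lands on after conjugating by $P_s$; I would handle this by inserting $\chi_{\iota(Y\times[R,\infty))} = P_{-s}P_s\,\chi_{\iota(Y\times[R,\infty))}$ for $s$ small relative to $R$ and pushing the projection through.

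For products, given $S,T\in\mathcal{A}$, I would use the telescoping identity
$$
P_{-s}(ST)P_s - ST = (P_{-s}SP_s - S)\,P_{-s}TP_s + S\,(P_{-s}TP_s - T),
$$
valid on $\chi_{Y_\infty}H$ because $P_sP_{-s}$ is a projection that acts as the identity on a cofinite sub-half-cylinder. Compressing to $\chi_{\iota(Y\times[R,\infty))}$: in the second summand the error $(P_{-s}TP_s-T)$ already carries the compression if I restrict attention to the end (using that $S$ has finite propagation, so $S$ maps vectors supported on $[R,\infty)$ into vectors supported on $[R-\prop S,\infty)$, hence $S\,\chi_{\iota(Y\times[R,\infty))}$ is, up to the far-end region, equal to $\chi_{\iota(Y\times[R-\prop S,\infty))}\,S\,\chi_{\iota(Y\times[R,\infty))}$); the finite-propagation bound is exactly what lets one absorb the shift in $R$. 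In the first summand, $\|P_{-s}TP_s\|\le\|T\|$ uniformly, so it suffices that $\|(P_{-s}SP_s-S)\chi_{\iota(Y\times[R,\infty))}\|$, pre-composed with $P_s$ and post-composed with $T$ — again using finite propagation of $T$ to relocate the compression — tends to $0$. The main obstacle, and the only genuinely technical point, is precisely this interplay between finite propagation and the $P_s$-conjugation: one must check that compressing a product to the far end can be replaced, up to an error controlled by asymptotic invariance, by compressing each factor to a slightly nearer part of the end, uniformly in $s$. Once that lemma is in hand, the three closure properties follow by the estimates above, and since $\reals(X)^\Gamma$ is already a $*$-algebra, $\mathcal{A}$ is a $*$-subalgebra.
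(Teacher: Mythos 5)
Your overall strategy is the same as the paper's: insert $P_sP_{-s}$ between the two factors, telescope, and use finite propagation to trade the cutoff $\chi_{\iota(Y\times[R,\infty))}$ for a cutoff at a slightly smaller level. Your product argument is essentially the paper's argument: the identity $(P_{-s}STP_s)\chi_R=(P_{-s}SP_sP_{-s}TP_s)\chi_R$ holds for $R>\prop T$ because the image of $TP_s\chi_R$ lies in $\iota(Y\times[R+s-\prop T,\infty))\subset\iota(Y\times[s,\infty))$, and then the support of the image of $P_{-s}TP_s\chi_R$ lets you insert $\chi_{R-\prop T}$ in front of it, so the first summand of your telescoping is $E_{R-\prop T,s}(S)\,P_{-s}TP_s\chi_R$ and both summands vanish uniformly in $s$ as $R\to\infty$. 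The "interplay between finite propagation and $P_s$-conjugation" that you flag as the remaining technical point is exactly this support computation, and it goes through.

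The adjoint step, however, is not correct as written. Two of your statements would break the uniformity in $s$ that the definition demands: (i) you cannot restrict to ``$s$ small relative to $R$,'' since the supremum is over all $s>0$ (note in any case that $P_{-s}P_s=\chi_{\iota(Y\times[0,\infty))}$, so that insertion is valid for every $s$; it is $P_sP_{-s}=\chi_{\iota(Y\times[s,\infty))}$ that would require $s\leq R$); and (ii) a bound by the compressed error of $T$ at level $R-s$ does \emph{not} ``only help'': for $s>R$ it degenerates to $\norm{P_{-s}TP_s-T}$, which need not be small, so such a bound would be useless. The correct mechanism — the one the paper uses — is finite propagation, not the shift by $s$: writing $R_0=\prop T$, one has $(P_{-s}T^*P_s-T^*)\chi_R=\bigl(\chi_R(P_{-s}TP_s-T)\bigr)^*$, and since both $T$ and $P_{-s}TP_s$ kill the difference between $\chi_{R-R_0}$ and $1$ when followed by $\chi_R$ (the conjugation by $P_{\pm s}$ shifts the level by $+s$ and then $-s$, so the $s$-dependence cancels), this equals $\bigl(\chi_R(P_{-s}TP_s-T)\chi_{R-R_0}\bigr)^*=\bigl(\chi_R E_{R-R_0,s}(T)\bigr)^*$. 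The relevant level is thus $R-R_0$ with $R_0$ a \emph{fixed} constant independent of $s$, which is what makes the estimate uniform. With that correction the proof is complete and coincides with the paper's.
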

\begin{proof}
Let $S,T \in \reals(X)^\Gamma$ be asymptotically $\reals_+$-invariant. Set $R_0 \coloneqq \prop T$. In the following $\chi_R$ will denote $\chi_{\iota(Y\times[R,\infty))}$. Since $P_sP_{-s} = \chi_s$ (for all $s>0$) and elements in the image of $TP_s\chi_R$ are supported in $\iota(Y\times[R-R_0+s,\infty))$ we have for $R > R_0$
$$(P_{-s}STP_s)\chi_R =(P_{-s}SP_sP_{-s}TP_s)\chi_R.$$
Furthermore, since elements in the image of $(P_{-s}TP_{s})\chi_R$ are supported in $\iota(Y\times[R-R_0,\infty))$ we have
$$(P_{-s}SP_sP_{-s}TP_s)\chi_R = (P_{-s}SP_s\chi_{R-R_0}P_{-s}TP_s)\chi_R.$$ From the asymptotic $\reals_+$-invariance, it follows that $P_{-s}SP_s\chi_{R-R_0} = S\chi_{R-R_0} + E_{R-R_0,s}(S)$ and $P_{-s}TP_s\chi_R = T\chi_R + E_{R,s}(T)$ with
\[
\lim_{R\to\infty}\sup_{s>0}||E_{R-R_0,s}(S)|| = 0 = \lim_{R\to\infty}\sup_{s>0}||E_{R,s}(T)||
\label{eq:lims}\tag{$*$}.
\] Therefore $(P_{-s}STP_s - ST)\chi_R$ is equal to
$$S\chi_{R-R_0}T\chi_R + S\chi_{R-R_0}E_{R,s}(T) + E_{R-R_0,s}(S)T\chi_R + E_{R-R_0,s}(S)E_{R,s}(T) - ST\chi_R$$
$$=S\chi_{R-R_0}E_{R,s}(T) + E_{R-R_0,s}(S)T\chi_R + E_{R-R_0,s}(S)E_{R,s}(T).$$
The latter equality and \eqref{eq:lims} imply that $ST$ is asymptotically $\reals_+$-invariant.
We now show that $T^*$ is also asymptotically $\reals_+$-invariant. We have
$$(P_{-s}T^*P_s - T^*)\chi_R= (\chi_R(P_{-s}TP_s -T))^*.$$
Furthermore, since the propagation of $T$ is $R_0$ the right hand side is equal to $(\chi_R(P_{-s}TP_s -T)\chi_{R-R_0})^* = (\chi_RE_{R-R_0,s}(T))^*$. This shows that $T^*$ is asymptotically $\reals_+$-invariant. The fact that the set of asymptotically $\reals_+$-invariant operators is closed under addition is clear.
\end{proof}
\begin{definition}
The equivariant algebraic Roe algebra of $X$ tailored to the end is the $*$-subalgebra of $\reals(X)^\Gamma$ consisting of asymptotically $\reals_+$-invariant operators. It will be denoted by $\reals(X)^{\Gamma,\reals_+}$. The equivariant Roe algebra of $X$ tailored to the end is the closure of $\reals(X)^{\Gamma,\reals_+}$ in $C^*_{(d)}(X)^\Gamma$ and will be denoted by $C^*_{(d)}(X)^{\Gamma,\reals_+}$. Similarly, using a $\Cl_n$-module tailored to the end, one defines $\reals(X;\Cl_n)^{\Gamma,\reals_+}$ and $C^*_{(d)}(X;\Cl_n)^{\Gamma,\reals_+}$.
\end{definition}
\begin{remark}
Note that the algebraic and $C^*$-algebraic Roe algebras defined above depend, a priori, on the chosen modules tailored to the end. We will see later, that the $K$-theory groups of the $C^*$-algebras defined using different modules are canonically isomorphic.
\end{remark}
\begin{remark}
 The equivariant Roe algebra of $X$ tailored to the end obtained by using the quotient completion will simply be denoted by $C^*(X)^{\Gamma,\reals_+}$. In the following we will only make use of the quotient completion; however, most of the results are also valid for the reduced and maximal completions. 
\end{remark}
Let $(X^\prime,Y^\prime,\iota^\prime)$ be another space with a cylindrical end and $H^\prime$ an $X^\prime$-module tailored to the end given by the data $(\rho^\prime,U^\prime, \{P^\prime_s\})$.
\begin{definition}
 Let $f: X \rightarrow X^\prime$ be a map of spaces with cylindrical ends. An isometry $V:H \rightarrow H^\prime$ is said to cover $f$ if it covers $f$ in the sense of \cite{HR}*{Definition 6.3.9} and satisfies $VP_s = P^\prime_sV$.
 \label{def:acylcovisom}
 \end{definition}
\begin{lemma}
 Let $f$ and $V$ be as in Definition \ref{def:covisom}. Then $T \rightarrow VTV^*$ defines a map $C^*(X)^{\Gamma,\reals_+} \rightarrow C^*(X^\prime)^{\Gamma,\reals_+}$.
\end{lemma}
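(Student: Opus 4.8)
The plan is to reduce to the known statement for ordinary equivariant Roe algebras and then verify separately that the defining condition of $C^*(X)^{\Gamma,\reals_+}$ is preserved. First, by \cite{HR}*{Lemma~6.3.11} the assignment $T\mapsto VTV^*$ is a $*$-homomorphism $\reals(X)^\Gamma\to\reals(X^\prime)^\Gamma$ — it is multiplicative because $V^*V=\Id$ — and it extends to a $*$-homomorphism $C^*(X)^\Gamma\to C^*(X^\prime)^\Gamma$. Since $C^*(X)^{\Gamma,\reals_+}$ and $C^*(X^\prime)^{\Gamma,\reals_+}$ are by definition the closures of $\reals(X)^{\Gamma,\reals_+}$ and $\reals(X^\prime)^{\Gamma,\reals_+}$ inside $C^*(X)^\Gamma$ and $C^*(X^\prime)^\Gamma$, it will be enough to show that $T\mapsto VTV^*$ sends asymptotically $\reals_+$-invariant operators to asymptotically $\reals_+$-invariant ones; continuity of the extension then yields the asserted homomorphism on the completions.

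To this end I would start from a purely algebraic identity. Writing $\chi^\prime_R$ for the projection onto the subspace of $H^\prime$ consisting of vectors supported in $\iota^\prime(Y^\prime\times[R,\infty))$, and $\chi_R$ for the analogous projection on $H$, the covering relation $VP_s=P^\prime_sV$ (together with its adjoint $P_{-s}V^*=V^*P^\prime_{-s}$, for all $s$) gives
$$P^\prime_{-s}(VTV^*)P^\prime_s=(P^\prime_{-s}V)\,T\,(V^*P^\prime_s)=V(P_{-s}TP_s)V^*,$$
whence
$$\bigl(P^\prime_{-s}(VTV^*)P^\prime_s-VTV^*\bigr)\chi^\prime_R=V\bigl(P_{-s}TP_s-T\bigr)V^*\chi^\prime_R.$$

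The crux is then a geometric estimate allowing the cut-off $\chi^\prime_R$ on $X^\prime$ to be traded for a cut-off on $X$. Pick $R_0>0$ with $d_{X^\prime}(f(x),x^\prime)\le R_0$ for all $(x^\prime,x)\in\Support(V)$. I claim that there is $R_1>0$ such that $\chi^\prime_RV=\chi^\prime_RV\chi_{R-R_0}$ — equivalently $V^*\chi^\prime_R=\chi_{R-R_0}V^*\chi^\prime_R$ — for every $R\ge R_1$. To see this, take $(x^\prime,x)\in\Support(V)$ with $x^\prime=\iota^\prime(y^\prime,t^\prime)$ and $t^\prime\ge R$. If $x\notin Y_\infty$, then $f(x)\in X^\prime\setminus Y^\prime_\infty$ because $f$ is a map of spaces with cylindrical ends, so $d_{X^\prime}(f(x),x^\prime)\ge\dist(X^\prime\setminus Y^\prime_\infty,\iota^\prime(Y^\prime\times[R,\infty)))$, which exceeds $R_0$ once $R$ is large enough by Definition~\ref{def:cylend} — contradicting the choice of $R_0$. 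Hence $x=\iota(y,t)$ for some $t\ge0$, and then $f(x)=\iota^\prime(g(y),t)$ with $g=f|_Y$; since $\iota^\prime$ is distance-preserving, $R_0\ge d_{X^\prime}(f(x),x^\prime)\ge|t-t^\prime|\ge t^\prime-t$, so $t\ge R-R_0$. Thus for $R\ge R_1$ no point of $\Support(V)$ has first coordinate in $\iota^\prime(Y^\prime\times[R,\infty))$ and second coordinate outside $\iota(Y\times[R-R_0,\infty))$, which is precisely the claimed identity of operators.

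Putting these together, and using that $V$ is an isometry (so $\|VA\|=\|A\|$) and $\|V^*\chi^\prime_R\|\le1$, I would conclude that for $R\ge R_1$
$$\sup_{s>0}\bigl\|\bigl(P^\prime_{-s}(VTV^*)P^\prime_s-VTV^*\bigr)\chi^\prime_R\bigr\|\le\sup_{s>0}\bigl\|(P_{-s}TP_s-T)\chi_{R-R_0}\bigr\|,$$
and the right-hand side tends to $0$ as $R\to\infty$ because $T$ is asymptotically $\reals_+$-invariant (the constant shift of $R$ by $R_0$ being immaterial). Hence $VTV^*$ is asymptotically $\reals_+$-invariant, and combined with the first paragraph this proves the lemma. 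The step I expect to be the main obstacle is the geometric estimate: it is exactly there that one must use that $f$ respects the cylindrical decomposition, that $V$ moves points a bounded distance, and that the end $Y^\prime_\infty$ recedes metrically from $X^\prime\setminus Y^\prime_\infty$ in the sense of Definition~\ref{def:cylend}.
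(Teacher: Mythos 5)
Your proposal is correct and follows the paper's overall skeleton: invoke \cite{HR}*{Lemma 6.3.11} for the map $C^*(X)^\Gamma \rightarrow C^*(X^\prime)^\Gamma$, then show that conjugation by $V$ preserves asymptotic $\reals_+$-invariance on the dense subalgebra $\reals(X)^{\Gamma,\reals_+}$ via the identity $\bigl(P^\prime_{-s}VTV^*P^\prime_s - VTV^*\bigr)\chi^\prime_R = V\bigl(P_{-s}TP_s - T\bigr)V^*\chi^\prime_R$. Where you diverge is in the one remaining step, trading the cut-off $\chi^\prime_R$ on $H^\prime$ for a cut-off on $H$. You do this geometrically: you use that $V$ covers $f$ with displacement bounded by $R_0$, that $f$ respects the cylindrical decomposition, and that the end recedes from its complement (Definition \ref{def:cylend}), to conclude $V^*\chi^\prime_R = \chi_{R-R_0}V^*\chi^\prime_R$ for $R$ large. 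The paper instead observes that the cut-off projections are themselves built from the partial isometries --- $\widetilde{\rho}(\chi_{\iota(Y\times[R,\infty))}) = P_RP_R^*$ by Definition \ref{def:xmodttte} --- so the intertwining relation $VP_s = P^\prime_sV$ alone yields $V^*\chi^\prime_R = \chi_RV^*$ exactly, for every $R$, with no support estimate and no shift by $R_0$. Your route is sound (and has the mild virtue of only using the metric covering condition for this step), but it is strictly more work: the axiom $P_sP_s^* = \widetilde{\rho}(\chi_{\iota(Y\times[s,\infty))})$ was designed precisely so that arguments like this one reduce to algebra. One small point to make explicit if you write this up: your opening identity needs the intertwining relation for negative $s$ as well (equivalently $VP_s^* = (P^\prime_s)^*V$), which is the natural reading of Definition \ref{def:acylcovisom} but is worth stating.
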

\begin{proof}
 The fact that conjugation by $V$ gives a map $C^*(X)^\Gamma \rightarrow C^*(X^\prime)^\Gamma$ is the content of \cite{HR}*{Lemma 6.3.11}. We show that if $T \in \reals(X)^\Gamma$ is asymptotically $\reals_+$-invariant, then so is $VTV^*$. In the following $\widetilde{\rho}$ and $\widetilde{\rho}^\prime$ will denote the extension of $\rho$ and $\rho^\prime$ to the bounded Borel functions on $X$ and $X^\prime$ respectively. Using the fact that $V$ intertwines the families $\{P_s\}$ and $\{P^\prime_s\}$ we get 
 $$(P^\prime_{-s}VTV^*P^\prime_s - VTV^*) \rho^\prime(\chi_R) = V(P_{-s}TP_s -T)V^*\rho^\prime(\chi_R)=$$
 $$V(P_{-s}TP_s -T)V^*P^\prime_sP^\prime_{-s} = V(P_{-s}TP_s -T)P_sP_{-s}V^* = V(P_{-s}TP_s -T)\widetilde{\rho}(\chi_R)V^*,$$
 which proves the claim.
\end{proof}

\begin{proposition}
 Let $f:X \rightarrow X^\prime$ be a map of spaces with cylindrical ends. Then there is an isometry $V: H \rightarrow H^\prime$ which covers $f$. Conjugation by $V$ induces a homomorphism $K_*(C^*(X)^{\Gamma,\reals_+})\rightarrow K_*(C^*(X^\prime)^{\Gamma,\reals_+})$ which does not depend on the choice of the covering isometry $V$. In particular, $K_*(C^*(X)^{\Gamma,\reals_+})$ does not depend on the choice of the $X$-module tailored to the end up to a canonical isomorphism.
 \label{prop:funcroealgttte}
\end{proposition}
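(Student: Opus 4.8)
The plan is to imitate the classical argument behind Proposition~\ref{prop:funcclasroealg} (i.e.\ \cite{HR}*{Proposition 6.3.12}), carrying along the extra structure $\{P_s\}$ at every stage. There are three things to establish: (1)~existence of an isometry $V\colon H\to H'$ which covers $f$ \emph{and} intertwines $\{P_s\}$ with $\{P_s'\}$; (2)~independence of the induced $K$-theory map on the choice of such $V$; and (3)~the special case $X=X'$, $f=\iden$, which yields the canonical isomorphism between the Roe algebras built from different modules tailored to the end. Step~(1) is the crucial one; steps (2) and (3) will follow from (1) by a standard formal argument once it is available in the equivariant, end-compatible setting.

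For step~(1), I would first apply the classical existence result (\cite{HR}*{Proposition 6.3.12}, equivariant version) to the restricted map $g\coloneqq f|_Y\colon Y\to Y'$ to obtain a $\Gamma$-equivariant isometry $V_Y\colon H_Y\to H_{Y'}$ covering $g$, where $H_Y$ and $H_{Y'}$ are the $Y$- and $Y'$-modules supplied by the third bullet of Definition~\ref{def:xmodttte}. Then set $V_\infty\coloneqq W'^{*}\circ(\iden_{L^2(\reals_+)}\widehat{\otimes}V_Y)\circ W\colon\chi_{Y_\infty}H\to\chi_{Y'_\infty}H'$; because $W,W'$ conjugate $P_s,P_s'$ to the standard shift $P^{\st}_s$, and the shift commutes with $\iden\widehat{\otimes}V_Y$, the isometry $V_\infty$ automatically satisfies $V_\infty P_s=P_s'V_\infty$ on $\chi_{Y_\infty}H$, and it covers $f|_{Y_\infty}$ by construction since $f(\iota(y,t))=\iota'(g(y),t)$. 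It remains to extend $V_\infty$ over the rest of $X$. Here I would use that $f(X\setminus Y_\infty)\subset X'\setminus Y'_\infty$ together with the separation condition $\dist(\iota(Y\times[R,\infty)),X-Y_\infty)\to\infty$ from Definition~\ref{def:cylend}: choose a $\Gamma$-invariant decomposition of $X$ into $Y_\infty$ and a "core" part overlapping it in a bounded collar, build a covering isometry on the core part by the classical result, and patch the two isometries using a partition-of-unity/telescoping argument (as in the proof that covering isometries can be assembled locally), checking that the patching does not destroy finite propagation or the intertwining relation $VP_s=P_s'V$ outside a bounded neighbourhood of the collar—which is all that is needed, since asymptotic $\reals_+$-invariance only constrains behaviour near infinity. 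The compatibility $P^{\prime*}_sP^\prime_s=\widetilde{\rho}'(\chi_{\iota'(Y'\times[0,\infty))})$ ensures that the two pieces $V_\infty$ and $V_{\mathrm{core}}$ have orthogonal ranges where required, so their sum is again an isometry.

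For step~(2), given two isometries $V_0,V_1\colon H\to H'$ both covering $f$ and intertwining the $\{P_s\}$, the classical argument forms the isometry $V_0\oplus V_1$ into $H'\oplus H'$ and connects $\begin{pmatrix}V_0\\0\end{pmatrix}$ to $\begin{pmatrix}0\\V_1\end{pmatrix}$ by a norm-continuous path of isometries through a rotation in $M_2$; this path is $\Gamma$-equivariant, has uniformly bounded propagation, and — crucially — is built from scalar matrices times $V_0,V_1$, hence still intertwines the shift families, so it lies in (matrices over) the end-tailored Roe algebra. Conjugation by this path gives a homotopy of $*$-homomorphisms $C^*(X)^{\Gamma,\reals_+}\to M_2(C^*(X')^{\Gamma,\reals_+})$, proving the two induced $K$-theory maps agree. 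Step~(3) is the case $f=\iden$: applying (1) and (2) to $\iden$ and its "inverse" data gives mutually inverse maps on $K$-theory between the Roe algebras tailored to the end built from any two modules, establishing the final sentence.

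The main obstacle is the patching in step~(1): one must produce a single covering isometry that is honestly shift-intertwining on the cylindrical end while agreeing with an arbitrary covering isometry on the complement, and verify that the interpolation region (a bounded collar) contributes only finite-propagation error and—since it is bounded—does not interfere with the limit defining asymptotic $\reals_+$-invariance. I expect this to be routine given the separation axiom in Definition~\ref{def:cylend} and the explicit normal form for $\chi_{Y_\infty}H$ forced by the unitary $W$, but it is where the real work lies; everything after that is the standard Roe-algebra functoriality machinery applied verbatim.
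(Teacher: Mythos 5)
Your overall strategy is the one the paper uses: split $X$ into the cylindrical end and its complement, apply the classical covering-isometry result (Proposition \ref{prop:funcclasroealg}) to $f|_Y$ and to $f|_{X\setminus Y_\infty}$, transport the former to the end via the unitary $W$, and quote the standard rotation homotopy for independence of the choice. However, the step you single out as ``where the real work lies'' --- patching $V_\infty$ with a covering isometry on an overlapping core via a partition-of-unity argument --- is not needed and is not what the paper does. Since $P_s^*P_s=\widetilde{\rho}(\chi_{Y_\infty})$, the subspaces $\chi_{X\setminus Y_\infty}H$ and $\chi_{Y_\infty}H$ are orthogonal complements, so the paper simply sets $V=V_1\oplus(V_2\otimes\Id)$ with respect to the decompositions $H\cong \chi_{X\setminus Y_\infty}H\oplus(H_Y\otimes L^2(\reals_+))$ and its primed analogue. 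This $V$ covers $f$ because $f(X\setminus Y_\infty)\subset X'\setminus Y'_\infty$ and $f(\iota(y,t))=\iota'(g(y),t)$, and it intertwines $\{P_s\}$ with $\{P'_s\}$ because $P_s$ and $P'_s$ vanish on the complement pieces while on the end pieces the relation reduces, via $W$ and $W'$, to the shift $P^{\st}_s$ commuting with $\Id\otimes V_2$. A genuine partition-of-unity interpolation between isometries would in any case be delicate (convex combinations of isometries are not isometries), so it is fortunate that the disjointness of the decomposition makes it unnecessary; with that step replaced by the direct sum, your argument closes and coincides with the paper's.
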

\begin{proof}
 We prove the existence of an isometry covering $f$. The proof that the induced map on the $K$-theory groups by conjugation with $V$ does not depend on the choice of $V$ is the same as that of \cite{HR}*{Lemma 5.2.4}. We have $H \cong \chi_{X\setminus Y_\infty}H \oplus \chi_{Y_\infty}H \cong \chi_{X\setminus Y_\infty}H \oplus (H_Y\otimes L^2(\reals_+))$. Similarly $H^\prime \cong \chi_{X^\prime\setminus Y^\prime_\infty}H^\prime \oplus (H^\prime_{Y^\prime}\otimes L^2(\reals_+))$. By Proposition \ref{prop:funcclasroealg}, there are isometries $V_1: \chi_{X\setminus Y_\infty}H \rightarrow \chi_{X\setminus Y_\infty}H^\prime$ and $V_2: H_Y \rightarrow H^\prime_{Y^\prime}$ covering the restrictions of $f$ to $X\setminus Y_\infty$ and $Y$ respectively. We use the above decompositions of $H$ and $H^\prime$ and set $V = V_1\oplus(V_2\otimes \Id)$. Since the isomorphisms $\chi_{Y_\infty}H \cong H_Y\otimes L^2(\reals_+)$ and $\chi_{Y^\prime_\infty}H^\prime \cong H^\prime_{Y^\prime} \otimes L^2(\reals_+)$ cover the identity maps on $Y_\infty$ and $Y^\prime_\infty$ respectively, $V$, seen as an isometry from $H$ to $H^\prime$, covers $f$ in the sense of Definition \ref{def:covisom}. Furthermore, the latter isomorphisms intertwine the families $\{P_s\}$ and $\{P^\prime_s\}$ with the standard families of partial isometries $\{P^{\st}_s\}$ on $H_Y\otimes L^2(\reals_+)$ and $H_{Y^\prime}\otimes L^2(\reals_+)$, which implies that $V$ intertwines $\{P_s\}$ and $\{P^\prime_s\}$. Thus, $V$ covers $f$ in the sense of Definition \ref{def:acylcovisom}.
\end{proof}
One can also define localisation and structure algebras tailored to the end.
\begin{definition}
The \textit{equivariant localisation algebra} of $X$ tailored to the end is defined to be the $C^*$-subalgebra of $\mathfrak{T}C^*(X)^{\Gamma,\reals_+}$ generated by elements $f$ satisfying
\begin{itemize}
\item $\prop f(t) < \infty$ for all $t \in [1,\infty)$
\item $\prop f(t) \xrightarrow[]{t\rightarrow \infty} 0$.
\end{itemize}
It will be denoted by $C^*_{L}(X)^{\Gamma,\reals_+}$. The \textit{equivariant structure algebra} of $X$ is defined to be the subalgebra of $C^*_{L}(X)^{\Gamma,\reals_+}$ generated by $f$ which further satisfy $f(1) = 0$. It will be denoted by $C^*_{L,0}(X)^{\Gamma,\reals_+}$.
\end{definition}
\begin{remark}
One can also prove the existence of families of isometries covering a given map in a suitable sense and inducing maps between localisation and structure algebras tailored to the end. One can then deduce an analogue of Proposition \ref{prop:funcroealgttte} for structure and localisation algebras tailored to the end. These statements can be proved by using the approach of the proof of Proposition \ref{prop:funcroealgttte} and slight modifications of the proofs for the classical structure and localisation algebras.
\end{remark}
\subsection{Roe algebras for Cylinders}
One of our main goals in the following is to evaluate asymptotically $\reals_+$-invariant operators on a space $(X,Y,\iota)$ with cylindrical end and obtain $\reals$-invariant operators on the cylinder over $Y$. In this section we define a Roe algebra for cylinders which will be the target of the aforementioned ``evaluation at infinity map". In the following $Y$ will denote a locally compact separable metric space endowed with a free and proper action of a discrete group $\Gamma$ by isometries. Endow $Y \times \reals$ with the product metric. Furthermore, $L^\prime_s(f)(y,t) = f(y,t-s)$ defines an action of $\reals$ on $C_0(Y\times \reals)$. Let $H_Y$ be a $Y$-module. The space $L^2(\reals,H_Y)$ can then be endowed with the structure of a $Y \times \reals$-module. There is a family $\{Q^{\st}_s\}$ of unitaries on $L^2(\reals,H_Y)$ given by the shift of functions in the $\reals$-direction.
\begin{definition}
A Hilbert space $H$ is called a cylindrical $Y\times \reals$-module if there is a tuple $(\rho,U,\{Q_s\})$ satisfying the following properties:
\begin{itemize}
\item $(\rho,U)$ is a covariant ample representation of $C_0(Y\times \reals)$ on $H$.
\item $\{Q_s\}$ is a strongly continuous group of unitaries commuting with the representation $U$ of $\Gamma$ on $H$ and satisfying $\rho(f)Q_s = Q_s\rho(L^\prime_s(f))$.
\item For some $Y$-module $H_Y$, there is a unitary isomorphism $W: H \rightarrow L^2(\reals,H_Y)$ which covers the identity map of $Y \times \reals$ in the sense of Definition \ref{def:covisom}, intertwines the families $\{Q_s\}$ and $\{Q^{\st}_s\}$ and which does not shift the support of vectors in the $\reals$-direction.
\end{itemize}
\end{definition}
A cylindrical $Y\times \reals$-module is in particular a $Y\times \reals$-module and allows us to define the usual Roe algebras $\reals(Y\times \reals)^\Gamma$ and $C^*(Y \times \reals)^\Gamma$
\begin{definition}
An operator $T \in \reals(Y\times\reals)^\Gamma$ is called \textit{$\reals$-invariant} if
$$Q_{-s}TQ_s - T = 0$$
for all $s \in \reals$. The closure of the $*$-algebra of such elements in $C^*(Y\times \reals)^\Gamma$ will be denoted by $C^*(Y \times \reals)^{\Gamma\times\reals}$. Similarly, using a cylindrical $Y\times \reals$-$\Cl_n$-module, one defines $C^*(Y\times \reals;\Cl_n)^{\Gamma \times\reals}$.
\end{definition}

Now let $Y^\prime$ be another space. Let $f: Y \times \reals \rightarrow Y^\prime \times \reals$ be a coarse map, which is the suspension of a map $g : Y \rightarrow Y^\prime$. Let $H$ and $H^\prime$ be cylindrical $Y\times \reals$ and $Y^\prime \times \reals$-modules respectively.  
A slight modification of the proof of Proposition \ref{prop:funcroealgttte}, proves the following
\begin{proposition}
Let $f,H$ and $H^\prime$ be as above. There exists an isometry $V:H \rightarrow H^\prime$ which covers $f$ in the sense of Definition \ref{def:covisom} and intertwines the families $\{Q_s\}$ and $\{Q^\prime_s\}$. Conjugation by $V$ induces a homomorphism $K_*(C^*(Y\times \reals)^{\Gamma \times \reals}) \rightarrow  K_*(C^*(Y^\prime \times \reals)^{\Gamma \times \reals})$. The latter homomorphism is independent of the choice of the isometry $V$ satisfying the above properties. In particular, $K_*(C^*(Y\times \reals)^{\Gamma \times \reals})$ does not depend on the chosen cylindrical $Y \times \reals$-module.
\label{prop:funccylroealg}
\end{proposition}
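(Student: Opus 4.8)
The proof will be a variant of the proof of Proposition~\ref{prop:funcroealgttte}, with two simplifications: a cylindrical module carries no summand corresponding to an ``interior'' region, so all of $H$ is modelled on $L^2(\reals,H_Y)$, and the operators $\{Q_s\}$ are genuine unitaries rather than partial isometries. First I would produce the covering isometry. Since $g$ is itself a coarse map ($Y$ is a ``slice'' of $Y\times\reals$), Proposition~\ref{prop:funcclasroealg} supplies a $\Gamma$-equivariant isometry $V_2\colon H_Y\to H_{Y'}$ covering $g$. Writing $L^2(\reals,H_Y)\cong H_Y\otimes L^2(\reals)$, the isometry $V_2\otimes\Id_{L^2(\reals)}$ covers $f=g\times\iden_\reals$ (it does nothing in the $\reals$-direction, while the $\Support$-bound in the $Y$-direction comes from $V_2$ covering $g$) and commutes with the standard shifts $\{Q^{\st}_s\}$. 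Conjugating by the unitaries $W\colon H\to L^2(\reals,H_Y)$ and $W'\colon H'\to L^2(\reals,H_{Y'})$ from the definition of a cylindrical module --- which cover the identity, intertwine $\{Q_s\}$ and $\{Q'_s\}$ with $\{Q^{\st}_s\}$, and do not shift supports in the $\reals$-direction --- transports this to an isometry $V\colon H\to H'$ with the two required properties.

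Next I would check that $\ad(V)$ respects $\reals$-invariance. Taking adjoints in $VQ_s=Q'_sV$ and using unitarity of the $Q$'s gives $Q_sV^*=V^*Q'_s$ for all $s\in\reals$, so for an $\reals$-invariant $T$ one has $Q'_{-s}(VTV^*)Q'_s=V(Q_{-s}TQ_s)V^*=VTV^*$. Together with \cite{HR}*{Lemma 6.3.11}, which already gives a map $\ad(V)\colon C^*(Y\times\reals)^\Gamma\to C^*(Y'\times\reals)^\Gamma$, this shows $\ad(V)$ restricts to a $*$-homomorphism $C^*(Y\times\reals)^{\Gamma\times\reals}\to C^*(Y'\times\reals)^{\Gamma\times\reals}$, and hence induces a map on $K$-theory.

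For independence of the chosen $V$, I would run the rotation argument from the proof of \cite{HR}*{Lemma 5.2.4}: given two covering isometries $V_0,V_1$ that intertwine $\{Q_s\}$ with $\{Q'_s\}$, the path $V_\theta\coloneqq(\cos\theta)V_0\oplus(\sin\theta)V_1\colon H\to H'\oplus H'$, $\theta\in[0,\tfrac{\pi}{2}]$, is a norm-continuous path of isometries, each covering $f$ and each intertwining $\{Q_s\}$ with $\{Q'_s\oplus Q'_s\}$. Hence for $\reals$-invariant $T$ the path $\theta\mapsto V_\theta TV_\theta^*$ lies in $M_2\bigl(C^*(Y'\times\reals)^{\Gamma\times\reals}\bigr)$, giving a homotopy of $*$-homomorphisms between $\iota_{11}\circ\ad(V_0)$ and $\iota_{22}\circ\ad(V_1)$; since the corner inclusions $\iota_{11},\iota_{22}$ agree on $K$-theory, $\ad(V_0)_*=\ad(V_1)_*$. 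The only point that is not verbatim the classical argument --- and the step I would flag as the one needing care --- is precisely the verification that this rotation homotopy stays inside the $\reals$-invariant subalgebra, but this is immediate from the intertwining property just noted.

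Finally, the independence from the cylindrical module is obtained by applying the above to $f=\iden_{Y\times\reals}$: between any two cylindrical $Y\times\reals$-modules there are covering isometries in both directions, and since $\ad(V'V)=\ad(V')\circ\ad(V)$ already at the level of operators while $\ad$ of a covering isometry for $\iden$ induces the identity on $K$-theory, the uniqueness just established forces $\ad(V)_*$ to be an isomorphism, canonically so.
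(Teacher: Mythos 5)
Your proposal is correct and follows exactly the route the paper intends: the paper's entire "proof" is the remark that Proposition \ref{prop:funccylroealg} follows by a slight modification of the proof of Proposition \ref{prop:funcroealgttte}, and your argument is precisely that modification (drop the interior summand, take $V_2\otimes\Id_{L^2(\reals)}$ transported through the unitaries $W,W'$, check $\reals$-invariance is preserved, and run the rotation homotopy of \cite{HR}*{Lemma 5.2.4} inside the invariant subalgebra). Your explicit verification that the rotation homotopy stays in $M_2(C^*(Y'\times\reals)^{\Gamma\times\reals})$ is exactly the point the paper leaves implicit, and you have handled it correctly.
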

\subsection{The Evaluation at Infinity Map}
Let $(X,Y,\iota)$ be a space with cylindrical end on which $\Gamma$ acts as above. Asymptotically $\reals_+$-invariant operators can be ``evaluated at infinity'' in the sense of Propositions \ref{prop:defevatinfty} and \ref{prop:evatinfty} to give $\reals$-invariant operators on $Y\times \reals$. In order to do this we first introduce the notion of $(X,Y,\iota)$ modules, which is given by a pair consisting of an $X$-module tailored to the end and a cylindrical $Y\times \reals$-module which are related in a special way.
\begin{definition}
Let $(X,Y,\iota)$ be a space with cylindrical end. A pair $(H,H^\prime)$ of Hilbert spaces is called a \textit{$(X,Y,\iota)$-module}, if there is a tuple $(\rho,\rho^\prime,U,U^\prime, \{P_s\},\{Q_s\},i)$ satisfying the following properties:
\begin{itemize}
 \item $(\rho,U,\{P_s\})$ and $(\rho^\prime,U^\prime,\{Q_s\})$ endow $H$ and $H^\prime$ with the structure of an $X$-module tailored to the end and a cylindrical $Y\times \reals$-module respectively.
 \item $i$ is a unitary $\chi_{Y_\infty} H \rightarrow \chi_{Y\times \reals_+}H^\prime$ intertwining the $\Gamma$-representations and the representations of $C_0(Y_\infty)$ and $C_0(Y\times \reals_+)$ on $\chi_{Y_\infty} H$ and $\chi_{Y\times \reals_+}H^\prime$ respectively.
\item $Q_s \circ i = i \circ P_s \restriction_{\chi_{Y_\infty} H}$ for all $s > 0$.
\end{itemize}
\label{def:xyiotamod}
\end{definition}
\begin{remark}
Note that $\rho^\prime(f)Q_s = Q_s\rho^\prime(L^\prime_s(f))$ in particular implies that $Q_s$ applied to vectors in $H^\prime$ which are supported in $Y\times [R,\infty)$, results in vectors with support in $Y\times [R+s,\infty)$. This observation will be used in the proof of Proposition \ref{prop:defevatinfty}.
\end{remark}
In the following we will call an element $v\in H^\prime$ compactly supported if its support in the sense of Definition \ref{def:suppofvecs} is a compact subset of $Y\times \reals$. The nondegeneracy of $\rho^\prime$ implies that compactly supported vectors are dense in $H^\prime$.
\begin{proposition}
 For $T\in \mathbb{R}(\widetilde{X})^{\Gamma,\mathbb{R}_+}$ and a compactly supported vector $v \in H^\prime$ the limit $T^\infty v \coloneqq \lim_{s \to \infty} Q_{-s}iTi^*Q_sv$ exists in
 $H^\prime$ and the mapping $v \mapsto T^{\infty}v$ extends to a continuous linear map
 $T^{\infty}$ on $H^\prime$. Furthermore, the operator $T^{\infty}$ defined in this way is an element of $\reals(Y\times \reals)^{\Gamma\times\reals}$.
 \label{prop:defevatinfty}
\end{proposition}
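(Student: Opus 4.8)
The plan is to proceed in three stages: first establish convergence of the net $Q_{-s}iTi^*Q_sv$ for compactly supported $v$, then show the limit operator is bounded with norm at most $\norm{T}$, and finally verify that $T^\infty$ lies in $\reals(Y\times\reals)^{\Gamma\times\reals}$, i.e.\ that it is locally compact, has finite propagation, is $\Gamma$-equivariant, and is $\reals$-invariant. For the convergence, fix a compactly supported $v \in H^\prime$; since $v$ is supported in some $Y\times[-R_0,R_0]$, for $s$ large enough $Q_sv$ is supported in $Y\times[s-R_0,\infty)$, hence (after applying $i^*$) lies in the range of $P_{s'}$ for all $s' \leq s - R_0$. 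The key computation is to show that the difference $Q_{-s'}iTi^*Q_{s'}v - Q_{-s}iTi^*Q_sv$ for $s' < s$ can be rewritten, using the intertwining relation $Q_s i = i P_s$ and $P_{s-s'}P_{-(s-s')} = \chi_{\iota(Y\times[s-s',\infty))}$, in terms of $(P_{-(s-s')}TP_{s-s'} - T)$ applied to a vector supported far out on the cylinder, whose norm is controlled by the asymptotic $\reals_+$-invariance of $T$. Thus the net is Cauchy and $T^\infty v$ is well defined; boundedness by $\norm{T}$ is immediate since each $Q_{-s}iTi^*Q_s$ is a contraction (as $i$ is unitary, $Q_s$ unitary, and we are restricting to where everything is isometric), so $T^\infty$ extends continuously to all of $H^\prime$.

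Next I would check the algebraic properties. Finite propagation: if $\prop T = R$, then $iTi^*$ has propagation at most $R$ on $\chi_{Y_\infty}H^\prime$ and conjugating by the shift $Q_s$ does not change propagation, so $T^\infty$ has propagation at most $R$. Local compactness: for $f \in C_0(Y\times\reals)$ compactly supported, $fQ_{-s}iTi^*Q_s = Q_{-s}(L'_{-s}(f))iTi^*Q_s = Q_{-s}i\,(L_{-s}(f))\,Ti^*Q_s$ where $L_{-s}(f) \in C_0(Y_\infty)$ for $s$ large; since $(L_{-s}(f))T$ is compact (as $T$ is locally compact as an element of $\reals(X)^\Gamma$) and conjugation by a unitary preserves compactness, $fT^\infty$ is a norm limit of compacts, hence compact; similarly for $T^\infty f$. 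Equivariance: $i$ intertwines the $\Gamma$-representations and $Q_s$ commutes with $U^\prime$, while $T$ is $\Gamma$-equivariant, so $T^\infty$ is $\Gamma$-equivariant. Finally, $\reals$-invariance: for fixed $r \in \reals$, $Q_{-r}T^\infty Q_r = \lim_s Q_{-r}Q_{-s}iTi^*Q_sQ_r = \lim_s Q_{-(s+r)}iTi^*Q_{s+r} = T^\infty$ by reindexing the net; the same works for $r < 0$ once $s + r > 0$.

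The main obstacle I expect is the convergence argument, specifically bookkeeping the supports so that the telescoping difference genuinely lands in the regime where asymptotic $\reals_+$-invariance applies. One has to be careful that $Q_sv$ is supported far out (using the remark that $Q_s$ pushes support in the $\reals_+$ direction by $s$), that $i^*$ then produces a vector supported in $\iota(Y\times[\text{large},\infty))$, that $P_{-s}TP_s$ acting on it only feels the part of $T$ beyond the propagation-width collar, and that the error terms $E_{R,s}$ from the asymptotic invariance estimate are uniform in $s$. This is essentially the same style of estimate as in the lemma showing asymptotically $\reals_+$-invariant operators form a $*$-algebra, and I would reuse that pattern: split off $\chi_R$ factors, replace $P_{-s}TP_s\chi_R$ by $T\chi_R + E_{R,s}(T)$, and push the limit through. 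A minor additional point is to confirm that the limit is independent of which compactly supported representative one approximates a general vector by, but that follows from the uniform contraction bound.
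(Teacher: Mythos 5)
Your overall strategy matches the paper's: the Cauchy-net argument for existence via the uniform estimate $\sup_{s>0}\norm{(P_{-s}TP_s-T)\chi_{\iota(Y\times[R,\infty))}}\to 0$, the bound $\norm{T^\infty}\le\norm{T}$, and the reindexing arguments for $\reals$- and $\Gamma$-invariance are all exactly what the paper does (the paper does not even bother to spell out finite propagation, which you handle correctly).

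There is, however, a genuine gap in your local compactness step. You write that $fT^\infty$ is a \emph{norm} limit of the compact operators $Q_{-s}\,i\,(L_\cdot(f))\,T\,i^*Q_s$, ``hence compact.'' But what the convergence argument actually delivers is only strong convergence of $Q_{-s}iTi^*Q_s$ to $T^\infty$ (pointwise on compactly supported vectors, with the threshold $s_0$ depending on the support of the individual vector $v$, plus a uniform norm bound). A strong limit of compact operators need not be compact, so as written the conclusion does not follow. The missing idea is a uniformity argument: because $f$ is compactly supported and $T$ has finite propagation $R_0$, the operator $fT^\infty$ vanishes on vectors supported in $Y\times(-\infty,-M-R_0)$, hence factors through the subspace $\chi_{Y\times[-M-R_0,\infty)}H'$; and on \emph{that} subspace the net $\chi\,Q_{-s}iTi^*Q_s\,\chi$ is norm-Cauchy, because every unit vector supported in $Y\times[-M-R_0,\infty)$ is pushed by $Q_{s_1}$ (for a single $s_1$ depending only on $M$, $R_0$ and $\widetilde R$) into $Y\times[\widetilde R,\infty)$, so the asymptotic-invariance estimate applies uniformly over the unit ball of that subspace. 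Only after this upgrade to norm convergence on the relevant subspace does compactness of the limit follow from compactness of $(L_s(f))T$. This is precisely where the paper spends most of its effort in the local compactness bullet, and it is the step your proposal elides. (A minor further point: with the paper's conventions the relevant translate of $f$ is $L'_s(f)$, i.e.\ the shift of $f$ towards $+\infty$, which is what lands in $C_0(Y_\infty)$ for large $s$; your $L'_{-s}(f)$ shifts the wrong way, though this is clearly just a sign-convention slip.)
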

\begin{proof}
In the following $\chi_R$ will denote $\chi_{\iota(Y\times[R,\infty))}$ and will be seen as an operator on $H$. $\chi^\prime_R$ will denote $\chi_{Y\times [R,\infty)}$ and will act as an operator on $H^\prime$.
\begin{itemize}
 \item The limit exists: for $\epsilon > 0$ choose $\widetilde{R}$ such that $\sup_{s>0}||(P_{-s}TP_s - T)\chi_R|| < \epsilon$
  for all $R \geq \widetilde{R}$. Let $\widetilde{s}$ be such that
  $Q_s(v)$ is supported on $Y\times \reals_+$ for all $s \geq \widetilde{s}$.
  Set $s_0 = \widetilde{R}+\widetilde{s}$. Then we have
  $$||Q_{s_0+s}^{-1}iTi^*Q_{s_0+s}v - Q_{s_0}^{-1}iTi^*Q_{s_0}v|| = ||Q_{s_0}^{-1}(Q_s^{-1}iTi^*Q_s - iTi^*)Q_{s_0}v||.$$
  Note that $(Q_s^{-1}iTi^*Q_s - iTi^*)Q_{s_0}v = i(P_{-s}TP_s - T)\chi_{\widetilde{R}}i^*Q_{s_0}v$;
  hence
  $$||Q_{s_0}^{-1}(Q_s^{-1}iTi^*Q_s - iTi*)Q_{s_0}v|| < ||(P_{-s}TP_s - T)\chi_{\widetilde{R}}||||v||,$$
  where we use that $Q_{s_0}$ is a unitary. The latter inequality shows that $\{Q_s^{-1}iTi^*Q_sv\}_{s\geq \widetilde{s}}$ is
  a Cauchy net and thus has a limit.
  \item $T^\infty$ is a bounded operator on $H^\prime$: we clearly have $||T^{\infty}v|| \leq ||T||||v||$ for all compactly supported $v$ which shows that $v \mapsto T^\infty v$ is a bounded operator on the dense subspace of compactly supported vectors in $H^\prime$ and thus extends to a bounded operator on $H^\prime$.
  \item $T^\infty$ is an $\reals$ and $\Gamma$-invariant operator: for $t \in \reals$ we have $$Q_{-t}T^\infty Q_{t} v = Q_{-t}(\lim_{s \to \infty}Q_{-s}iTi^*Q_sQ_tv) = \lim_{s \to \infty}Q_{-s-t}iTi^*Q_{s+t}v=$$
  $$\lim_{s \to \infty}Q_{-s}iTi^*Q_sv = T^\infty v$$
  for all compactly supported $v$. Therefore $Q_{-t}T^\infty Q_{t} = T^\infty$. A similar computation and the fact that the $\reals$-action and the $\Gamma$-action on $H^\prime$ commute proves the $\Gamma$-invariance.
  \item $T^\infty$ is locally compact: We show that for $\psi \in C_c(Y\times \reals)$, $\psi T^\infty$ is compact. The proof of the compactness of $T^\infty\psi$ is similar and even more straightforward. There exists $M > 0$ such that the support of $\psi$ is contained in $Y \times [-M,\infty)$. Set $R_0 \coloneqq \prop T$. If $v$ is compactly supported with support in $Y \times (-\infty,-M-R_0)$, then $\psi Q_{-s}iTi^*Q_sv = 0$.
  We thus have a commutative diagram 
  $$\begin{tikzcd}
    H^\prime \arrow{r}{\psi T^\infty} \arrow{d}{\chi^\prime_{-M-R_0}} & H^\prime \\
    \chi^\prime_{-M-R_0}H^\prime  \arrow{r}{\psi T^\infty} & \chi^\prime_{-M-R_0}H^\prime. \arrow[hook]{u}
\end{tikzcd}$$ Therefore, it suffices to show that the restriction of $\psi T^\infty$ to $\chi^\prime_{-M-R_0}H^\prime$ is compact. First we show that $\{\chi^\prime_{-M-R_0}Q_{-s}iTi^*Q_s\chi^\prime_{-M-R_0}\}_{s\geq M+R_0}$ is a norm convergent net of operators on $\chi^\prime_{-M-R_0}H^\prime$. Set $s_1 \coloneqq \widetilde{R}+ M+R_0$. Then, similar to the above computation, we have
$$||\chi^\prime_{-M-R_0}Q_{s_1+s}^{-1}iTi^*Q_{s_1+s}\chi^\prime_{-M-R_0} - \chi^\prime_{-M-R_0}Q_{s_1}^{-1}iTi^*Q_{s_1}\chi^\prime_{-M-R_0}|| = $$
$$||\chi^\prime_{-M-R_0}Q_{s_1}^{-1}(Q_s^{-1}iTi^*Q_s - iTi^*)Q_{s_1}\chi^\prime_{-M-R_0}||.$$
Furthermore,
$$(Q_s^{-1}iTi^*Q_s - iTi^*)Q_{s_1}\chi^\prime_{-M-R_0} = i(P_{-s}TP_s - T)\chi_{\widetilde{R}}i^*Q_{s_1}\chi^\prime_{-M-R_0},$$
which implies
$$||\chi^\prime_{-M-R_0}Q_{s_1}^{-1}(Q_s^{-1}iTi^*Q_s - iTi^*)Q_{s_1}\chi^\prime_{-M-R_0}|| < \epsilon.$$
Hence, $\{\psi\chi^\prime_{-M-R_0}Q_{-s}iTi^*Q_s\chi^\prime_{-M-R_0}\}_{s\geq M+R_0}$ is norm convergent and converges strongly to $\psi T^\infty$ in $L(\chi^\prime_{-M-R_0}H^\prime)$. Thus $\psi T^\infty$ restricted to $\chi^\prime_{-M-R_0}H^\prime$ is actually the norm limit of 
$$\psi\chi^\prime_{-M-R_0}Q_{-s}iTi^*Q_s\chi^\prime_{-M-R_0} = \chi^\prime_{-M-R_0}Q_{-s}L^\prime_s(\psi)iTi^*Q_s\chi^\prime_{-M-R_0}$$
$$= \chi^\prime_{-M-R_0}Q_{-s}iL_s(\psi)Ti^*Q_s\chi^\prime_{-M-R_0}$$
as $s$ tends to infinity. The compactness of $\psi T^\infty\restriction_{\chi^\prime_{-M-R_0}H^\prime}$ then follows from that of $L_s(\psi)T$.
\end{itemize}
\end{proof}
\begin{proposition}
 The map $\ev_{\infty} : \mathbb{R}(\widetilde{X})^{\Gamma,\mathbb{R}_+} \rightarrow
 \reals(Y \times \reals)^{\Gamma \times \reals}$ given by $T \mapsto T^{\infty}$ is continuous if the domain and target space are endowed with the norms of $C^*(\widetilde{X})^{\Gamma,\mathbb{R}_+}$ and $C^*(Y \times \reals)^{\Gamma \times \reals}$ respectively. Thus it gives rise to a morphism of $C^*$-algebras $\ev_{\infty} : C^*(\widetilde{X})^{\Gamma,\mathbb{R}_+} \rightarrow
 C^*(Y \times \reals)^{\Gamma \times \reals}$.
 \label{prop:evatinfty}
\end{proposition}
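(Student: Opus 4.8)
The plan is to verify that $\ev_\infty\colon T\mapsto T^\infty$ is a $*$-homomorphism between the algebraic Roe algebras $\reals(\widetilde{X})^{\Gamma,\reals_+}$ and $\reals(Y\times\reals)^{\Gamma\times\reals}$ which is contractive for the $C^*$-norms these inherit as dense subalgebras of $C^*(\widetilde{X})^{\Gamma,\reals_+}$ and $C^*(Y\times\reals)^{\Gamma\times\reals}$; it then extends uniquely by continuity, which is exactly the assertion. That $\ev_\infty$ has the asserted target is the content of Proposition \ref{prop:defevatinfty}. Additivity is clear, and multiplicativity and $*$-compatibility are checked by arguments modelled on the proof of Proposition \ref{prop:defevatinfty}: writing $A_s=Q_{-s}iTi^*Q_s$, one uses that $i^*i$ restricts to the identity on vectors supported in $Y_\infty$ and that $T$ has finite propagation to rewrite, for $s$ large and $v$ compactly supported, $Q_{-s}iSTi^*Q_sv$ as $(Q_{-s}iSi^*Q_s)(Q_{-s}iTi^*Q_s)v$; since the second factor tends to $T^\infty v$ and all these vectors remain in a fixed half-cylinder $\chi^\prime_{-M}H'$ on which $Q_{-s}iSi^*Q_s$ converges to $S^\infty$ in norm (the Cauchy estimate from the proof of Proposition \ref{prop:defevatinfty} being uniform in the vector), one obtains $(ST)^\infty=S^\infty T^\infty$. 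For the adjoint, $A_s\to T^\infty$ strongly forces $A_s^*\to (T^\infty)^*$ weakly, whereas $A_s^*=Q_{-s}iT^*i^*Q_s\to (T^*)^\infty$ strongly since $T^*$ is again asymptotically $\reals_+$-invariant; uniqueness of weak limits gives $(T^*)^\infty=(T^\infty)^*$.

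It remains to prove contractivity, which is the crux. For the reduced completion this is immediate: the relevant norms are the operator norms on $H$ and $H'$, and the bound $\|T^\infty v\|\le\|T\|\,\|v\|$ for compactly supported $v$, hence $\|T^\infty\|_{B(H')}\le\|T\|_{B(H)}$, was already recorded inside the proof of Proposition \ref{prop:defevatinfty}. For the quotient completion, however, the $C^*$-norm carried by $\reals(\widetilde{X})^{\Gamma,\reals_+}$ is the one inherited from $C^*_q(\widetilde{X})^\Gamma$, which in general strictly dominates the operator norm, so the estimate above does not by itself suffice. Here I would argue representation-theoretically: a $*$-representation $\sigma$ of $\reals(Y\times\reals)^\Gamma$ realising the quotient norm on the target restricts to the $\reals$-invariant subalgebra and pulls back along $\ev_\infty$ to a $*$-representation of $\reals(\widetilde{X})^{\Gamma,\reals_+}$ sending $T$ to an operator of norm $\|\sigma(T^\infty)\|$, and one must bound this by $\|T\|_{C^*_q(\widetilde{X})^\Gamma}$. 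This is the step where the precise construction of the quotient completion of \cite{SS18} must be invoked; it is built exactly so that such a comparison across the inclusion of the asymptotically invariant subalgebra is available, and the same reasoning handles the maximal completion.

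The main obstacle is precisely this continuity statement for the chosen completion. A $*$-homomorphism between pre-$C^*$-algebras need not be continuous for the enveloping $C^*$-norms — the spectrum, and hence the norm, of an element can grow under a homomorphism of incomplete $*$-algebras — so one has to exploit the geometry: $T^\infty$ is manufactured from $T$ by conjugating with the partial isometries $Q_{-s}i$, which act isometrically on exactly the supports entering the limit, and the chosen completions are compatible with such conjugations, which is the same mechanism underlying Propositions \ref{prop:funcroealgttte} and \ref{prop:funccylroealg}. Once contractivity is in hand, density of $\reals(\widetilde{X})^{\Gamma,\reals_+}$ in $C^*(\widetilde{X})^{\Gamma,\reals_+}$ yields the unique continuous extension $\ev_\infty\colon C^*(\widetilde{X})^{\Gamma,\reals_+}\to C^*(Y\times\reals)^{\Gamma\times\reals}$, as required.
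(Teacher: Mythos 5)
Your verification that $\ev_\infty$ is a $*$-homomorphism at the algebraic level is sound and matches the paper's computation (the paper writes $Q_s^{-1}iSi^*Q_sv=S^\infty v+E(s)$ and bounds $\norm{Q_s^{-1}iTi^*Q_sE(s)}\le\norm{T}\norm{E(s)}$; your version via norm convergence on a fixed half-cylinder is an equivalent, slightly more careful, route), and your observation that the estimate $\norm{T^\infty v}\le\norm{T}\norm{v}$ from Proposition \ref{prop:defevatinfty} settles contractivity for the reduced completion is exactly the paper's first step. The gap is at the point you yourself identify as the crux: contractivity for the quotient completion. Asserting that the quotient completion ``is built exactly so that such a comparison across the inclusion of the asymptotically invariant subalgebra is available'' is not an argument, and it also points at the wrong mechanism: the difficulty has nothing to do with the inclusion of the asymptotically invariant subalgebra. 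The quotient norm of \cite{SS18} is computed from the quotient covers $\widetilde{X}/N$ for normal subgroups $N\normalsubgroup\Gamma$ (it is the supremum over $N$ of the reduced norms of the pushed-down operators), so a representation of the target realising its quotient norm does not pull back along $\ev_\infty$ to anything that is a priori dominated by the quotient norm of the domain; your representation-theoretic scheme simply restates what has to be proved.

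What actually closes the argument, and what the paper uses, is the naturality of $\ev_\infty$ under passage to quotient covers: for every $N\normalsubgroup\Gamma$ the evaluation-at-infinity maps fit into a commutative square with the push-down homomorphisms $\reals(\widetilde{X})^{\Gamma,\reals_+}\rightarrow\reals(\widetilde{X}/N)^{\Gamma/N,\reals_+}$ and $\reals(Y\times\reals)^{\Gamma\times\reals}\rightarrow\reals(Y/N\times\reals)^{\Gamma/N\times\reals}$ (this is immediate from the defining formula for $T^\infty$, since push-down intertwines the shift families). Writing $\pi_N$ for push-down, one then gets $\norm{\pi_N(\ev_\infty(T))}_{\red}=\norm{\ev_\infty(\pi_N(T))}_{\red}\le\norm{\pi_N(T)}_{\red}\le\norm{T}_q$ by the reduced-case contraction applied on each quotient, and taking the supremum over $N$ yields $\norm{\ev_\infty(T)}_q\le\norm{T}_q$. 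You need to state and use this compatibility explicitly; without it the quotient case is not established. (Your aside that ``the same reasoning handles the maximal completion'' is also only half right: for the maximal norm a genuine representation-theoretic argument does work, but for a different reason, namely that every $*$-representation of the algebraic Roe algebra is dominated by the maximal norm.)
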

\begin{proof}
If we endow $\mathbb{R}(\widetilde{X})^{\Gamma,\mathbb{R}_+}$ with the reduced norm, the continuity of the map $\ev_{\infty} : \mathbb{R}(\widetilde{X})^{\Gamma,\mathbb{R}_+} \rightarrow
 C^*_{\red}(Y \times \reals)^{\Gamma\times \reals}$ follows from the proof of the previous proposition. Indeed, we already saw that this
 map is a contraction. The continuity of this map for the quotient completion follows from its continuity for the reduced completion, the commutativity of the the diagram
 $$
 \begin{tikzcd}
 \reals(X)^{\Gamma,\reals_+} \arrow{r}{\ev_\infty} \arrow{d} & \reals(Y \times \reals)^{\Gamma \times \reals} \arrow{d} \\
 \reals(X/N)^{\Gamma/N,\reals_+} \arrow{r}{\ev_\infty} & \reals(Y/N \times \reals)^{\Gamma/N \times \reals}
 \end{tikzcd}
 $$
 for all normal subgroups $N$ of $\Gamma$ and the definition of the quotient completion in \cite{SS18}*{Section 4}. It remains to show that it is a morphism of $*$-algebras. Let $S$ and $T$ be in $\mathbb{R}(\widetilde{X})^{\Gamma,\mathbb{R}_+}$
 and let $v \in H^\prime$ be compactly supported. We have
 $$(TS)^\infty v = \lim_{s}Q_s^{-1}iTSi^*Q_sv = \lim_{s}Q_s^{-1}iTi^*Q_sQ_s^{-1}iSi^*Q_s v =$$
 $$\lim_{s}Q_s^{-1}iTi^*Q_s(S^\infty v + E(s)) = T^\infty(S^\infty v).$$
 The last equality follows from the fact that $||Q_s^{-1}iTi^*Q_s(E(s))|| \leq ||T||||E(s)||$. The rest is clear.
\end{proof}
Thus an $(X,Y,\iota)$-module allows us to define an evaluation at infinity map. Next we will prove a functoriality result, which in particular shows that the induced map on $K$-theory is independent of the chosen $(X,Y,\iota)$-module. Let $(\widehat{X},\widehat{Y},\widehat{\iota})$ be another space with cylindrical end (and a $\Gamma$-action). Let $(\widehat{H},\widehat{H}^\prime)$ be an $(\widehat{X},\widehat{Y},\widehat{\iota})$-module. Let $f:(X,Y,\iota) \rightarrow (\widehat{X},\widehat{Y},\widehat{\iota})$ be a map of spaces with cylindrical ends. In particular, the suspension of the restriction of $f$ to $Y$ defines a map $Y\times \reals \rightarrow \widehat{Y}\times \reals$.  In this situation we have the following
\begin{proposition}
There are isometries $V:H \rightarrow \widehat{H}$ and $V^\prime : H^\prime \rightarrow \widehat{H^\prime}$ which satisfy the conditions of Definition \ref{def:acylcovisom} and Proposition \ref{prop:funccylroealg} respectively and which make the diagram
$$\begin{tikzcd}
    C^*(X)^{\Gamma,\reals_+} \arrow{r}{\ev_\infty} \arrow{d}{\ad_V} & C^*(Y\times\reals)^{\Gamma \times \reals} \arrow{d}{\ad_{V^\prime}}\\
    C^*(\widehat{X})^{\Gamma,\reals_+}  \arrow{r}{\ev_\infty} & C^*(\widehat{Y}\times \reals)^{\Gamma \times \reals}
\end{tikzcd}$$
commutative.
In particular, the map $(ev_\infty)_*:K_*(C^*(X)^{\Gamma,\reals_+}) \rightarrow K_*(C^*(Y\times\reals)^{\Gamma \times \reals})$ does not depend on the choice of the $(X,Y,\iota)$-module up to the usual canonical isomorphisms. 
\end{proposition}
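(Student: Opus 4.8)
The plan is to adapt the proof of Proposition \ref{prop:funcroealgttte}, producing the two covering isometries $V$ and $V^\prime$ simultaneously out of a single covering isometry on the base $Y$, so that they become automatically compatible with the intertwiners $i$ and $\widehat{i}$ that are part of the $(X,Y,\iota)$- and $(\widehat{X},\widehat{Y},\widehat{\iota})$-module structures. Write $g \coloneqq f|_Y \colon Y \to \widehat{Y}$; it is a coarse map. The first step is to normalise the modules. Using the unitary $W$ of Definition \ref{def:xmodttte}, the defining unitary of the cylindrical structure on $H^\prime$, and the intertwiner $i$, one checks that after passing to equivalent identifications we may assume $\chi_{Y_\infty}H \cong L^2(\reals_+)\otimes H_Y$ and $H^\prime \cong L^2(\reals)\otimes H_Y$ for one and the same $Y$-module $H_Y$, with $\{P_s\}$ and $\{Q_s\}$ carried to the standard shift families and $i$ carried to the inclusion of the ``positive half''. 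The only point worth noting here is that a unitary intertwining the two standard shift semigroups on $L^2(\reals_+)\otimes H_Y$ and $L^2(\reals_+)\otimes H_Y^\prime$ commutes with multiplication by $C_0(\reals_+)$, is therefore decomposable along $\reals_+$, and hence, by shift–equivariance, is of the product form $\Id_{L^2(\reals_+)}\otimes V_Y$; this is what allows the $Y$-module in the tailored-end structure of $H$ and the one in the cylindrical structure of $H^\prime$ to be identified. Normalise $(\widehat{H},\widehat{H}^\prime)$ the same way, and choose, by Proposition \ref{prop:funcclasroealg}, an isometry $V_1 \colon \chi_{X\setminus Y_\infty}H \to \chi_{\widehat{X}\setminus\widehat{Y}_\infty}\widehat{H}$ covering $f|_{X\setminus Y_\infty}$ and an isometry $V_Y \colon H_Y \to \widehat{H}_{\widehat{Y}}$ covering $g$.

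Next, set $V \coloneqq V_1 \oplus (\Id_{L^2(\reals_+)}\otimes V_Y)$ with respect to the decompositions $H \cong \chi_{X\setminus Y_\infty}H \oplus (L^2(\reals_+)\otimes H_Y)$ and $\widehat{H} \cong \chi_{\widehat{X}\setminus\widehat{Y}_\infty}\widehat{H} \oplus (L^2(\reals_+)\otimes\widehat{H}_{\widehat{Y}})$, and set $V^\prime \coloneqq \Id_{L^2(\reals)}\otimes V_Y$ with respect to $H^\prime \cong L^2(\reals)\otimes H_Y$ and $\widehat{H}^\prime \cong L^2(\reals)\otimes\widehat{H}_{\widehat{Y}}$. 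Exactly as in the proof of Proposition \ref{prop:funcroealgttte}, $V$ covers $f$ and satisfies $VP_s = \widehat{P}_s V$, so it meets the conditions of Definition \ref{def:acylcovisom}; and $V^\prime$, being $\Id_{L^2(\reals)}\otimes V_Y$ with $V_Y$ covering $g$, covers the suspension of $g$, intertwines $\{Q_s\}$ with $\{\widehat{Q}_s\}$ and does not shift supports in the $\reals$-direction, so it meets the conditions of Proposition \ref{prop:funccylroealg}. The key compatibility is the identity $\widehat{i}\,V = V^\prime i$ of operators $H \to \widehat{H}^\prime$: both sides annihilate $\chi_{X\setminus Y_\infty}H$, since $V$ carries that summand into $\chi_{\widehat{X}\setminus\widehat{Y}_\infty}\widehat{H}$ on which $\widehat{i}$ vanishes, while $i$ vanishes there by definition; and on $\chi_{Y_\infty}H \cong L^2(\reals_+)\otimes H_Y$ both sides act as $\Id_{L^2(\reals_+)}\otimes V_Y$ followed by the positive-half inclusion, once $i$ and $\widehat{i}$ have been normalised to such inclusions.

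It remains to check that the square commutes; by continuity of all four maps it is enough to do so on the dense $*$-subalgebra $\reals(X)^{\Gamma,\reals_+}$. Fix $T$ there. From $\widehat{i}\,V = V^\prime i$ and its adjoint $V^*\widehat{i}^* = i^*(V^\prime)^*$ we get $\widehat{i}(VTV^*)\widehat{i}^* = (\widehat{i}\,V)T(\widehat{i}\,V)^* = V^\prime(iTi^*)(V^\prime)^*$. Since $V^\prime$ intertwines $\{Q_s\}$ with $\{\widehat{Q}_s\}$, for a compactly supported $w \in \widehat{H}^\prime$ we have
\[
(VTV^*)^\infty w = \lim_{s\to\infty}\widehat{Q}_{-s}\,\widehat{i}(VTV^*)\widehat{i}^*\,\widehat{Q}_s w = \lim_{s\to\infty}V^\prime Q_{-s}(iTi^*)Q_s(V^\prime)^* w .
\]
The net $Q_{-s}(iTi^*)Q_s$ is uniformly bounded by $\norm{T}$ and, as shown in the proof of Proposition \ref{prop:defevatinfty}, converges strongly to $T^\infty$ on all of $H^\prime$; hence the right-hand side equals $V^\prime T^\infty(V^\prime)^* w$. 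Thus $\ev_\infty\circ\ad_V = \ad_{V^\prime}\circ\ev_\infty$ on $\reals(X)^{\Gamma,\reals_+}$, and therefore on all of $C^*(X)^{\Gamma,\reals_+}$. For the last assertion one takes $f = \iden_X$ and two different $(X,Y,\iota)$-modules: Propositions \ref{prop:funcroealgttte} and \ref{prop:funccylroealg} identify the $K$-theory maps induced by $\ad_V$ and $\ad_{V^\prime}$ with the respective canonical isomorphisms, and the commuting square says exactly that $(\ev_\infty)_*$ intertwines them.

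The main obstacle is the normalisation step of the first paragraph: one has to check carefully that the $Y$-module built into the tailored-end structure of $H$ and the one built into the cylindrical structure of $H^\prime$ can be taken to coincide, so that $i$ genuinely becomes the positive-half inclusion (this is where the role of $i$ in Definition \ref{def:xyiotamod}, together with the product form of shift-semigroup-equivariant unitaries, is used). Once $V$ and $V^\prime$ have been arranged to satisfy $\widehat{i}\,V = V^\prime i$, everything else is a direct adaptation of the computations already carried out for Propositions \ref{prop:defevatinfty}, \ref{prop:evatinfty} and \ref{prop:funcroealgttte}.
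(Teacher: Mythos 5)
Your argument is correct and follows the same overall strategy as the paper: produce a pair of covering isometries that are compatible with the intertwiners $i$ and $\widehat{i}$, and then reduce the commutativity of the square to the limit computation defining $\ev_\infty$ on compactly supported vectors. The difference lies in how the compatibility is arranged. The paper first chooses $V^\prime$ (subject only to the extra condition that it and its adjoint preserve the positive halves) and then \emph{defines} the cylindrical block of $V$ to be $\widehat{i}^{\,*}V^\prime i$; the identity you need, $\widehat{i}V = V^\prime i$ on vectors supported far out on the end, is then tautological, and no normalisation of the modules is required. You instead normalise both modules to the standard form $L^2(\reals_+)\otimes H_Y$, $L^2(\reals)\otimes H_Y$ and build $V$ and $V^\prime$ from one base isometry $V_Y$; this buys a cleaner global identity $\widehat{i}V=V^\prime i$ at the cost of your normalisation lemma. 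That lemma is correct as you state it (intertwining all $P_s^{\st}$ and their adjoints forces a unitary to commute with the spectral projections $P_s^{\st}(P_s^{\st})^*=\chi_{[s,\infty)}$, hence with multiplication by functions of $t$, hence product form by shift-equivariance), but note one point of care: the unitaries $W$ in Definitions \ref{def:xmodttte} and of cylindrical modules only \emph{cover} the identity, they need not intertwine the $C_0$-representations, so the two copies of $H_Y$ you identify agree only up to a unitary covering $\iden_Y$. This is harmless for everything you subsequently do (covering isometries are all that is used), but ``one and the same $Y$-module'' should be read in that coarse sense. With that caveat, the proof is complete and the final functoriality/independence statement follows exactly as you say.
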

\begin{proof}
Let $V^\prime: H^\prime \rightarrow \widehat{H^\prime}$ satisfy the conditions of Proposition \ref{prop:funccylroealg} and such that $V^\prime$ and $(V^{\prime})^*$ map vectors which are supported in $Y\times \reals_+$ and $\widehat{Y}\times \reals_+$ to vectors which are supported in $\widehat{Y}\times \reals_+$ and $Y\times \reals_+$ respectively. We have decompositions
$H \cong \chi_{X\setminus Y_\infty}H \oplus \chi_{Y_\infty}H$ and $\widehat{H} = \chi_{\widehat{X}\setminus \widehat{Y}_\infty}\widehat{H} \oplus \chi_{\widehat{Y}_\infty}\widehat{H}$. Using these decompositions we define $V$ to be the isometry
$$
\begin{pmatrix}
V_1 & 0 \\
0 & \widehat{i}^*V^\prime i
\end{pmatrix},
$$
where $V_1 : \chi_{X\setminus Y_\infty}H \rightarrow \chi_{\widehat{X}\setminus \widehat{Y}_\infty}\widehat{H}$ is any isometry covering the restriction of $f$ to $X\setminus Y_\infty$ and $\widehat{i}$ is the unitary from the definition of an $(\widehat{X},\widehat{Y},\widehat{\iota})$-module identifying $\chi_{\widehat{Y}_\infty}\widehat{H}$ and $\chi_{\widehat{Y}\times \reals_+}\widehat{H}^\prime$. Now we show that for $T \in \reals(X)^{\Gamma,\reals_+}$, $(\ad_{V^\prime}\circ \ev_\infty)(T) = (\ev_\infty \circ \ad_V)(T)$. This then finishes the proof of the proposition. Let $v \in \widehat{H}^\prime$ be compactly supported. We have
$$(\ad_{V^\prime}\circ \ev_\infty)(T)v = V^\prime \lim_sQ_{-s}iTi^*Q_s{V^\prime}^*v =   \lim_s\widehat{Q}_{-s}V^\prime iTi^*{V^\prime}^*\widehat{Q}_sv.$$
On the other hand $(\ev_\infty \circ \ad_V)(T)v = \lim_s\widehat{Q}_{-s}\widehat{i}VTV^*\widehat{i}^*\widehat{Q}_sv$. Set $R_0 = \prop T$. For $s$ sufficiently large $\widehat{Q}_sv$ is supported in $Y\times (R_0,\infty)$. Therefore
$$\lim_s\widehat{Q}_{-s}\widehat{i}VTV^*\widehat{i}^*\widehat{Q}_sv = \lim_s\widehat{Q}_{-s}\widehat{i}\widehat{i}^*V^\prime i Ti^*{V^\prime}^*\widehat{i}\widehat{i}^*\widehat{Q}_sv = \lim_s\widehat{Q}_{-s}V^\prime iTi^*{V^\prime}^*\widehat{Q}_sv.$$
Hence, $(\ad_{V^\prime}\circ \ev_\infty)(T) = (\ev_\infty \circ \ad_V)(T)$.
\end{proof}
\subsection{\texorpdfstring{$(\Gamma,\Lambda)$-}-equivariant Roe Algebras}
Let $(X,Y,\iota)$ be a space with cylindrical end. We do not assume the existence of an action of $\Gamma$ on $X$. Let $\Lambda,\Gamma$ and $\varphi$ be as in Section \ref{relind}. Suppose there exists a map of pairs $\eta:(X,Y_{\infty} \coloneqq \iota(Y\times\reals_+))\rightarrow (B\Gamma,B\Lambda)$ satisfying $\eta(\iota((y,t))) = \eta (\iota((y,0)))$ for all $t\in \reals_+$. This allows us to define $\Gamma$-coverings $\widetilde{X},Y_{(\infty)}^\prime$ of $X,Y_{(\infty)}$ and a $\Lambda$-covering $\widetilde{Y}_{(\infty)}$ of $Y_{(\infty)}$. We obtain in this way new spaces with cylindrical ends $(\widetilde{X},Y^\prime,\iota^\prime)$ and $(\widetilde{Y}_\infty,\widetilde{Y},\widetilde{\iota})$. In this section the Roe algebras will be constructed using fixed $(\widetilde{X},Y^\prime,\iota^\prime)$ and $(\widetilde {Y}_\infty,\widetilde{Y},\widetilde{\iota})$-modules. The construction of the previous section gives rise to evaluation at infinity maps $C^*(\widetilde{Y_\infty})^{\Lambda,\reals_+} \rightarrow C^*(\widetilde{Y}\times \reals)^{\Lambda \times \reals}$ and $C^*(\widetilde{X})^{\Gamma, \reals_+} \rightarrow C^*(Y^\prime \times \reals)^{\Gamma \times \reals}$. As seen in Section \ref{relind},
Chang, Weinberger and Yu constructed a map $C^*(\widetilde{Y}\times \reals)^\Lambda \rightarrow C^*(Y^\prime \times \reals)^\Gamma$ \footnote{They constructed the map between the maximal Roe algebras. In \cite{SS18} the quotient completion was introduced and it was shown, that one has a similar map between the quotient completions of the equivariant algebraic Roe algebras.}. It is easy to see that this map respects the $\reals$-invariance and asymptotic $\reals_+$-invariance of operators. Thus we get, by restriction, a map $ C^*(\widetilde{Y}\times \reals)^{\Lambda\times\reals} \rightarrow C^*(Y^\prime \times \reals)^{\Gamma\times\reals}$. We abuse the notation and denote all such ``change of group maps" by $\psi$. If necessary, the domain and range will be specified to avoid confusion. The corresponding maps at the levels of the localisation and structure algebras will be denoted by $\psi_L$ and $\psi_{L,0}$ respectively.
\begin{definition}
$T \in C^*(\widetilde{X})^{\Gamma,\reals_+}$ is called \textit{asymptotically $\Lambda$-invariant} if $\ev_{\infty}(T)$ is contained in the image of $\psi$. The pullback of $C^*(\widetilde{X})^{\Gamma,\reals_+}$ and $C^*(\widetilde{Y}\times\reals)^{\Lambda\times\reals}$ along $\ev_\infty$ and $\psi$, is called the \textit{$(\Gamma,\Lambda)$-equivariant Roe algebra of $X$} and will be denoted by $C^*(\widetilde{X})^{\Gamma,\reals_+,\Lambda}$.
\end{definition}
\begin{remark}
	\label{rem:pullback}
By definition, we have a commutative diagram
$$\begin{tikzcd}
    C^*(\widetilde{X})^{\Gamma,\reals_+,\Lambda} \arrow{r} \arrow{d} & C^*(\widetilde{Y} \times \reals)^{\Lambda\times\reals} \arrow{d}{\psi}\\
    C^*(\widetilde{X})^{\Gamma,\reals_+} \arrow{r}{\ev_{\infty}} & C^*(Y^\prime \times \reals)^{\Gamma\times\reals}
\end{tikzcd}$$
and elements of $C^*(\widetilde{X})^{\Gamma,\reals_+,\Lambda}$ are given by pairs $(S,T)$ with $S\in C^*(\widetilde{X})^{\Gamma,\reals_+}, T \in C^*(\widetilde{Y} \times \reals)^{\Lambda\times\reals}$ with $ev_\infty(S) = \psi(T)$.
\end{remark}
\begin{definition}
The \textit{$(\Gamma,\Lambda)$-equivariant localisation algebra} (respectively the \textit{$(\Gamma,\Lambda)$-equivariant structure algebra}) of $X$ is defined to be the pullback of the following diagram
$$
\begin{tikzcd}
     & C^*_{L,(0)}(\widetilde{Y} \times \reals)^{\Lambda\times\reals} \arrow{d}{\psi}\\
    C^*_{L,(0)}(\widetilde{X})^{\Gamma,\reals_+} \arrow{r}{\ev_{\infty}} & C^*_{L,(0)}(Y^\prime \times \reals)^{\Gamma\times\reals}
\end{tikzcd}
$$
It will be denoted by $C^*_{L,(0)}(\widetilde{X})^{\Gamma,\reals_+,\Lambda}$.
\end{definition}
We obtain an analogue of the Higson-Roe sequence for spaces with cylindrical ends: the short exact sequence
$$0 \rightarrow C^*_{L,0}(\widetilde{X})^{\Gamma,\reals_+,\Lambda} \rightarrow C^*_{L}(\widetilde{X})^{\Gamma,\reals_+,\Lambda} \rightarrow C^*(\widetilde{X})^{\Gamma,\reals_+,\Lambda} \rightarrow 0$$
gives rise to a long exact sequence
$$\cdots \rightarrow K_*(C^*_{L,0}(\widetilde{X})^{\Gamma,\reals_+,\Lambda}) \rightarrow K_*(C^*_{L}(\widetilde{X})^{\Gamma,\reals_+,\Lambda}) \rightarrow K_*(C^*(\widetilde{X})^{\Gamma,\reals_+,\Lambda}) \rightarrow \cdots.$$
\section{Index of Dirac Operators on Manifolds with Cylindrical Ends}
Let $X$ be an $n$-dimensional spin manifold with a cylindrical end with base $Y$. By this we mean that $(X,Y,\iota)$ is a space with cylindrical end, $\iota$ is smooth and $X \setminus \iota(Y\times(0,\infty))$ is a smooth codimension zero submanifold with boundary $Y$. We fix a map $\eta:(X,Y_{\infty} \coloneqq \iota(Y\times\reals_+))\rightarrow (B\Gamma,B\Lambda)$ satisfying $\eta(\iota((y,t))) = \eta (\iota((y,0)))$ for all $t\in \reals_+$ which gives rise to certain covers of $X$ and $Y$, which we will denote as in the previous section. Denote by $L^2(\slashed{\mathfrak{S}}_{\widetilde{X}}), L^2(\slashed{\mathfrak{S}}_{Y^\prime \times \reals}),L^2(\slashed{\mathfrak{S}}_{\widetilde{Y}_\infty})$ and $L^2(\slashed{\mathfrak{S}}_{\widetilde{Y}\times \reals})$ the square integrable sections of the $\Cl_n$-spinor bundles on $\widetilde{X}, Y^\prime \times \reals, \widetilde{Y}_\infty$ and $\widetilde{Y}\times \reals$ respectively. The pairs $(L^2(\slashed{\mathfrak{S}}_{\widetilde{X}}),L^2(\slashed{\mathfrak{S}}_{Y^\prime \times \reals}))$ and $(L^2(\slashed{\mathfrak{S}}_{\widetilde{Y}_\infty}),L^2(\slashed{\mathfrak{S}}_{\widetilde{Y}\times \reals}))$ can be given the structure of an $(\widetilde{X},Y^\prime,\iota^\prime) \Cl_n$-module and an $(\widetilde{Y}_\infty,\widetilde{Y},\widetilde{\iota}) \Cl_n$-module in the natural way respectively. In particular, the families of unitaries on $L^2(\slashed{\mathfrak{S}}_{Y^\prime \times \reals})$ and $L^2(\slashed{\mathfrak{S}}_{\widetilde{Y}\times \reals})$ needed in the definition of cylindrical $Y^\prime \times \reals$ and $\widetilde{Y}\times \reals$-modules will be given by the shift of sections in the $\reals$-direction and will be denoted by $\{Q^\prime_s\}$ and $\{\widetilde{Q}_s\}$ respectively. We will use these modules to construct the relevant $C^*$-algebras in the following section. As in Section \ref{locind}, we obtain classes $[\slashed{D}_{\widetilde{X}}]$ and $[\slashed{D}_{\widetilde{Y}\times \reals}]$ in $\widehat{K}_0(C^*_{L}(\widetilde{X};\Cl_n)^\Gamma)$ and $\widehat{K}_0(C^*_{L}(\widetilde{Y}\times \reals;\Cl_n)^\Lambda)$ respectively.
Note that $\widetilde{Y}\times \reals$ is a manifold with cylindrical end with base $\widetilde{Y}$. In the following we will define a fundamental class for the Dirac operators on $X$ and its cylindrical end in the $K$-theory groups of the $(\Gamma,\Lambda)$-equivariant localisation algebra and discuss indices and secondary invariants obtained from it. We will need the following
\begin{lemma}
The following diagrams are commutative
$$\begin{tikzcd}
\mathcal{S} \arrow{rd} \arrow{r} & C^*(\widetilde{X};\Cl_n)^{\Gamma,\reals_+} \arrow{d}{\ev_\infty}\\
   & C^*(Y^\prime \times \reals;\Cl_n)^{\Gamma\times\reals}
\end{tikzcd}
\begin{tikzcd}
\mathcal{S} \arrow{rd} \arrow{r} & C^*(\widetilde{Y}\times \reals;\Cl_n)^{\Lambda \times\reals} \arrow{d}{\psi}\\
   & C^*(Y^\prime \times \reals;\Cl_n)^{\Gamma\times\reals}.
\end{tikzcd}$$
Here $\mathcal{S} \rightarrow C^*(\widetilde{X};\Cl_n)^{\Gamma,\reals_+}$, $\mathcal{S} \rightarrow C^*(Y^\prime \times \reals;\Cl_n)^{\Gamma\times\reals}$ and $\mathcal{S} \rightarrow C^*(\widetilde{Y}\times \reals;\Cl_n)^{\Lambda\times\reals}$ denote the functional calculi for $\slashed{D}_{\widetilde{X}}$, $\slashed{D}_{Y^\prime \times \reals}$ and $\slashed{D}_{\widetilde{Y}\times \reals}$ respectively.
\label{lemma:funccalcphievinfty}
\end{lemma}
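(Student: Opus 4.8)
The plan is to verify both diagrams on a dense $\ast$-subalgebra of $\mathcal{S}$ and then pass to the limit. Put $\mathcal{A} \coloneqq \{f \in \mathcal{S} : \widehat{f} \in C_c^\infty(\reals)\}$; this is a dense $\ast$-subalgebra of $\mathcal{S}=C_0(\reals)$ which respects the $\mathbb{Z}_2$-grading ($f$ is even or odd exactly when $\widehat{f}$ is), so it suffices to treat $f \in \mathcal{A}$, since $\ev_\infty$ (Proposition \ref{prop:evatinfty}), the map $\psi$, and the functional-calculus homomorphisms $\mathcal{S}\to\cdots$ are all continuous. The point of working with such $f$ is that, via $f(\slashed{D}) = \frac{1}{2\pi}\int_\reals \widehat{f}(s)\, e^{is\slashed{D}}\, ds$ together with the unit propagation speed of $e^{is\slashed{D}}$ on a complete manifold, $f(\slashed{D})$ has finite propagation, bounded by $\sup\{|s| : s \in \supp\widehat{f}\}$; with local compactness (the Dirac operators here have locally compact resolvent) and equivariance this puts $f(\slashed{D}_{\widetilde{X}})$, $f(\slashed{D}_{\widetilde{Y}\times\reals})$, $f(\slashed{D}_{Y^\prime\times\reals})$ in $\reals(\widetilde{X};\Cl_n)^\Gamma$, $\reals(\widetilde{Y}\times\reals;\Cl_n)^\Lambda$, $\reals(Y^\prime\times\reals;\Cl_n)^\Gamma$ respectively. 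Because the metrics are products on the cylindrical ends, $f(\slashed{D}_{\widetilde{X}})$ is moreover \emph{exactly} $\reals_+$-invariant far enough out on the end (for propagation reasons) and $f(\slashed{D}_{\widetilde{Y}\times\reals})$, $f(\slashed{D}_{Y^\prime\times\reals})$ are exactly $\reals$-invariant, so these operators lie in $\reals(\widetilde{X};\Cl_n)^{\Gamma,\reals_+}$ and in the corresponding $\ast$-algebras of finite-propagation $\reals$-invariant operators on the cylinders; in particular the explicit formula of Proposition \ref{prop:defevatinfty} and the algebraic description of $\psi$ are available.

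For the left-hand diagram and $f \in \mathcal{A}$, I would compute $\ev_\infty(f(\slashed{D}_{\widetilde{X}}))$ directly from Proposition \ref{prop:defevatinfty}: for compactly supported $v \in L^2(\slashed{\mathfrak{S}}_{Y^\prime\times\reals})$ one has $\ev_\infty(f(\slashed{D}_{\widetilde{X}}))v = \lim_{s\to\infty} Q^\prime_{-s}\, i\, f(\slashed{D}_{\widetilde{X}})\, i^*\, Q^\prime_s v$, where $i$ is the unitary of Definition \ref{def:xyiotamod}, induced by the collar isometry $\iota^\prime$, which identifies the restriction of $\slashed{D}_{\widetilde{X}}$ to the end with the restriction of $\slashed{D}_{Y^\prime\times\reals}$ to $Y^\prime\times\reals_+$ as differential operators. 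For $s$ large, $Q^\prime_s v$ is supported so far out in $Y^\prime\times\reals_+$ that the wave flow $e^{is^\prime\slashed{D}_{\widetilde{X}}}$ applied to $i^* Q^\prime_s v$ stays, for every $s^\prime \in \supp\widehat{f}$, inside the interior of the collar, where the two Dirac operators coincide under $i$; by finite propagation speed and uniqueness for the wave equation there, $i\, f(\slashed{D}_{\widetilde{X}})\, i^*\, Q^\prime_s v = f(\slashed{D}_{Y^\prime\times\reals})\, Q^\prime_s v$. Applying $Q^\prime_{-s}$ and using $\reals$-invariance of $\slashed{D}_{Y^\prime\times\reals}$ yields $f(\slashed{D}_{Y^\prime\times\reals}) v$, so the net is eventually constant and $\ev_\infty(f(\slashed{D}_{\widetilde{X}})) = f(\slashed{D}_{Y^\prime\times\reals})$. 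Density of $\mathcal{A}$ in $\mathcal{S}$ and continuity then give the first diagram.

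For the right-hand diagram, recall that the change-of-group map $\psi \colon C^*(\widetilde{Y}\times\reals;\Cl_n)^{\Lambda\times\reals} \to C^*(Y^\prime\times\reals;\Cl_n)^{\Gamma\times\reals}$ of \cite{CWY}, \cite{SS18} is, at the algebraic level, assembled from two steps: averaging a $\Lambda$-equivariant operator over $\kernel\varphi$ to obtain a $\Lambda/\kernel\varphi$-equivariant operator on the quotient cover $(\widetilde{Y}\times\reals)/\kernel\varphi$, followed by an induction along $\Lambda/\kernel\varphi \hookrightarrow \Gamma$ realising $Y^\prime\times\reals$ as the induced cover. Both steps are implemented by local isometries and leave the underlying differential operator unchanged, so the metric — and hence $\slashed{D}_{\widetilde{Y}\times\reals}$ — descends and induces up to $\slashed{D}_{Y^\prime\times\reals}$. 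Since finite-propagation functional calculus of a Dirac operator is locally determined (again via $f(\slashed{D}) = \frac{1}{2\pi}\int\widehat{f}(s)e^{is\slashed{D}}ds$ and finite propagation speed), it commutes with these operations, so $\psi(f(\slashed{D}_{\widetilde{Y}\times\reals})) = f(\slashed{D}_{Y^\prime\times\reals})$ for $f \in \mathcal{A}$, and then for all $f \in \mathcal{S}$ by density and continuity of $\psi$.

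The main obstacle is the locality argument behind the left-hand diagram: one must carefully track $i$, the partial isometries $Q^\prime_s$, and the various propagation constants to confirm that for $f\in\mathcal{A}$ the operators $f(\slashed{D}_{\widetilde{X}})$ and $f(\slashed{D}_{Y^\prime\times\reals})$ genuinely agree on vectors supported sufficiently far out along the end — equivalently, that the wave flow of such an $f$ never leaves the product region during the bounded range of times involved. This is where the geometric hypothesis (the metric being collared on the end) actually enters, but the required estimates are of the same type as those already carried out in the proofs of Propositions \ref{prop:defevatinfty} and \ref{prop:evatinfty}.
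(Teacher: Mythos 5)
Your proposal is correct and follows essentially the same route as the paper: restrict to functions with compactly supported Fourier transform, use unit propagation speed and the local dependence of $f(\slashed{D})$ on the metric to show that the net defining $\ev_\infty(f(\slashed{D}_{\widetilde{X}}))$ is eventually constant and equals $f(\slashed{D}_{Y^\prime\times\reals})$, and use the locality of $\psi$ for the second diagram. The only (harmless) divergence is in the second diagram: the paper verifies the identity only for $\widehat{f}$ supported in a small interval $(-\epsilon^\prime,\epsilon^\prime)$ via the pushdown/lift bijections on the $\epsilon$-propagation parts of the Roe algebras (so it must invoke that such functions generate $\mathcal{S}$ as a $C^*$-algebra rather than span a dense subspace), whereas you argue directly from the averaging-plus-induction description of $\psi$ for all of $\mathcal{A}$ and conclude by density — for that to be rigorous you should make explicit that the compatibility of the averaging step with functional calculus is the method-of-images identity for the wave kernel under a Riemannian covering, not merely conjugation by a local isometry.
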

\begin{proof}
First note that the isometry $\iota^\prime$ allows us to identify the $\Cl_n$-spinor bundles over $Y^\prime \times \reals_+$ and $Y^\prime_\infty$, which in turn gives rise to the unitary $i^\prime:\chi_{Y^\prime_\infty}L^2(\slashed{\mathfrak{S}}_{\widetilde{X}}) \rightarrow \chi_{Y \times \reals_+}L^2(\slashed{\mathfrak{S}}_{Y^\prime \times \reals})$. Let $v \in L^2(\slashed{\mathfrak{S}}_{Y^\prime \times \reals})$ be compactly supported. For $f \in \mathcal{S}$ whose Fourier transform is supported in $(-r,r)$, it is well known that $f(\slashed{D}_{\widetilde{X}})$ and $f(\slashed{D}_{Y^\prime \times \reals})$ have propagation less than $r$ and depend on the $r$-local geometry in the sense that $f(\slashed{D}_{\widetilde{X}})w$ and $f(\slashed{D}_{Y^\prime \times \reals})v$ depend only on the Riemannian metric in the $r$-neighbourhood of the supports of $w$ and $v$ respectively. For $v \in L^2(\slashed{\mathfrak{S}}_{Y^\prime \times \reals})$ with compact support pick $s_0$ such that $Q^\prime_sv$ is supported in $Y^\prime \times [2r,\infty)$ for all $s>s_0$. The previous observation then implies that $if(\slashed{D}_{\widetilde{X}})i^*Q^\prime_sv = f(\slashed{D}_{Y^\prime \times \reals})Q^\prime_sv$ for all $s>s_0$. Hence
$$\lim_sQ^\prime_{-s}if(\slashed{D}_{\widetilde{X}})i^*Q^\prime_sv = \lim_sQ^\prime_{-s}f(\slashed{D}_{Y^\prime \times \reals})Q^\prime_sv.$$
However, because the Riemannian metric on $Y^\prime \times \reals$ is $\reals$-invariant, $Q^\prime_s$ commutes with the Dirac operator and its functions. This implies that $Q^\prime_{-s}f(\slashed{D}_{Y^\prime \times \reals})Q^\prime_sv = f(\slashed{D}_{Y^\prime \times \reals})v$ and shows that for $f$ with compactly supported Fourier transform $\ev_\infty(f(\slashed{D}_{\widetilde{X}})) = f(\slashed{D}_{Y^\prime \times \reals})$. The commutativity of the left diagram then follows from the fact that the functions in $\mathcal{S}$ with compactly supported Fourier transform form a dense subset.

Now we show the commutativity of the right diagram. First we need to recall one of the main properties of the map $\psi:C^*(\widetilde{Y}\times \reals;\Cl_n)^{\Lambda} \rightarrow C^*(Y^\prime \times \reals;\Cl_n)^\Gamma$.
Since all the covers are assumed to be nice one has bijections $C^*(\widetilde{Y}\times \reals;\Cl_n)^{\Lambda \times \reals}_\epsilon \cong C^*(Y \times \reals;\Cl_n)^\reals_\epsilon$ and $C^*(Y^\prime \times \reals;\Cl_n)^{\Gamma \times \reals}_\epsilon \cong C^*(Y\times \reals;\Cl_n)^\reals_\epsilon$, where $C^*(Y \times \reals;\Cl_n)^\reals$ is constructed using $L^2(\slashed{\mathfrak{S}}_{Y\times \reals})$ as the $Y\times \reals$-module, $\epsilon$ is a sufficiently small positive real number, and $C^*(\cdot)^\cdot_\epsilon$ denotes the set of elements in the corresponding Roe algebra which have propagation less than $\epsilon$. The bijections are given by pushdowns and lifts of operators on different covers. Furthermore, $\psi$ makes the diagram
$$\begin{tikzcd}
C^*(\widetilde{Y}\times \reals;\Cl_n)^{\Lambda \times \reals}_\epsilon \arrow{rd}{\cong} \arrow{r}{\psi} & C^*(Y^\prime \times \reals;\Cl_n)^{\Gamma \times \reals}_\epsilon \arrow{d}{\cong}\\
   & C^*(Y \times \reals;\Cl_n)^{\reals}_\epsilon.
\end{tikzcd}$$
commutative. Let $f\in \mathcal{S}$ have a Fourier transform which is supported in $(-\epsilon^\prime,\epsilon^\prime)$, with $\epsilon^\prime$ sufficiently small. The observation that $f$ applied to the different Dirac operators depends only on the $\epsilon^\prime$-local geometry and the niceness of covers imply that $f(\slashed{D}_{\widetilde{Y}\times \reals}), f(\slashed{D}_{Y \times \reals})$ and $f(\slashed{D}_{Y^\prime \times \reals})$ correspond to each other under the pushdown/lift maps. The commutativity of the latter diagram then implies that for $f$ with the above property $\psi(f(\slashed{D}_{\widetilde{Y}\times \reals})) = f(\slashed{D}_{Y^\prime \times \reals})$. The commutativity of the right diagram in the claim of the lemma then follows from the fact, that the $C^*$-subalgebra of $\mathcal{S}$ generated by functions whose Fourier transform is supported in a fixed interval $(-C,C)$ is the whole of $\mathcal{S}$, since it separates points.
\end{proof}
Lemma \ref{lemma:funccalcphievinfty} allows us to make the following
\begin{definition}
The $(\Gamma,\Lambda)$-fundamental class of $X$ is the class $[\slashed{D}_{\widetilde{X},\widetilde{Y}}] \in \widehat{K}_0(C^*_L(\widetilde{X};\Cl_n)^{\Gamma,\reals_+,\Lambda}) \cong K_n(C^*_L(\widetilde{X})^{\Gamma,\reals_+,\Lambda})$ defined by 
$$\varphi_{\slashed{D}_{\widetilde{X},\widetilde{Y}}}: \mathcal{S} \rightarrow C^*_L(\widetilde{X};\Cl_n)^{\Gamma,\reals_+,\Lambda}, f \mapsto ( t \mapsto (f(\frac{1}{t}\slashed{D}_{\widetilde{X}}),f(\frac{1}{t}\slashed{D}_{\widetilde{Y} \times \reals})).$$ The $(\Gamma, \Lambda)$-index of the Dirac operator associated to the map $\eta:(X,Y_{\infty} \coloneqq \iota(Y\times\reals_+))\rightarrow (B\Gamma,B\Lambda)$ as above is defined to be the image of $[\slashed{D}_{\widetilde{X},\widetilde{Y}}]$ under the map $(\ev_1)_*:K_*(C^*_{L}(\widetilde{X})^{\Gamma,\reals_+,\Lambda}) \rightarrow K_*(C^*(\widetilde{X})^{\Gamma,\reals_+,\Lambda})$.
\end{definition}
\subsection{Application to Existence and Classification of Positive Scalar Curvature Metrics}
Suppose that the scalar curvature of the metric $g$ on $X$ is bounded from below by $\epsilon$. The same then holds for the lifts of $g$ to various covers of $X$ and $Y_{(\infty)}$. This implies that the spectra of the various Dirac operators considered here do not intersect the interval $(-\frac{\sqrt{\epsilon}}{4},\frac{\sqrt{\epsilon}}{4})$. Let $h$ be a homotopy inverse to the inclusion $\mathcal{S}(-\frac{\sqrt{\epsilon}}{4},\frac{\sqrt{\epsilon}}{4}) \rightarrow \mathcal{S}$. 
\begin{definition}
Let $g$ be as above. The $(\Gamma, \Lambda)$-rho-invariant of $g$ is the class in $K_0(C^*_{L,0}(\widetilde{X};\Cl_n)^{\Gamma,\reals_+,\Lambda}) \cong K_n(C^*_{L,0}(\widetilde{X})^{\Gamma,\reals_+,\Lambda})$ defined by the morphism
$$\varphi_{\slashed{D}_{\widetilde{X},\widetilde{Y}}} \circ h: \mathcal{S} \rightarrow C^*_{L,0}(\widetilde{X})^{\Gamma,\reals_+,\Lambda}$$
and will be denoted by $\rho^{\Gamma,\Lambda}(g)$.
\end{definition}
Clearly, $\rho^{\Gamma,\Lambda}(g)$ lifts $[D_{\widetilde{X},\widetilde{Y}}]$ and by the exactness of the sequence
$$\cdots \rightarrow K_*(C^*_{L,0}(\widetilde{X})^{\Gamma,\reals_+,\Lambda}) \rightarrow K_*(C^*_{L}(\widetilde{X})^{\Gamma,\reals_+,\Lambda}) \rightarrow K_*(C^*(\widetilde{X})^{\Gamma,\reals_+,\Lambda}) \rightarrow \cdots$$
we have the following
\begin{proposition}

If the metric on $X$ has positive scalar curvature then the $(\Gamma,\Lambda)$-index of the Dirac operator vanishes.
\label{prop:vanishing}
\end{proposition}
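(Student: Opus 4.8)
The plan is to deduce the vanishing formally from exactness of the surgery-type long exact sequence
$$\cdots \rightarrow K_*\bigl(C^*_{L,0}(\widetilde{X})^{\Gamma,\reals_+,\Lambda}\bigr) \xrightarrow{j_*} K_*\bigl(C^*_{L}(\widetilde{X})^{\Gamma,\reals_+,\Lambda}\bigr) \xrightarrow{(\ev_1)_*} K_*\bigl(C^*(\widetilde{X})^{\Gamma,\reals_+,\Lambda}\bigr) \rightarrow \cdots,$$
where $j$ denotes the inclusion of the structure algebra into the localisation algebra. Since the $(\Gamma,\Lambda)$-index is by definition $(\ev_1)_*[\slashed{D}_{\widetilde{X},\widetilde{Y}}]$ and, by exactness, $\operatorname{im}(j_*)=\ker(\ev_1)_*$, it suffices to exhibit $[\slashed{D}_{\widetilde{X},\widetilde{Y}}]$ as an element of $\operatorname{im}(j_*)$; the natural candidate preimage is the rho-invariant $\rho^{\Gamma,\Lambda}(g)$.

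The one thing to check is therefore that $j_*\rho^{\Gamma,\Lambda}(g)=[\slashed{D}_{\widetilde{X},\widetilde{Y}}]$. By construction $\rho^{\Gamma,\Lambda}(g)$ is represented by $\varphi_{\slashed{D}_{\widetilde{X},\widetilde{Y}}}\circ h\colon\mathcal{S}\to C^*_{L,0}(\widetilde{X};\Cl_n)^{\Gamma,\reals_+,\Lambda}$, where $h$ is a homotopy inverse of the inclusion $k\colon\mathcal{S}(-\tfrac{\sqrt{\epsilon}}{4},\tfrac{\sqrt{\epsilon}}{4})\hookrightarrow\mathcal{S}$. Here positive scalar curvature is used only to make this definition legitimate: by the Schr\"odinger--Lichnerowicz formula the spectra of $\slashed{D}_{\widetilde{X}}$ and $\slashed{D}_{\widetilde{Y}\times\reals}$ avoid $(-\tfrac{\sqrt{\epsilon}}{4},\tfrac{\sqrt{\epsilon}}{4})$, so that functions supported there annihilate both operators and $\varphi_{\slashed{D}_{\widetilde{X},\widetilde{Y}}}\circ h$ indeed takes values in $C^*_{L,0}$ (its value at $t=1$ is $0$). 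Postcomposing with $j$ and noting that $j$ carries $\varphi_{\slashed{D}_{\widetilde{X},\widetilde{Y}}}\circ h$ to the restriction along $k$ of the homomorphism $\varphi_{\slashed{D}_{\widetilde{X},\widetilde{Y}}}\colon\mathcal{S}\to C^*_{L}(\widetilde{X};\Cl_n)^{\Gamma,\reals_+,\Lambda}$ that defines the fundamental class, we get $j\circ(\varphi_{\slashed{D}_{\widetilde{X},\widetilde{Y}}}\circ h)=\varphi_{\slashed{D}_{\widetilde{X},\widetilde{Y}}}\circ(k\circ h)$ as $\ast$-homomorphisms $\mathcal{S}\to C^*_{L}(\widetilde{X};\Cl_n)^{\Gamma,\reals_+,\Lambda}$. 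Since $k\circ h$ is homotopic to $\iden_{\mathcal{S}}$ and, in Trout's description recalled in Section~2, the $K$-theory class attached to a homomorphism out of $\mathcal{S}$ depends only on its homotopy class, this gives $j_*\rho^{\Gamma,\Lambda}(g)=[\varphi_{\slashed{D}_{\widetilde{X},\widetilde{Y}}}]=[\slashed{D}_{\widetilde{X},\widetilde{Y}}]$; in other words $\rho^{\Gamma,\Lambda}(g)$ lifts the fundamental class.

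Putting the pieces together: $[\slashed{D}_{\widetilde{X},\widetilde{Y}}]=j_*\rho^{\Gamma,\Lambda}(g)\in\operatorname{im}(j_*)=\ker(\ev_1)_*$, whence $(\ev_1)_*[\slashed{D}_{\widetilde{X},\widetilde{Y}}]=0$, which is exactly the vanishing of the $(\Gamma,\Lambda)$-index. I do not expect any genuine obstacle; the statement is essentially a one-line consequence of exactness of the analytic surgery sequence for spaces with cylindrical ends, and the only mildly technical ingredient is the homotopy-invariance step identifying $j_*\rho^{\Gamma,\Lambda}(g)$ with the fundamental class, which is handled by the generalities on $K$-theory of graded $C^*$-algebras recalled earlier in the paper.
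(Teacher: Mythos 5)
Your argument is exactly the paper's: the paper observes that $\rho^{\Gamma,\Lambda}(g)$ lifts the fundamental class $[\slashed{D}_{\widetilde{X},\widetilde{Y}}]$ and then invokes exactness of the sequence $K_*(C^*_{L,0}(\widetilde{X})^{\Gamma,\reals_+,\Lambda}) \rightarrow K_*(C^*_{L}(\widetilde{X})^{\Gamma,\reals_+,\Lambda}) \rightarrow K_*(C^*(\widetilde{X})^{\Gamma,\reals_+,\Lambda})$. You merely spell out the lifting step (via the homotopy $k\circ h\simeq\iden_{\mathcal{S}}$) that the paper dismisses with ``clearly''; the proof is correct.
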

One can define a notion of concordance for positive scalar curvature metrics on manifolds with cylindrical ends. Let $g$ and $g^\prime$ be such metrics on $X$. They are called concordant if there exist a positive scalar curvature metric $G$ on $X \times \reals$ and a map $j:Y\times \reals \times\reals_+ \rightarrow Y_\infty \times \reals$ which makes $(X\times \reals, Y\times \reals, j)$ a manifold with cylindrical end and such that $G$ restricted to $X\times (1,\infty)$ is $g + dt^2$ and restricted to $X \times (-\infty,0)$ is $g^\prime + dt^2$. Using the strategy of Zeidler in \cite{RZA} and by replacing the usual Roe, localisation and structure algebras by their $(\Gamma,\Lambda)$-invariant counterparts one can without much difficulty prove a partitioned manifold index theorem for secondary invariants for manifolds with cylindrical ends and prove the concordance invariance of the $(\Gamma,\Lambda)$-rho-invariant. However we refrain from discussing this, since it does not entail any novelties.

More generally following the approach of \cite{RZA} we define partial $(\Gamma,\Lambda)$-rho-invariants associated to metrics having positive scalar curvature outside of a given subset $Z$ of $X$. Denote by $Z^\prime$ and and $Z^{\prime\prime}$ the preimages of $Z$ and $(Z \cap \iota({Y\times \{1\}})) \times \reals$ under the covering maps $\widetilde{X} \rightarrow X$ and $\widetilde{Y}\times \reals \rightarrow Y \times \reals$ respectively. Denote by $C^*(Z^\prime \subset \widetilde{X})^{\Gamma,\reals_+,\Lambda}$ the $C^*$-subalgebra of $C^*(\widetilde{X})^{\Gamma,\reals_+,\Lambda}$ consisting of elements $(T_1,T_2)$ with $T_1 \in C^*(Z^\prime \subset \widetilde{X})^\Gamma$ and $T_2 \in C^*(Z^{\prime\prime} \subset \widetilde{Y}\times \reals)^\Lambda$. Denote by $C^*_{L,Z^\prime}(\widetilde{X})^{\Gamma,\reals_+,\Lambda}$ the preimage of $C^*(Z^\prime \subset \widetilde{X})^{\Gamma,\reals_+,\Lambda}$ under the evaluation at $1$ map. The justification for the following definition is provided by \cite{RP}*{Lemma 2.3}.
\begin{definition}
Given a metric $g$ on $X$ which is collared at the boundary whose scalar curvature is bounded below by $\epsilon > 0$ outside of a subset $Z$ define the class $\rho^{\Gamma,\Lambda}_{Z}(g)$ by the morphism
$$\varphi_{\slashed{D}_{\widetilde{X},\widetilde{Y}}} \circ h: \mathcal{S} \rightarrow C^*_{L,Z^\prime}(\widetilde{X})^{\Gamma,\reals_+,\Lambda}.$$
\end{definition}

Another higher index theoretic notion which has been successfully used to obtain information about the size of the space of positive scalar curvature metrics on closed manifolds is the higher index difference, which gives rise to a map from the space of positive scalar curvature metrics to the $K$-theory of the group $C^*$-algebra of the manifold. We now show that one can easily define a $(\Gamma,\Lambda)$-index difference of two positive scalar curvature metrics for manifolds with cylindrical ends. This becomes particularly interesting after we discuss the application of the above machinery to relative higher index theory in the next section. Let $g_0$ and $g_1$ be two metrics on $X$ with scalar curvature bounded below by $\epsilon > 0$ which are collared on the cylindrical end. Define a metric $G$ on $X \times \reals$ which restricts to $g_0 \oplus dt^2$ and $g_1 \oplus dt^2$ on $X \times [0,\infty)$ and $X \times (-\infty, -1)$ respectively and which is collared on the cylindrical end in the $X$-direction.
\begin{definition}
Let $g_0,g_1$ and $G$ be as above. The $(\Gamma,\Lambda)$-index difference of $g_0$ and $g_1$ is the image of $\rho^{\Gamma,\Lambda}_{X \times [0,1]}(G)$ under the composition
$$K_{n+1}(C^*_{L,\widetilde{X} \times [0,1]}(\widetilde{X})^{\Gamma,\reals_+,\Lambda}) \xrightarrow{(\ev_1)_*} K_{n+1}(C^*(\widetilde{X} \times [0,1] \subset \widetilde{X} \times \reals)^{\Gamma,\reals_+,\Lambda})$$
$$\rightarrow K_{n+1}(C^*(\widetilde{X})^{\Gamma,\reals_+,\Lambda}),$$
where the last map is induced by projection on $\widetilde{X}$. It will be denoted by $\ind^{\Gamma,\Lambda}(g_0,g_1)$.
\end{definition}

\subsection{Relationship to the Relative Index of Chang, Weinberger and Yu}
As mentioned above, the relative index map of Chang, Weinberger and Yu for manifolds with boundary takes values in mapping cones of equivariant Roe algebras. Note that given a manifold $(X,Y,\iota)$ with cylindrical end, $\overline{X} \coloneqq X \setminus \iota(Y \times (0,\infty))$ is a manifold with boundary $Y$. By restriction, we obtain a map $\eta:(\overline{X},Y) \rightarrow (B\Gamma,B\Lambda)$. To avoid confusion in the following we denote the ``change of group map" 
\begin{equation}\label{eq:beta}C^*_{(L,(0))}(\widetilde{Y})^\Lambda \rightarrow C^*_{(L,(0))}(\widetilde{\overline{X}})^\Gamma \tag{\(\star\)}
\end{equation} introduced in Section \ref{relind} by $\beta_{(L,(0))}$. All the other change of group map will still be denoted by $\psi_{(L,(0))}$. In the following, we will see that there exists a commutative diagram of exact sequences
$$\begin{tikzcd}
   K_*(C^*_{L,0}(\widetilde{X})^{\Gamma,\reals_+,\Lambda}) \arrow{r} \arrow{d} & K_*(C^*_{L}(\widetilde{X})^{\Gamma,\reals_+,\Lambda}) \arrow{r} \arrow{d} & K_*(C^*(\widetilde{X})^{\Gamma,\reals_+,\Lambda}) \arrow{d}\\
   K_*(SC_{\beta_{L,0}}) \arrow{r} & K_*(SC_{\beta_L}) \arrow{r} & K_*(SC_{\beta}).
\end{tikzcd}$$
\begin{remark}\label{rem:phievinfty}
In the following, we will denote by $(\psi, \ev_\infty)$ the homomorphism
$$
C^*(\widetilde{Y}_\infty)^{\Lambda,\reals_+} \rightarrow  C^*(\widetilde{X})^{\Gamma,\reals_+,\Lambda} \qquad T \mapsto (\psi(T),\ev_\infty(T))$$
where we have used the realisation of $C^*(\widetilde{X})^{\Gamma,\reals_+,\Lambda}$ as a subalgebra of $C^*(\widetilde{X})^{\Gamma,\reals_+} \times C^*(\widetilde{Y}\times \reals)^{\Lambda\times\reals}$ (see Remark \ref{rem:pullback}). By pointwise application of $(\psi,\ev_{\infty})$ we also define maps
$$(\psi_{L},{\ev_\infty}_{L})\colon C^*_L(\widetilde{Y}_\infty)^{\Lambda,\reals_+} \rightarrow  C^*_L(\widetilde{X})^{\Gamma,\reals_+,\Lambda}$$ and
$$(\psi_{L},{\ev_\infty}_{L,0})\colon C^*_{L,0}(\widetilde{Y}_\infty)^{\Lambda,\reals_+} \rightarrow  C^*_{L,0}(\widetilde{X})^{\Gamma,\reals_+,\Lambda}$$
Furthermore, the arrows $C^*(\widetilde{X})^{\Gamma,\reals_+,\Lambda} \rightarrow C^*(\widetilde{Y}\times \reals)^{\Lambda\times\reals}$ and $C^*(\widetilde{\overline{X}} \subset \widetilde{X})^\Gamma \rightarrow C^*(\widetilde{X})^{\Gamma,\reals_+,\Lambda}$ will always denote the projection map $(S,T) \rightarrow T$ and the injection map $S \mapsto (S,0)$ respectively (again we refer the reader to Remark \ref{rem:pullback}).
\end{remark}
\begin{proposition}
The following is a commutative diagram of short exact sequences
$$\begin{tikzcd}
   0 \arrow{r} & C^*(\widetilde{Y} \subset \widetilde{Y}_\infty)^\Lambda \arrow{r} \arrow{d}{\psi} & C^*(\widetilde{Y}_\infty)^{\Lambda,\reals_+} \arrow{r}{\ev_{\infty}} \arrow{d}{(\psi,\ev_\infty)} & C^*(\widetilde{Y}\times \reals)^{\Lambda\times\reals} \arrow{r} \arrow{d}{\iden} & 0\\
   0 \arrow{r} & C^*(\widetilde{\overline{X}} \subset \widetilde{X})^\Gamma \arrow{r} & C^*(\widetilde{X})^{\Gamma,\reals_+,\Lambda} \arrow{r} & C^*(\widetilde{Y}\times \reals)^{\Lambda\times\reals} \arrow{r} \arrow{r} & 0.
\end{tikzcd}$$
Analogous diagrams exist when $C^*$ is replaced by $C^*_{L}$ and $C^*_{L,0}$.
\label{prop:excprep}
\end{proposition}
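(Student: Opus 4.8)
The plan is to establish the two rows as short exact sequences and then check that the two squares commute; the bottom row will be almost formal once the top one is, since by construction (Remark \ref{rem:pullback}) the algebra $C^*(\widetilde{X})^{\Gamma,\reals_+,\Lambda}$ is the fibre product of $C^*(\widetilde{X})^{\Gamma,\reals_+}$ and $C^*(\widetilde{Y}\times\reals)^{\Lambda\times\reals}$ over $C^*(Y^\prime\times\reals)^{\Gamma\times\reals}$. As a first step I would show that the two evaluation-at-infinity maps $\ev_\infty\colon C^*(\widetilde{Y}_\infty)^{\Lambda,\reals_+}\to C^*(\widetilde{Y}\times\reals)^{\Lambda\times\reals}$ and $\ev_\infty\colon C^*(\widetilde{X})^{\Gamma,\reals_+}\to C^*(Y^\prime\times\reals)^{\Gamma\times\reals}$ are surjective, by exhibiting an explicit, not necessarily multiplicative, splitting $\sigma$: given an $\reals$-invariant operator $A$ on the cylinder, compress it to the half-cylinder, transport it through the unitary $i$ of Definition \ref{def:xyiotamod} identifying the end with part of the cylindrical module, and (if there is any) extend by zero over the rest of the space with cylindrical end. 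Compression does not change the (finite) propagation and preserves local compactness and equivariance, while far out on the end the result literally agrees with $A$, so it is asymptotically $\reals_+$-invariant and its evaluation at infinity is $A$; continuity of $\sigma$ for the quotient completion is checked exactly as in the proof of Proposition \ref{prop:evatinfty}, via compatibility with the maps to the $\Gamma/N$-quotients. As $\ev_\infty$ is then a $*$-homomorphism with dense range it is surjective, and in the bottom row the projection $(S,T)\mapsto T$ is surjective because, given $T$, one lifts $\psi(T)$ through $\ev_\infty$ to obtain some $S$ with $(S,T)$ in the fibre product.

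The technical heart is the identification of the kernels: $\ker\ev_\infty=C^*(\widetilde{Y}\subset\widetilde{Y}_\infty)^\Lambda$ in the top row, and by the same argument $\ker\ev_\infty=C^*(\widetilde{\overline{X}}\subset\widetilde{X})^\Gamma$ for $\widetilde{X}$, where $\widetilde{\overline{X}}=\widetilde{X}\setminus\iota^\prime(Y^\prime\times(0,\infty))$ plays for $\widetilde{X}$ the role that $\widetilde{Y}$ plays for the cylinder $\widetilde{Y}_\infty$. That the relative Roe algebra sits inside the kernel is immediate: an algebraic operator supported within a bounded neighbourhood of the core is annihilated by the characteristic functions of the far ends of the cylinder, hence is trivially asymptotically $\reals_+$-invariant with vanishing evaluation at infinity, and one notes that the closure of such operators inside the Roe algebra tailored to the end coincides with their closure inside the ambient Roe algebra. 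For the reverse inclusion I would first show, reading off the estimates in the proof of Proposition \ref{prop:defevatinfty}, that an \emph{algebraic} asymptotically $\reals_+$-invariant operator $T$ with $T^\infty=0$ satisfies $\|T\chi_R\|\to 0$ and (applying the same to $T^*$) $\|\chi_R T\|\to 0$ as $R\to\infty$; consequently $T$ is the norm limit of its cut-downs to bounded neighbourhoods of the core, which lie in the relative algebraic Roe algebra. To descend to the $C^*$-kernel I would use the splitting $\sigma$: since $\Phi:=\sigma\circ\ev_\infty$ satisfies $\ev_\infty\circ\Phi=\ev_\infty$, for algebraic $T$ the difference $T-\Phi(T)$ is again algebraic with vanishing evaluation at infinity, hence lies in the relative Roe algebra; writing an arbitrary element of $\ker\ev_\infty$ as $\lim_n T_n$ with $T_n$ algebraic, one has $\Phi(T_n)\to 0$, so $T=\lim_n(T_n-\Phi(T_n))$ lies in the relative Roe algebra as well.

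With the two preceding points the top row is exact. For the bottom row the projection $(S,T)\mapsto T$ is surjective and has kernel $\{(S,0):\ev_\infty(S)=0\}$, which by the kernel computation equals $C^*(\widetilde{\overline{X}}\subset\widetilde{X})^\Gamma$, embedded by $S\mapsto(S,0)$ (a legitimate element of the fibre product since $\ev_\infty(S)=0=\psi(0)$); so the bottom row is exact too. The right-hand square commutes on the nose, both composites sending $T$ to $\ev_\infty(T)$ by the definitions of $(\psi,\ev_\infty)$ and of the projection. In the left-hand square, on $S\in C^*(\widetilde{Y}\subset\widetilde{Y}_\infty)^\Lambda$ the top-then-right composite is $(\psi(S),\ev_\infty(S))=(\psi(S),0)$ by the kernel computation, and the left-then-bottom composite is $(\psi(S),0)$; the first components agree because $\psi$ is defined on algebraic Roe algebras by lifting operators to the $\Lambda$-cover and pushing them down to the $\Gamma$-cover, an operation compatible with being supported near a subspace, so it carries $C^*(\widetilde{Y}\subset\widetilde{Y}_\infty)^\Lambda$ into $C^*(\widetilde{\overline{X}}\subset\widetilde{X})^\Gamma$ compatibly with the inclusions into the larger algebras. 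The versions of the diagram with $C^*$ replaced by $C^*_L$ and by $C^*_{L,0}$ are then obtained by running the whole argument pointwise in $t\in[1,\infty)$, using that $\sigma$ and the cut-downs do not increase propagation and so preserve the defining conditions of the localisation and structure algebras.

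The main obstacle is the kernel computation in the second paragraph, and specifically the implication ``$T^\infty=0$ implies $\|T\chi_R\|\to 0$'' for algebraic $T$ together with the descent from algebraic operators to the full $C^*$-kernel; the latter relies on having the splitting $\sigma$ available and on its continuity for the quotient completion, which must be argued through the $\Gamma/N$-quotient description of the norm exactly as in Proposition \ref{prop:evatinfty}.
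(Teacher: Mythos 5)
Your proof is correct and follows the same overall architecture as the paper's: identify the kernel of $\ev_\infty$ with the relative Roe algebra, get surjectivity by compressing an $\reals$-invariant operator to the half-cylinder and using that the image of a $C^*$-homomorphism is closed, deduce the bottom row, and check commutativity by inspection. The one substantive difference is the kernel computation, which you rightly single out as the technical heart: the paper disposes of the inclusion $\ker\ev_\infty\subset C^*(\widetilde{Y}\subset\widetilde{Y}_\infty)^\Lambda$ by citing a lemma of Hanke--Pape--Schick, whereas you prove it directly. Your argument works: the uniform Cauchy estimate in the proof of Proposition \ref{prop:defevatinfty} does show that $T^\infty=0$ forces $\sup_s\|\chi_{Y_\infty}Ti^*Q_s v\|\to 0$ uniformly over unit vectors $v$ supported in $Y\times\reals_+$, which combined with finite propagation gives $\|T\chi_R\|\to 0$ (and $\|\chi_RT\|\to 0$ via $T^*$), so $T$ is the norm limit of its cut-downs $(1-\chi_R)T(1-\chi_R)$; and the descent from algebraic operators to the full $C^*$-kernel via the splitting $\Phi=\sigma\circ\ev_\infty$ is a clean device that the paper does not need because it quotes the reference. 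Your derivation of the bottom row from the fibre-product description of Remark \ref{rem:pullback} is also slightly tidier than the paper's ``similar arguments''; just note that the easy inclusion there (operators supported near $\widetilde{\overline{X}}$ are killed by $\chi_{\iota'(Y'\times[R,\infty))}$ for large $R$) is exactly where the condition $\lim_{R\to\infty}\dist(\iota'(Y'\times[R,\infty)),\widetilde{X}-Y'_\infty)=\infty$ of Definition \ref{def:cylend} enters, a point the paper flags explicitly. Finally, you are right to flag the quotient completion: all your norm estimates are a priori for the reduced norm, and both the continuity of $\sigma$ and the convergence of the cut-downs must be pushed through the $\Gamma/N$-quotient description of the norm as in Proposition \ref{prop:evatinfty}; the paper silently absorbs this into the citation.
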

\begin{proof}
We first show that the first row is exact. It follows immediately from the definition of $\ev_\infty$ that $\reals(\widetilde{Y} \subset \widetilde{Y}_\infty)^\Lambda$ is in its kernel. By continuity we get that $C^*(\widetilde{Y} \subset \widetilde{Y}_\infty)$ is in the kernel of $\ev_\infty$. Furthermore, \cite{HankePapeSchick}*{Lemma 3.12} implies that the kernel of $\ev_\infty$ is exactly $C^*(\widetilde{Y}\subset \widetilde{Y}_\infty)^\Lambda$. It remains to show that $\ev_\infty$ is surjective. For $T \in \reals(\widetilde{Y}\times \reals)^{\Lambda \times \reals}$, the operator $\chi_{\widetilde{Y} \times \reals_+}T\chi_{\widetilde{Y} \times \reals_+}$ maps to $T$ under $\ev_\infty$. The surjectivity then follows from the fact that the image of a homomorphism of $C^*$-algebras is closed. The exactness of the second row can be proven using similar arguments. However we note that the exactness in the middle uses the fact that $\lim_{R\rightarrow \infty} \dist(\iota(Y^\prime\times [R,\infty)),\widetilde{X} - Y^\prime_\infty) = \infty$ (see Definition \ref{def:cylend}). The commutativity of the diagram is a direct consequence of the definitions of the involved maps.
\end{proof}
\begin{remark}
	Denote by $C_{C^*(\widetilde{Y} \subset \widetilde{Y_\infty})^{\Lambda} \rightarrow C^*(\widetilde{\overline{X}} \subset \widetilde{X})^\Gamma}$, $C_{(\psi,\ev_{\infty})}$ and $C_{\iden}$ the mapping cones of the homomorphisms $\psi\colon C^*(\widetilde{Y} \subset \widetilde{Y_\infty})^{\Lambda} \rightarrow C^*(\widetilde{\overline{X}} \subset \widetilde{X})^\Gamma$, $(\psi,\ev_\infty)$ and the identity map $\iden\colon C^*(\widetilde{Y}\times\mathbb{R})^{\Lambda \times \mathbb{R}} \rightarrow C^*(\widetilde{Y}\times\mathbb{R})^{\Lambda \times \mathbb{R}}$ respectively. From the commutativity of the diagram of Proposition \ref{prop:excprep} we obtain the exact sequence
	$$0\rightarrow C_{C^*(\widetilde{Y} \subset \widetilde{Y_\infty})^{\Lambda} \rightarrow C^*(\widetilde{\overline{X}} \subset \widetilde{X})^\Gamma} \rightarrow C_{(\psi,\ev_\infty)}\rightarrow C_{\iden} \rightarrow 0.$$
	Analogous exact sequences exist when $C^*$ is replaced by $C^*_{L}$ and $C^*_{L,0}$.
\end{remark}
\begin{proposition}
The inclusion
$$C_{C^*(\widetilde{Y} \subset \widetilde{Y_\infty})^{\Lambda} \rightarrow C^*(\widetilde{\overline{X}} \subset \widetilde{X})^\Gamma} \rightarrow C_{(\psi,\ev_{\infty})}$$ gives rise to isomorphisms of $K$-theory groups. Analogous statements hold when $C^*$ is replaced by $C^*_{L}$ and $C^*_{L,0}$.
\end{proposition}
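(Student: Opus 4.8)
The plan is to read off the statement from the short exact sequence of mapping cones
$$0\rightarrow C_{C^*(\widetilde{Y} \subset \widetilde{Y_\infty})^{\Lambda} \rightarrow C^*(\widetilde{\overline{X}} \subset \widetilde{X})^\Gamma} \rightarrow C_{(\psi,\ev_\infty)}\rightarrow C_{\iden} \rightarrow 0$$
recorded in the remark above, combined with the observation that $C_{\iden}$, being the mapping cone of an isomorphism (the identity on $C^*(\widetilde{Y}\times\reals)^{\Lambda\times\reals}$), is contractible and hence has vanishing $K$-theory.

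First I would invoke the long exact sequence in $K$-theory associated to this extension of $C^*$-algebras. Since $K_*(C_{\iden}) = 0$, exactness forces the map
$$K_*\big(C_{C^*(\widetilde{Y} \subset \widetilde{Y_\infty})^{\Lambda} \rightarrow C^*(\widetilde{\overline{X}} \subset \widetilde{X})^\Gamma}\big)\longrightarrow K_*\big(C_{(\psi,\ev_\infty)}\big)$$
induced by the inclusion to be an isomorphism in every degree, which is exactly the assertion. The cases with $C^*$ replaced by $C^*_L$ or $C^*_{L,0}$ follow verbatim using the analogous extensions of mapping cones from the same remark. For completeness I would also recall why that extension is exact: it is obtained by applying the mapping-cone functor vertically to the commutative diagram of short exact sequences in Proposition \ref{prop:excprep}; writing $C_f = \{(a,b)\in A\oplus C_0((0,1],B) : b(1)=f(a)\}$, a short diagram chase using exactness of the rows and of $C_0((0,1],-)$ gives exactness of the sequence of cones, while the identification $C_{\iden_A}\cong C_0((0,1],A)$ exhibits $C_{\iden}$ as a cone, hence contractible.

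The argument is entirely formal, so there is no genuine obstacle; the only points that merit a moment's care are the exactness in the middle of the vertical mapping-cone sequence (already asserted in the preceding remark) and the verification that the completion and grading conventions used for $C^*_L$ and $C^*_{L,0}$ let the same reasoning go through — both routine given the setup already in place.
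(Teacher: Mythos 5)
Your argument is exactly the paper's proof: it invokes the short exact sequence of mapping cones from the preceding remark, observes that $C_{\iden}$ is contractible and hence has trivial $K$-theory, and concludes via the associated long exact sequence. The extra justification you give for exactness of the cone sequence is fine but not needed beyond what the remark already asserts.
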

\begin{proof}
Note that the mapping cone of the identity map on $ C^*(\widetilde{Y} \times \reals)^{\Lambda\times\reals}$ is contractible and thus has trivial $K$-theory. The statement then follows from the long exact sequence of $K$-theory groups associated to the short exact sequence of mapping cones
$$0\rightarrow C_{C^*(\widetilde{Y} \subset \widetilde{Y_\infty})^{\Lambda} \rightarrow C^*(\widetilde{\overline{X}} \subset \widetilde{X})^\Gamma} \rightarrow C_{(\psi,\ev_\infty)}\rightarrow C_{\iden} \rightarrow 0$$.
\end{proof}
\begin{remark}
	Recall that associated to a homomorphism $f:A\rightarrow B$ of $C^\ast$-algebras there is a mapping cone short exact sequence
	$$0 \rightarrow SB \rightarrow C_f \rightarrow A \rightarrow 0$$
	where $C_f$ denotes the mapping cone of $f$ and $S$ denotes the suspension. Up to applying Bott periodicity the inclusion $SB \rightarrow C_f$ gives rise to group homomorphisms $K_*(B) \rightarrow K_*(SC_f)$.
	In the following we apply these observations to the homomorphisms $\beta$, $\beta_L$ and $\beta_{L,0}$ (see \ref{eq:beta}) and the maps $(\psi,{\ev_\infty})$, $(\psi_{L},{\ev_\infty}_{L})$ and $(\psi_{L,0},{\ev_\infty}_{L,0})$ introduced in Remark \ref{rem:phievinfty}.
\end{remark}
\begin{remark}\label{rem:relabs}
	Using the inclusions $\widetilde{Y} \rightarrow \widetilde{Y}_\infty$ and $\widetilde{\overline{X}} \rightarrow \widetilde{X}$ and the Propositions \ref{prop:relabs} and \ref{prop:relabsl} we obtain isomorphisms
	$$K_*(C_{\beta_{(L,(0))}}) \xrightarrow{\cong} K_*(C_{C_{(L,(0))}^*(\widetilde{Y} \subset \widetilde{Y}_\infty)^{\Lambda} \rightarrow C_{(L,(0))}^*(\widetilde{\overline{X}} \subset \widetilde{X})^{\Gamma}}).$$
\end{remark}
\begin{proposition}
There is a commutative diagram of long exact sequences
$$\begin{tikzcd}
   K_*(C^*_{L,0}(\widetilde{X})^{\Gamma,\reals_+,\Lambda}) \arrow{r} \arrow{d} & K_*(C^*_{L}(\widetilde{X})^{\Gamma,\reals_+,\Lambda}) \arrow{r} \arrow{d} & K_*(C^*(\widetilde{X})^{\Gamma,\reals_+,\Lambda}) \arrow{d}\\
   K_*(SC_{\beta_{L,0}}) \arrow{r} & K_*(SC_{\beta_L}) \arrow{r} & K_*(SC_{\beta}),
\end{tikzcd}$$
where the vertical maps are given by the compositions
$$K_*(C^*_{(L,(0))}(\widetilde{X})^{\Gamma,\mathbb{R}_+,\Lambda}) \rightarrow
 K_*(SC_{(\psi_{(L,(0))},{\ev_\infty}_{(L,(0))})})$$ 
 $$\cong
 K_*(SC_{C_{(L,(0))}^*(\widetilde{Y} \subset \widetilde{Y}_\infty)^{\Lambda} \rightarrow C_{(L,(0))}^*(\widetilde{\overline{X}} \subset \widetilde{X})^{\Gamma}})
 \cong K_*(SC_{\beta_{(L,(0))}}).$$
 See Remark \ref{rem:relabs} for the definition of the last isomorphism.
 \label{prop:maptoCWY}
\end{proposition}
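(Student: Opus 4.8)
The plan is to present both rows of the diagram as the $K$-theory long exact sequences of short exact sequences of $C^\ast$-algebras, and to join the two short exact sequences by a zig-zag of morphisms of short exact sequences, several of which induce isomorphisms on $K$-theory in every degree. Once such a zig-zag is in place, commutativity of the whole ladder, including compatibility with the connecting homomorphisms, follows formally from naturality of the long exact sequence, and by construction the induced vertical maps are exactly the composites written in the statement.

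First I would fix the two ``outer'' short exact sequences. The top row is the long exact sequence of
$$0 \to C^*_{L,0}(\widetilde{X})^{\Gamma,\reals_+,\Lambda} \to C^*_{L}(\widetilde{X})^{\Gamma,\reals_+,\Lambda} \xrightarrow{\ev_1} C^*(\widetilde{X})^{\Gamma,\reals_+,\Lambda} \to 0,$$
and the bottom row is the long exact sequence of $0 \to SC_{\beta_{L,0}} \to SC_{\beta_L} \xrightarrow{\ev_1} SC_{\beta} \to 0$, the relative Higson--Roe sequence built from the change-of-group maps $\beta_{(L,(0))}$ of \eqref{eq:beta} exactly as the sequence for $\psi$ is built in Section \ref{relind}. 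Between these I would interpose three further short exact sequences of mapping cones, each formed for the three decorations $\bullet \in \{\emptyset, L, (L,0)\}$ simultaneously: the sequence $0 \to C_{(\psi_{L,0},{\ev_\infty}_{L,0})} \to C_{(\psi_L,{\ev_\infty}_L)} \to C_{(\psi,\ev_\infty)} \to 0$ of cones of the maps $(\psi_\bullet,{\ev_\infty}_\bullet)$ of Remark \ref{rem:phievinfty}; the analogous sequence of cones of the relative change-of-group maps $\psi^{\mathrm{rel}}_\bullet\colon C^*_\bullet(\widetilde{Y}\subset\widetilde{Y}_\infty)^\Lambda \to C^*_\bullet(\widetilde{\overline{X}}\subset\widetilde{X})^\Gamma$ introduced after Proposition \ref{prop:excprep}; and the sequence $0 \to C_{\beta_{L,0}} \to C_{\beta_L} \to C_{\beta} \to 0$. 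Each of these is exact because the mapping cone functor turns a morphism of short exact sequences into a short exact sequence (the nine lemma for $C^\ast$-algebras), applied to the evaluation-at-$1$ morphisms; here one uses the exactness of $0 \to C^*_{L,0} \to C^*_{L} \to C^* \to 0$ not only for $C^*_\bullet(\widetilde{X})^{\Gamma,\reals_+,\Lambda}$ (stated in Section 4) but also for the end-tailored algebras $C^*_\bullet(\widetilde{Y}_\infty)^{\Lambda,\reals_+}$ and for the relative Roe/localisation/structure algebras, all of which is proved by the arguments of Sections 3 and 4.

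The three linking morphisms would be the following. The suspension of the top sequence maps to the sequence of cones of $(\psi_\bullet,{\ev_\infty}_\bullet)$ via the canonical inclusions $SC^*_\bullet(\widetilde{X})^{\Gamma,\reals_+,\Lambda}\hookrightarrow C_{(\psi_\bullet,{\ev_\infty}_\bullet)}$ coming from the mapping cone sequences $0\to SB\to C_f\to A\to 0$, which on $K$-theory induce, up to Bott periodicity, the first maps $K_*(C^*_\bullet(\widetilde{X})^{\Gamma,\reals_+,\Lambda})\to K_*(SC_{(\psi_\bullet,{\ev_\infty}_\bullet)})$ of the stated composites. The sequence of cones of $\psi^{\mathrm{rel}}_\bullet$ includes into the sequence of cones of $(\psi_\bullet,{\ev_\infty}_\bullet)$, and this inclusion is a degreewise $K$-equivalence by the Proposition stating that $C_{\psi^{\mathrm{rel}}_\bullet}\to C_{(\psi_\bullet,{\ev_\infty}_\bullet)}$ induces $K$-theory isomorphisms. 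Finally the sequence of cones of $\beta_\bullet$ maps to the sequence of cones of $\psi^{\mathrm{rel}}_\bullet$ through the maps induced by the inclusions $\widetilde{Y}\hookrightarrow\widetilde{Y}_\infty$ and $\widetilde{\overline{X}}\hookrightarrow\widetilde{X}$, which is a degreewise $K$-equivalence by Remark \ref{rem:relabs} (that is, by Propositions \ref{prop:relabs} and \ref{prop:relabsl}). All three are morphisms of short exact sequences of $C^\ast$-algebras, compatible with $\ev_1$ and with the inclusions of structure into localisation algebras by functoriality of the mapping cone and of suspension. Applying the $K$-theory functor, using naturality of the long exact sequence, and inverting the two degreewise $K$-equivalences, I obtain the ladder; by inspection its vertical maps are the composites
$$K_*(C^*_\bullet(\widetilde{X})^{\Gamma,\reals_+,\Lambda}) \to K_*(SC_{(\psi_\bullet,{\ev_\infty}_\bullet)}) \cong K_*(SC_{\psi^{\mathrm{rel}}_\bullet}) \cong K_*(SC_{\beta_\bullet}).$$

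I expect the main difficulty to be organisational rather than conceptual: one must verify that the three mapping-cone sequences really are short exact (hence the reliance on the nine lemma and on the exactness of the underlying $\ev_1$-sequences for the end-tailored and relative algebras, which is not written out explicitly in the text but is proved exactly as for the algebras in Sections 3 and 4), and one must keep track of the suspensions and Bott isomorphisms carefully enough to see that every square of the assembled diagram commutes, connecting maps included. Beyond these routine verifications there is no new analytic input; the statement is a formal consequence of naturality together with the $K$-equivalences already established.
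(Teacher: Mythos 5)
Your proposal is correct and follows essentially the same route as the paper: the paper also obtains the ladder by composing the diagram coming from naturality of the mapping cone exact sequence applied to the $\ev_1$-morphism of short exact sequences (giving the maps into $K_*(SC_{(\psi_\bullet,{\ev_\infty}_\bullet)})$) with the diagram of isomorphisms $K_*(SC_{(\psi_\bullet,{\ev_\infty}_\bullet)}) \cong K_*(SC_{C^*_\bullet(\widetilde{Y}\subset\widetilde{Y}_\infty)^\Lambda\to C^*_\bullet(\widetilde{\overline{X}}\subset\widetilde{X})^\Gamma}) \cong K_*(SC_{\beta_\bullet})$ established in the preceding proposition and Remark \ref{rem:relabs}. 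Your write-up merely makes explicit the exactness verifications for the intermediate mapping-cone sequences that the paper leaves implicit.
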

\begin{proof}
The diagram in the claim of the proposition is obtained by composing the diagrams
$$\begin{tikzcd}
   K_*(C^*_{L,0}(\widetilde{X})^{\Gamma,\reals_+,\Lambda}) \arrow{r} \arrow{d} & K_*(C^*_{L}(\widetilde{X})^{\Gamma,\reals_+,\Lambda}) \arrow{r} \arrow{d} & K_*(C^*(\widetilde{X})^{\Gamma,\reals_+,\Lambda}) \arrow{d}\\
   K_*(SC_{(\psi_{L,0},{\ev_\infty}_{L,0})}) \arrow{r} & K_*(SC_{(\psi_{L},{\ev_\infty}_{L})}) \arrow{r} & K_*(SC_{(\psi,{\ev_\infty})}),
\end{tikzcd}$$
and
$$\begin{tikzcd}
   K_*(SC_{(\psi_{L,0},{\ev_\infty}_{L,0})}) \arrow{r} & K_*(SC_{(\psi_{L},{\ev_\infty}_{L})}) \arrow{r} & K_*(SC_{(\psi,{\ev_\infty})}) \\
   K_*(SC_{\beta_{L,0}}) \arrow{r} \arrow{u}{\cong} & K_*(SC_{\beta_L}) \arrow{r} \arrow{u}{\cong} & K_*(SC_{\beta}) \arrow{u}{\cong},
\end{tikzcd}$$
where $(\psi_{(L,(0))},{\ev_\infty}_{(L,(0))})$ denotes the map $C_{(L,(0))}^*(\widetilde{Y_\infty})^{\Lambda,\mathbb{R}_+} \rightarrow C_{(L,(0))}^*(\widetilde{X})^{\Gamma,\mathbb{R}_+,\Lambda}$ introduced in Remark \ref{rem:phievinfty}. The commutativity of the first diagram is due to the naturality of the mapping cone exact sequence and the commutativity of the second diagram is clear.
\end{proof}
Denote the image of the fundamental class of the Dirac operator on $\widetilde{X}$ under the composition
$$K_*(C^*_L(\widetilde{X})^\Gamma) \rightarrow K_*(SC_{C_L^*(\widetilde{Y_\infty})^{\Lambda} \rightarrow C_L^*(\widetilde{X})^{\Gamma}}) \cong K_*(SC_{C_L^*(\widetilde{Y} \subset \widetilde{Y_\infty})^{\Lambda} \rightarrow C_L^*(\widetilde{\overline{X}} \subset \widetilde{X})^{\Gamma}})$$ by $[D_{\widetilde{\overline{X}},\widetilde{Y}}]$.
\begin{lemma}
The class $[D_{\widetilde{X},\widetilde{Y}}]$ maps to $[D_{\widetilde{\overline{X}},\widetilde{Y}}]$ under the map $K_*(C^*_{L}(\widetilde{X})^{\Gamma,\reals_+,\Lambda}) \rightarrow K_*(SC_{\psi_L})$ of Proposition \ref{prop:maptoCWY}.
\label{lemma:comparefundclasses}
\end{lemma}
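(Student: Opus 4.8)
The plan is to recognise the map of Proposition~\ref{prop:maptoCWY} on the middle term as the composite of the projection
$$p\colon C^*_{L}(\widetilde{X};\Cl_n)^{\Gamma,\reals_+,\Lambda}\longrightarrow C^*_{L}(\widetilde{X};\Cl_n)^{\Gamma},\qquad (S,T)\longmapsto S,$$
of the pullback onto its first factor (Remark~\ref{rem:pullback}) with the map $K_*(C^*_{L}(\widetilde{X})^{\Gamma})\to K_*(SC_{\psi_{L}})$ used to define $[D_{\widetilde{\overline{X}},\widetilde{Y}}]$, and then to observe that $p$ carries the representing morphism $\varphi_{\slashed{D}_{\widetilde{X},\widetilde{Y}}}$ of the $(\Gamma,\Lambda)$-fundamental class to the representing morphism $\varphi_{\slashed{D}_{\widetilde{X}}}$ of $[\slashed{D}_{\widetilde{X}}]$.

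First I would record the commutative square of $C^*$-algebras
$$\begin{tikzcd}
C^*_{L}(\widetilde{Y}_\infty;\Cl_n)^{\Lambda,\reals_+} \arrow{r}{(\psi_{L},{\ev_\infty}_{L})} \arrow[hook]{d} & C^*_{L}(\widetilde{X};\Cl_n)^{\Gamma,\reals_+,\Lambda} \arrow{d}{p}\\
C^*_{L}(\widetilde{Y}_\infty;\Cl_n)^{\Lambda} \arrow{r}{\psi_{L}} & C^*_{L}(\widetilde{X};\Cl_n)^{\Gamma},
\end{tikzcd}$$
in which the left vertical arrow is the inclusion of the asymptotically $\reals_+$-invariant operators, the top arrow is the map of Remark~\ref{rem:phievinfty}, and the bottom arrow is the change-of-group map appearing in the definition of $[D_{\widetilde{\overline{X}},\widetilde{Y}}]$; commutativity is immediate from $(\psi_{L},{\ev_\infty}_{L})(T)=(\psi_{L}(T),{\ev_\infty}_{L}(T))$, and this square is in fact already implicit in the first diagram in the proof of Proposition~\ref{prop:maptoCWY}. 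Applying the mapping-cone construction to this square and using the naturality of the extension $0\to SB\to C_f\to A\to 0$ and of the associated maps $K_*(B)\to K_*(SC_f)$ recalled in the remark before Remark~\ref{rem:relabs}, I would obtain a commutative square
$$\begin{tikzcd}
K_*(C^*_{L}(\widetilde{X})^{\Gamma,\reals_+,\Lambda}) \arrow{r} \arrow{d}{p_*} & K_*(SC_{(\psi_{L},{\ev_\infty}_{L})}) \arrow{d}\\
K_*(C^*_{L}(\widetilde{X})^{\Gamma}) \arrow{r} & K_*(SC_{\psi_{L}}),
\end{tikzcd}$$
whose horizontal arrows are exactly the maps appearing in Proposition~\ref{prop:maptoCWY} and in the definition of $[D_{\widetilde{\overline{X}},\widetilde{Y}}]$, respectively.

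Next I would trace the fundamental class around this square. Since $[D_{\widetilde{X},\widetilde{Y}}]$ is represented by $\varphi_{\slashed{D}_{\widetilde{X},\widetilde{Y}}}\colon f\mapsto\bigl(t\mapsto(f(\tfrac1t\slashed{D}_{\widetilde{X}}),\,f(\tfrac1t\slashed{D}_{\widetilde{Y}\times\reals}))\bigr)$, applying $p$ simply forgets the cylinder component and returns $\varphi_{\slashed{D}_{\widetilde{X}}}$; hence $p_*[D_{\widetilde{X},\widetilde{Y}}]=[\slashed{D}_{\widetilde{X}}]$. Because $[D_{\widetilde{\overline{X}},\widetilde{Y}}]$ is by definition the image of $[\slashed{D}_{\widetilde{X}}]$ under the bottom horizontal map, post-composed with the relative-absorption identifications of Remark~\ref{rem:relabs}, the square above reduces the lemma to the statement that the right vertical map, post-composed with those same identifications, coincides with the chain of isomorphisms of Proposition~\ref{prop:maptoCWY}.

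This compatibility of the several $K$-theory identifications is the one step I expect to require genuine care; the rest is an unwinding of definitions. It should follow from naturality: both $K_*(SC_{(\psi_{L},{\ev_\infty}_{L})})$ and $K_*(SC_{\psi_{L}})$ receive isomorphisms from $K_*(SC_{C^*_{L}(\widetilde{Y}\subset\widetilde{Y}_\infty)^{\Lambda}\to C^*_{L}(\widetilde{\overline{X}}\subset\widetilde{X})^{\Gamma})}$ --- the first induced by the mapping-cone inclusion of the proposition preceding Proposition~\ref{prop:maptoCWY}, the second by the relative-absorption inclusions of Propositions~\ref{prop:relabs} and~\ref{prop:relabsl} --- and these two inclusions of mapping cones, together with $p$ and the inclusion $C^*_{L}(\widetilde{Y}\subset\widetilde{Y}_\infty)^{\Lambda}\hookrightarrow C^*_{L}(\widetilde{Y}_\infty)^{\Lambda}$, fit into a commuting diagram of $C^*$-algebras by the commutativity established in Proposition~\ref{prop:excprep} and the identity $p(S,0)=S$. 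Consequently the identifications used in Proposition~\ref{prop:maptoCWY} and in the definition of $[D_{\widetilde{\overline{X}},\widetilde{Y}}]$ match up under the right vertical map of the square, which completes the argument. The $C^*_{L,0}$-version needed later is proved verbatim.
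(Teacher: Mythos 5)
Your proposal is correct and follows essentially the same route as the paper: project the pullback onto its first factor (followed by the inclusion into $C^*_L(\widetilde{X})^\Gamma$), use naturality of the mapping-cone construction to get the commuting square of $K$-theory maps, check compatibility of the two identifications with $K_*(SC_{C_L^*(\widetilde{Y}\subset\widetilde{Y}_\infty)^{\Lambda}\to C_L^*(\widetilde{\overline{X}}\subset\widetilde{X})^{\Gamma}})$ via the commuting triangle, and conclude from the fact that $\varphi_{\slashed{D}_{\widetilde{X},\widetilde{Y}}}$ forgets to $\varphi_{\slashed{D}_{\widetilde{X}}}$. The paper's proof is exactly this sequence of three commutative diagrams.
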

\begin{proof}
We first note that the commutativity of the diagram
$$\begin{tikzcd}
   K_*(C^*_L(\widetilde{Y_\infty})^{\Lambda,\reals_+}) \arrow{r} \arrow{d} & K_*(C^*_L(\widetilde{X})^{\Gamma,\reals_+,\Lambda}) \arrow{d}\\
   K_*(C^*_L(\widetilde{Y_\infty})^\Lambda) \arrow{r} & K_*(C^*_L(\widetilde{X})^\Gamma),
\end{tikzcd}$$
where the second vertical map is given by the composition of the projection onto the $C^*_L(\widetilde{X})^{\Gamma,\reals_+}$ component followed by the inclusion, implies that of 
$$\begin{tikzcd}
   K_*(C^*_L(\widetilde{X})^{\Gamma,\reals_+,\Lambda}) \arrow{r} \arrow{d} & K_*(SC_{C_L^*(\widetilde{Y_\infty})^{\Lambda,\mathbb{R}_+} \rightarrow C_L^*(\widetilde{X})^{\Gamma,\mathbb{R}_+,\Lambda}}) \arrow{d}\\
   K_*(C^*_L(\widetilde{X})^\Gamma) \arrow{r} & K_*(SC_{C_L^*(\widetilde{Y_\infty})^\Lambda \rightarrow C_L^*(\widetilde{X})^\Gamma}).
\end{tikzcd}$$
Furthermore, the diagram
$$\begin{tikzcd}
K_*(SC_{C_L^*(\widetilde{Y} \subset \widetilde{Y_\infty})^{\Lambda} \rightarrow C_L^*(\widetilde{\overline{X}} \subset \widetilde{X})^{\Gamma}}) \arrow{rd} \arrow{r} & K_*(SC_{C_L^*(\widetilde{Y_\infty})^{\Lambda,\mathbb{R}_+} \rightarrow C_L^*(\widetilde{X})^{\Gamma,\mathbb{R}_+,\Lambda}}) \arrow{d}\\
   & K_*(SC_{C_L^*(\widetilde{Y_\infty})^\Lambda \rightarrow C_L^*(\widetilde{X})^\Gamma}),
\end{tikzcd}$$
where all the arrows are isomorphisms, is commutative. The claim then follows from the commutativity of the latter two diagrams and the fact that $[D_{\widetilde{X},\widetilde{Y}}]$ lifts the fundamental class of $\widetilde{X}$
\end{proof}
\begin{corollary}
The $(\Gamma,\Lambda)$-index of the Dirac operator associated to $(X,Y,\iota)$ maps to the relative index of the Dirac operator on $\overline{X}$ under $K_*(C^*(\widetilde{X})^{\Gamma,\reals_+,\Lambda}) \rightarrow K_*(SC_{\psi})$ defined in Proposition \ref{prop:maptoCWY}.
\end{corollary}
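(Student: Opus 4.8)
The plan is to reduce the statement to Proposition~\ref{prop:maptoCWY} and Lemma~\ref{lemma:comparefundclasses} by unwinding the two indices involved. Recall first that, since $\overline{X}$ is compact, the change of group map $\beta$ is exactly the map $\psi$ attached to the pair $(\overline{X},Y)$ in Section~\ref{relind}, so that $K_*(SC_\beta)\cong K_*(C^*(\Gamma,\Lambda))$ and the target of the right-hand vertical map of Proposition~\ref{prop:maptoCWY} coincides with the target of $\mu^{\Gamma,\Lambda}$ for $(\overline{X},Y)$; in particular the relative index of the Dirac operator on $\overline{X}$ lives there.

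\emph{First step.} The $(\Gamma,\Lambda)$-index of the Dirac operator associated to $(X,Y,\iota)$ is by definition $(\ev_1)_*[\slashed{D}_{\widetilde{X},\widetilde{Y}}]\in K_*(C^*(\widetilde{X})^{\Gamma,\reals_+,\Lambda})$. I would invoke the commutativity of the right-hand square of the ladder in Proposition~\ref{prop:maptoCWY}: the image of $(\ev_1)_*[\slashed{D}_{\widetilde{X},\widetilde{Y}}]$ under $K_*(C^*(\widetilde{X})^{\Gamma,\reals_+,\Lambda})\to K_*(SC_\beta)$ equals $(\ev_1)_*$ applied to the image of $[\slashed{D}_{\widetilde{X},\widetilde{Y}}]$ under the middle vertical map $K_*(C^*_L(\widetilde{X})^{\Gamma,\reals_+,\Lambda})\to K_*(SC_{\beta_L})$. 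By Lemma~\ref{lemma:comparefundclasses} this image is $[D_{\widetilde{\overline{X}},\widetilde{Y}}]$. Hence the image of the $(\Gamma,\Lambda)$-index under the map of Proposition~\ref{prop:maptoCWY} is $(\ev_1)_*[D_{\widetilde{\overline{X}},\widetilde{Y}}]\in K_*(SC_\beta)$.

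\emph{Second step.} I would then identify $(\ev_1)_*[D_{\widetilde{\overline{X}},\widetilde{Y}}]$ with the relative index of the Dirac operator on $\overline{X}$. Here one uses that $\overline{X}_\infty=\overline{X}\cup_Y(Y\times[0,\infty))=X$ as spaces with cylindrical end, so that by definition the relative index is the image of $[\slashed{D}_X]\in K_*(X)$ under
$$K_*(X)\to K_*(X,Y_\infty)\xrightarrow{\cong}K_*(\overline{X},Y)\xrightarrow{\Ind_L^{\rel}}K_*(SC_{\beta_L})\xrightarrow{(\ev_1)_*}K_*(SC_\beta).$$
Under the isomorphism $K_*(X)\cong K_*(C^*_L(\widetilde{X})^\Gamma)$ of Proposition~\ref{prop:locind}, $[\slashed{D}_X]$ is the fundamental class of the Dirac operator on $\widetilde{X}$ (Section~\ref{locind}). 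So the point is to check that the composite $K_*(X)\to K_*(X,Y_\infty)\xrightarrow{\cong}K_*(\overline{X},Y)\xrightarrow{\Ind_L^{\rel}}K_*(SC_{\beta_L})$ agrees, under these identifications, with the map $K_*(C^*_L(\widetilde{X})^\Gamma)\to K_*(SC_{\beta_L})$ used to define $[D_{\widetilde{\overline{X}},\widetilde{Y}}]$; this follows from the construction of $\Ind_L^{\rel}$ as the relative analogue of $\Ind_L$ via mapping cones, together with the naturality of the mapping-cone long exact sequence and the identifications of Remark~\ref{rem:relabs}. Thus $\Ind_L^{\rel}$ sends the relative class of the Dirac operator on $\overline{X}$ to $[D_{\widetilde{\overline{X}},\widetilde{Y}}]$, and applying $(\ev_1)_*$ shows that the relative index equals $(\ev_1)_*[D_{\widetilde{\overline{X}},\widetilde{Y}}]$. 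Comparing with the first step finishes the proof.

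The main obstacle is the second step: verifying that $\Ind_L^{\rel}$ carries the relative fundamental class to $[D_{\widetilde{\overline{X}},\widetilde{Y}}]$. This is bookkeeping rather than genuine computation — one must track the excision isomorphism $K_*(X,Y_\infty)\cong K_*(\overline{X},Y)$, the identifications of Propositions~\ref{prop:relabs} and~\ref{prop:relabsl} between the Roe (respectively localisation) algebras ``relative to $\widetilde{X}$'' and the absolute ones, and the naturality of the mapping-cone sequence — but once this is in place the corollary follows formally from Proposition~\ref{prop:maptoCWY} and Lemma~\ref{lemma:comparefundclasses}, both of which have already been established.
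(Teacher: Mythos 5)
Your proposal is correct and matches the paper's (implicit) argument: the paper offers no written proof, treating the corollary as an immediate consequence of the commutativity of the right-hand square in Proposition \ref{prop:maptoCWY} together with Lemma \ref{lemma:comparefundclasses}, which is exactly your first step, while your second step just unwinds the fact that $[D_{\widetilde{\overline{X}},\widetilde{Y}}]$ is by construction $\Ind_L^{\rel}$ of the relative fundamental class, so that $(\ev_1)_*[D_{\widetilde{\overline{X}},\widetilde{Y}}]$ is the relative index by definition. The only cosmetic remark is that compactness of $\overline{X}$ is not needed, since the corollary's target is $K_*(SC_\psi)$ rather than $K_*(C^*(\Gamma,\Lambda))$.
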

Combining Lemma \ref{lemma:comparefundclasses} and Proposition \ref{prop:vanishing} gives a new (and very natural) proof of the following
\begin{proposition}
The nonvanishing of the relative index of the Dirac operator on a manifold with boundary is an obstruction to the existence of a positive scalar metric which is collared at the boundary.
\end{proposition}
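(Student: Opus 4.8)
The plan is to deduce this directly from Proposition~\ref{prop:vanishing} together with the Corollary following Lemma~\ref{lemma:comparefundclasses}, after re-expressing a compact spin manifold with boundary carrying a collared positive scalar curvature metric as the truncation $\overline{X}$ of a manifold $X$ with cylindrical end equipped with a positive scalar curvature metric that is collared on the end.

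So let $M$ be a compact spin manifold with boundary $N$, let $g$ be a metric on $M$ with positive scalar curvature which is collared at $N$, and let $f\colon(M,N)\to(B\Gamma,B\Lambda)$ be the reference map. First I would put $X\coloneqq M_\infty = M\cup_N (N\times[0,\infty))$ and extend $g$ over the half-cylinder by the product metric $g|_N + dt^2$. Because the scalar curvature of a Riemannian product $h + dt^2$ equals that of $h$, and $g$ already has product form with positive scalar curvature on the collar, the extended metric on $X$ has positive scalar curvature and makes $(X,N,\iota)$ a manifold with cylindrical end in the sense of Definition~\ref{def:cylend} which is collared on $Y_\infty$. Extending $f$ to $\eta\colon(X,Y_\infty)\to(B\Gamma,B\Lambda)$ constantly in the cylindrical direction puts us precisely in the setting where the $(\Gamma,\Lambda)$-fundamental class and the $(\Gamma,\Lambda)$-index of the Dirac operator on $X$ are defined, and here $\overline{X} = X\setminus\iota(N\times(0,\infty)) = M$ with induced map $f$.

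Next I would invoke Proposition~\ref{prop:vanishing}: since the metric on $X$ has positive scalar curvature, the $(\Gamma,\Lambda)$-index of the Dirac operator on $X$ vanishes in $K_*(C^*(\widetilde{X})^{\Gamma,\reals_+,\Lambda})$. By the Corollary following Lemma~\ref{lemma:comparefundclasses}, the rightmost vertical map of Proposition~\ref{prop:maptoCWY} sends this $(\Gamma,\Lambda)$-index to the relative index of the Dirac operator on $\overline{X} = M$; as $M$ is compact, the target of this map is $K_*(C^*(\Gamma,\Lambda))$ and the image class is exactly $\mu^{\Gamma,\Lambda}([\slashed{D}_{M_\infty}])$. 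Being the image of $0$, the relative index of $M$ vanishes, which is the contrapositive of the assertion.

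The one point that is not mere bookkeeping — and therefore the step I would be most careful with — is the first: checking that a positive scalar curvature metric on $M$ which is collared at $N$ genuinely extends to a positive scalar curvature metric on $M_\infty$ that is collared on the cylindrical end in the sense of Definition~\ref{def:cylend}. This rests on the elementary facts that attaching an isometric product half-cylinder over $(N, g|_N)$ preserves positivity of the scalar curvature and yields the required product structure, and that the resulting end satisfies $\lim_{R\to\infty}\dist(\iota(N\times[R,\infty)), X - Y_\infty) = \infty$. Everything after that is a diagram chase through Proposition~\ref{prop:maptoCWY} together with the already-established identification of the $(\Gamma,\Lambda)$-fundamental class with the relative fundamental class of Chang, Weinberger and Yu.
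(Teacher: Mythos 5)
Your proposal is correct and follows exactly the paper's intended argument: the paper obtains this proposition by "combining Lemma~\ref{lemma:comparefundclasses} and Proposition~\ref{prop:vanishing}," i.e.\ the $(\Gamma,\Lambda)$-index vanishes in the presence of positive scalar curvature and maps to the relative index under the comparison map of Proposition~\ref{prop:maptoCWY}. The only material you add is the (correct, elementary) verification that a collared psc metric on $M$ extends to a psc metric on $M_\infty$ collared on the cylindrical end, which the paper leaves implicit.
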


\subsection{Localised Indices and the Relative Index}
Given a metric $g$ on $X$ which has positive scalar curvature outside $\overline{X}$, one can define a localised coarse index in $K_n(C^*(\widetilde{\overline{X}})^\Gamma)$.
In \cite{SS18} it was shown that this index maps to the relative index of $\overline{X}$. We quickly recall the construction of the localised index and use the machinery developed previously to give a short proof of the latter statement.
\begin{definition}
Denote by $C^*_{L,\widetilde{\overline{X}}}(\widetilde{X})^\Gamma$ the preimage of $C^*(\widetilde{\overline{X}} \subset \widetilde{X})^\Gamma$ under $\ev_1 : C^*_L(\widetilde{X})^\Gamma \rightarrow C^*(\widetilde{X})^\Gamma$.
\end{definition}
Suppose that the scalar curvature of the metric restricted to the complement of $\overline{X}$ is bounded from below by $\epsilon > 0$. The following proposition is well-known. As in \cite{RZA} one can define a partial $\rho$-invariant $\rho^\Gamma_{\overline{X}}(g) \in K_n(C^*_{L,\widetilde{\overline{X}}}(\widetilde{X})^\Gamma)$ using the morphism 
$$\varphi_{D_{\widetilde{X}}} \circ \psi: \mathcal{S} \rightarrow C^*_{L,\widetilde{\overline{X}}}(\widetilde{X};\Cl_n)^\Gamma.$$
\begin{definition}
The localised coarse index $\ind_{\widetilde{\overline{X}}}^\Gamma(g)$ is the image of $\rho^\Gamma_{\overline{X}}(g)$ under $(\ev_1)_*: K_n(C^*_{L,\widetilde{\overline{X}}}(\widetilde{X})^\Gamma) \rightarrow K_n(C^*(\widetilde{\overline{X}} \subset \widetilde{X})^\Gamma)$.
\end{definition}
\begin{remark}
Note that in the above situation we can also define $\rho^{\Gamma,\Lambda}_{\overline{X}}(g)$. Furthermore, we note that the commutativity of the diagram
$$\begin{tikzcd}
K_*(C^*_{L,\widetilde{\overline{X}}}(\widetilde{X})^{\Gamma,\reals_+,\Lambda}) \arrow{rd} \arrow{r} & K_*(C^*_{L,\widetilde{\overline{X}}}(\widetilde{X})^\Gamma) \arrow{d}{{(\ev_1)}_*}\\
   & K_*(C^*(\widetilde{\overline{X}} \subset \widetilde{X})^\Gamma),
\end{tikzcd}$$ and the fact that $\rho^{\Gamma,\Lambda}_{\overline{X}}(g)$ is a lift of $\rho^{\Gamma}_{\overline{X}}(g)$ under the horizontal map imply that $\ind_{\widetilde{\overline{X}}}^\Gamma(g)$ is the image of $\rho^{\Gamma,\Lambda}_{\overline{X}}(g)$ under the map $K_*(C^*_{L,\widetilde{\overline{X}}}(\widetilde{X})^{\Gamma,\reals_+,\Lambda}) \rightarrow K_*(C^*(\widetilde{\overline{X}} \subset \widetilde{X})^\Gamma)$.
\end{remark}
The following lemma is a simple observation
\begin{lemma}
The following diagram is commutative
$$\begin{tikzcd}
   K_*(C^*_{L,\widetilde{\overline{X}}}(\widetilde{X})^{\Gamma,\reals_+,\Lambda})  \arrow{r} \arrow{d} & K_*(C^*(\widetilde{\overline{X}} \subset \widetilde{X})^\Gamma) \arrow{r} \arrow{d} & K_*(SC_{C^*(\widetilde{Y} \subset \widetilde{Y_\infty})^{\Lambda} \rightarrow C^*(\widetilde{\overline{X}} \subset \widetilde{X})^\Gamma}) \arrow{d}\\
   K_*(C^*_L(\widetilde{X})^{\Gamma,\reals_+,\Lambda}) \arrow{r} & K_*(C^*(\widetilde{X})^{\Gamma,\reals_+,\Lambda}) \arrow{r} & K_*(SC_{C^*(\widetilde{Y_\infty})^{\Lambda,\mathbb{R}_+} \rightarrow C^*(\widetilde{X})^{\Gamma,\mathbb{R}_+,\Lambda}}).
\end{tikzcd}$$
\end{lemma}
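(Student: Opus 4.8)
The plan is to display the asserted square as the horizontal composite of two squares and to check that each of them is the image under $K_*$ of a diagram of $C^*$-algebra homomorphisms that already commutes --- for the right-hand square this diagram of $C^*$-algebras being replaced by the associated diagram of mapping cones. Granting this, commutativity of the whole diagram follows formally from functoriality of $K$-theory together with naturality of the mapping cone construction.

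\emph{The left square.} First I would unwind the definitions. By construction $C^*_{L,\widetilde{\overline{X}}}(\widetilde{X})^{\Gamma,\reals_+,\Lambda}$ is the preimage of $C^*(\widetilde{\overline{X}}\subset\widetilde{X})^{\Gamma,\reals_+,\Lambda}$ under $\ev_1\colon C^*_L(\widetilde{X})^{\Gamma,\reals_+,\Lambda}\to C^*(\widetilde{X})^{\Gamma,\reals_+,\Lambda}$; and because $\overline{X}$ is disjoint from $\iota(Y\times\{1\})$, the algebra $C^*(\widetilde{\overline{X}}\subset\widetilde{X})^{\Gamma,\reals_+,\Lambda}$ is exactly the set of pairs $(S,0)$ with $S\in C^*(\widetilde{\overline{X}}\subset\widetilde{X})^\Gamma$, so projection onto the first coordinate identifies it with $C^*(\widetilde{\overline{X}}\subset\widetilde{X})^\Gamma$. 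Under this identification $\ev_1$ restricts to a homomorphism $C^*_{L,\widetilde{\overline{X}}}(\widetilde{X})^{\Gamma,\reals_+,\Lambda}\to C^*(\widetilde{\overline{X}}\subset\widetilde{X})^\Gamma$, and the square formed by these two copies of $\ev_1$, the inclusion of localisation algebras and the injection $S\mapsto(S,0)$ of Remark \ref{rem:phievinfty} manifestly commutes as a diagram of $C^*$-algebras. Applying $K_*$ produces the left square of the lemma.

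\emph{The right square.} Here I would invoke the left square of the commutative diagram of Proposition \ref{prop:excprep}: it is precisely a morphism from the ideal inclusion $C^*(\widetilde{Y}\subset\widetilde{Y_\infty})^\Lambda\hookrightarrow C^*(\widetilde{Y_\infty})^{\Lambda,\reals_+}$ to the ideal inclusion $C^*(\widetilde{\overline{X}}\subset\widetilde{X})^\Gamma\hookrightarrow C^*(\widetilde{X})^{\Gamma,\reals_+,\Lambda}$, realised by the homomorphisms $\psi$ and $(\psi,\ev_\infty)$; its commutativity is exactly the content of that proposition (concretely, an operator supported near $\widetilde{Y}$ is annihilated by $\ev_\infty$, so $(\psi,\ev_\infty)$ sends it to $(\psi(T),0)$). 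Passing to mapping cones turns this into a morphism between the mapping cone short exact sequence $0\to S\,C^*(\widetilde{\overline{X}}\subset\widetilde{X})^\Gamma\to C_\psi\to C^*(\widetilde{Y}\subset\widetilde{Y_\infty})^\Lambda\to 0$ of $\psi$ and its analogue for $(\psi,\ev_\infty)$; the right square of the lemma is then the naturality square for the homomorphisms $K_*(B)\to K_*(SC_f)$ induced by the inclusions $SB\hookrightarrow C_f$, with right-hand vertical arrow the map induced by the inclusion $C_\psi\hookrightarrow C_{(\psi,\ev_\infty)}$ appearing in the statement and middle vertical arrow the map induced by $S\mapsto(S,0)$.

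I do not anticipate a genuine obstacle --- the statement is, as advertised, a simple observation --- and the real content is only the bookkeeping: matching each arrow in the displayed diagram with an honest $C^*$-algebra homomorphism (or with a map induced on mapping cones by one), keeping track of the identification $C^*(\widetilde{\overline{X}}\subset\widetilde{X})^{\Gamma,\reals_+,\Lambda}\cong C^*(\widetilde{\overline{X}}\subset\widetilde{X})^\Gamma$, and recording that $\ev_\infty$ vanishes on the Roe algebra of $\widetilde{Y}$ relative to $\widetilde{Y_\infty}$. Once this is set up, both squares commute by functoriality and naturality, and the lemma follows.
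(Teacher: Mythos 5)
Your argument is correct and is exactly the bookkeeping the paper leaves implicit when it calls the lemma ``a simple observation'': the left square commutes already at the level of $C^*$-algebras via the identification $C^*(\widetilde{\overline{X}}\subset\widetilde{X})^{\Gamma,\reals_+,\Lambda}\cong C^*(\widetilde{\overline{X}}\subset\widetilde{X})^\Gamma$ (using that $\ev_\infty$ kills operators supported near $\widetilde{\overline{X}}$), and the right square is the naturality of $K_*(B)\to K_*(SC_f)$ applied to the left square of Proposition \ref{prop:excprep}. Nothing is missing.
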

Suppose $\overline{X}$ is compact. Then $K_*(C^*(\widetilde{\overline{X}} \subset \widetilde{X})^\Gamma) \cong K_*(C^*(\Gamma))$. Using the previous remark and lemma we obtain the following corollary, which was one of the main statements of \cite{SS18}.
\begin{corollary}
Suppose $\overline{X}$ is compact. Then $\ind_{\widetilde{\overline{X}}}^\Gamma(g)$ maps to the relative index of Chang, Weinberger and Yu under the map $K_*(C^*(\Gamma)) \rightarrow K_*(C^*(\Gamma,\Lambda))$.
\end{corollary}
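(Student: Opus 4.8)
The plan is to feed the partial rho-invariant $\rho^{\Gamma,\Lambda}_{\overline{X}}(g)\in K_n(C^*_{L,\widetilde{\overline{X}}}(\widetilde{X})^{\Gamma,\reals_+,\Lambda})$ into the commutative square of the lemma above and to match the two ways around it with the two sides of the asserted identity. Tracing it along the top row first: by the remark preceding this corollary the upper-left horizontal map carries $\rho^{\Gamma,\Lambda}_{\overline{X}}(g)$ to $\ind_{\widetilde{\overline{X}}}^\Gamma(g)$, and since $\overline{X}$ is compact we identify $K_*(C^*(\widetilde{\overline{X}}\subset\widetilde{X})^\Gamma)$ with $K_*(C^*(\Gamma))$ (Proposition \ref{prop:relabs} together with the cocompact case recalled earlier). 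Under this identification, under Remark \ref{rem:relabs}, and under the identification $K_*(SC_\beta)\cong K_*(C^*(\Gamma,\Lambda))$ valid when $\overline{X}$ is compact, the upper-right horizontal map becomes exactly the mapping-cone map $K_*(C^*(\Gamma))\to K_*(C^*(\Gamma,\Lambda))$ named in the statement; so the top path sends $\rho^{\Gamma,\Lambda}_{\overline{X}}(g)$ to the image of $\ind_{\widetilde{\overline{X}}}^\Gamma(g)$ under that map.

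Next I would trace $\rho^{\Gamma,\Lambda}_{\overline{X}}(g)$ down the left column and along the bottom row. The left vertical map is induced by the inclusion $C^*_{L,\widetilde{\overline{X}}}(\widetilde{X})^{\Gamma,\reals_+,\Lambda}\hookrightarrow C^*_L(\widetilde{X})^{\Gamma,\reals_+,\Lambda}$; since $\rho^{\Gamma,\Lambda}_{\overline{X}}(g)$ is represented by $\varphi_{\slashed{D}_{\widetilde{X},\widetilde{Y}}}\circ h$ with $h$ a homotopy inverse of $\mathcal{S}(-\tfrac{\sqrt{\epsilon}}{4},\tfrac{\sqrt{\epsilon}}{4})\hookrightarrow\mathcal{S}$, this inclusion sends it to the $(\Gamma,\Lambda)$-fundamental class $[\slashed{D}_{\widetilde{X},\widetilde{Y}}]$ (the partial rho-invariant lifts the fundamental class). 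The first bottom horizontal arrow is $(\ev_1)_*$, which by definition takes $[\slashed{D}_{\widetilde{X},\widetilde{Y}}]$ to the $(\Gamma,\Lambda)$-index, and the second bottom arrow pushes this into $K_*(SC_{C^*(\widetilde{Y_\infty})^{\Lambda,\reals_+}\to C^*(\widetilde{X})^{\Gamma,\reals_+,\Lambda}})$. By the corollary following Lemma \ref{lemma:comparefundclasses}, transporting this image along the isomorphisms $K_*(SC_{C^*(\widetilde{Y_\infty})^{\Lambda,\reals_+}\to C^*(\widetilde{X})^{\Gamma,\reals_+,\Lambda}})\cong K_*(SC_{C^*(\widetilde{Y}\subset\widetilde{Y_\infty})^\Lambda\to C^*(\widetilde{\overline{X}}\subset\widetilde{X})^\Gamma})\cong K_*(SC_\beta)\cong K_*(C^*(\Gamma,\Lambda))$ of Proposition \ref{prop:maptoCWY} yields precisely the relative index of the Dirac operator on $\overline{X}$.

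Finally, the commutativity of the lemma's square says the two paths agree once the top one is pushed into $K_*(SC_{C^*(\widetilde{Y_\infty})^{\Lambda,\reals_+}\to C^*(\widetilde{X})^{\Gamma,\reals_+,\Lambda}})$ by the right vertical map — which is exactly the $K$-theory isomorphism furnished by the proposition that the inclusion $C_{C^*(\widetilde{Y}\subset\widetilde{Y_\infty})^\Lambda\to C^*(\widetilde{\overline{X}}\subset\widetilde{X})^\Gamma}\to C_{(\psi,\ev_\infty)}$ induces isomorphisms on $K$-theory, i.e.\ the very isomorphism whose inverse appears in Proposition \ref{prop:maptoCWY}. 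Combining the two computations, the image of $\ind_{\widetilde{\overline{X}}}^\Gamma(g)$ under $K_*(C^*(\Gamma))\to K_*(C^*(\Gamma,\Lambda))$ equals the relative index, which is the claim. The proof is therefore purely a diagram chase given the lemma, the preceding remark, and the corollary to Lemma \ref{lemma:comparefundclasses}; the only step requiring real care is the bookkeeping of the chain of $K$-theory identifications — in particular checking that the identification $K_*(C^*(\widetilde{\overline{X}}\subset\widetilde{X})^\Gamma)\cong K_*(C^*(\Gamma))$ is compatible with $K_*(SC_\beta)\cong K_*(C^*(\Gamma,\Lambda))$ so that the upper-right edge of the lemma's square is literally the map in the statement, but this is the compatibility already built into the Chang–Weinberger–Yu construction recalled in Section \ref{relind}.
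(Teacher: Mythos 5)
Your proof is correct and is exactly the argument the paper intends: the paper's own justification is the one-line "using the previous remark and lemma," and your diagram chase (top path giving the image of $\ind_{\widetilde{\overline{X}}}^\Gamma(g)$ under $K_*(C^*(\Gamma))\to K_*(C^*(\Gamma,\Lambda))$, bottom path giving the relative index via the corollary to Lemma \ref{lemma:comparefundclasses}, matched by commutativity) is precisely the omitted detail. No issues.
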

\bibliography{references}
\end{document}